\documentclass[11pt]{article}
\usepackage[margin=0.85in]{geometry}
\usepackage[utf8]{inputenc}
\usepackage{amsmath,amsthm,amssymb,enumerate, graphicx,subcaption}
\usepackage{setspace}
\usepackage{thmtools}
\usepackage{thm-restate}
\usepackage{cleveref,color}
\usepackage{natbib}
\usepackage[mathscr]{euscript}
\usepackage[shortlabels]{enumitem}
\usepackage{prodint}

\newif\ifproof
\prooffalse

\declaretheorem[name=Theorem]{thm}

\declaretheorem[name=Lemma]{lemma}



\DeclareMathOperator*{\inprob}{\stackrel{P}{\longrightarrow}}
\DeclareMathOperator*{\inproblow}{\rightarrow_{P}}

\DeclareMathOperator*{\indist}{\stackrel{d}{\longrightarrow}}
\newcommand{\bounded}{O_{\mathrm{P}}}
\newcommand{\boundeddet}{O}
\newcommand{\fasterthan}{o_{\mathrm{P}}}
\newcommand{\fasterthandet}{o}

\DeclareMathOperator*{\argmin}{argmin}
\DeclareMathOperator*{\argmax}{argmax}

\newcommand{\s}[1]{\mathscr{#1}}
\renewcommand{\d}[1]{\mathbb{#1}}

\newcommand{\n}[1]{\mathrm{#1}}

\usepackage{natbib}

\allowdisplaybreaks

\title{A unified study of nonparametric inference\\ for monotone functions}
\author{Ted\ Westling\\ Center for Causal Inference \\ University of Pennsylvania\\ tgwest@pennmedicine.upenn.edu \and Marco Carone\\ Department of Biostatistics \\ University of Washington \\ mcarone@uw.edu\\}
\date{}
\begin{document}

\maketitle

\begin{abstract}
The problem of nonparametric inference on a monotone function has been extensively studied in many particular cases. Estimators considered have often been of so-called Grenander type, being representable as the left derivative of the greatest convex minorant or least concave majorant of an estimator of a primitive function. In this paper, we provide general conditions for consistency and pointwise convergence in distribution of a class of generalized Grenander-type estimators of a monotone function. This broad class allows the minorization or majoratization operation to be performed on a data-dependent transformation of the domain, possibly yielding benefits in practice. Additionally, we provide simpler conditions and more concrete distributional theory in the important case that the primitive estimator and data-dependent transformation function are asymptotically linear.  We use our general results in the context of various well-studied problems, and show that we readily recover classical results established separately in each case. More importantly, we show that our results allow us to tackle more challenging problems involving parameters for which the use of flexible learning strategies appears necessary. In particular, we study inference on monotone density and hazard functions using informatively right-censored data, extending the classical work on independent censoring, and on a covariate-marginalized conditional mean function, extending the classical work on monotone regression functions. In addition to a theoretical study, we present numerical evidence supporting our large-sample results.
\end{abstract}

\section{Introduction}

\doublespacing

\subsection{Background}

In many scientific settings, investigators are interested in learning about a function known to be monotone, either due to probabilistic constraints or in view of existing scientific knowledge. The statistical treatment of nonparametric monotone function estimation has a long and rich history. Early on, \cite{grenander1956theory} derived the nonparametric maximum likelihood estimator (NPMLE) of a monotone density function, now commonly referred to as the Grenander estimator. Since then, monotone estimators of many other parameters, including hazard and regression functions, have been proposed and studied.

In the literature, most monotone function estimators have been constructed via empirical risk minimization. Specifically, these are obtained by minimizing the empirical risk over the space of non-decreasing, or non-increasing, candidate functions  based on an appropriate loss function. The theoretical study of these estimators has often hinged strongly on their characterization as empirical risk minimizers. This is the case, for example, for the asymptotic theory developed by \cite{rao1969density} and \cite{rao1970hazard} for the NPMLE  of monotone density and hazard functions, respectively, and by  \cite{brunk1970regression} for the least-squares estimator of a monotone regression function. \cite{kim1990cube} unified the study of these various estimators by studying the \emph{argmin} process typically driving the pointwise distributional theory of  monotone empirical risk minimizers.

Many of the parameters treated in the literature on monotone function estimation can be viewed as an index of the statistical model, in the sense that the model space is in bijection with the product space corresponding to the parameter of interest and an additional variation-independent parameter. In such cases, identifying an appropriate loss function is often easy, and a risk minimization representation is therefore usually available. However, when the parameter of interest is a complex functional of the data-generating mechanism, an appropriate loss function may not be readily available. This occurs often, for example, when identification of the parameter of interest based on the observed data distribution requires adjustment for sampling complications (e.g., informative treatment attribution, missing data or loss to follow-up). It is thus imperative to develop and study estimation methods that do not rely upon risk minimization.

It is a simple fact that the primitive of a non-decreasing function is convex. This observation serves as motivation to consider as an estimator of the function of interest the derivative of the greatest convex minorant (GCM) of an estimator of its primitive function. In the literature on monotone function estimation, many estimators obtained as empirical risk minimizers can alternatively be represented as the left derivative of the GCM of some primitive estimator. This is because the definition of the GCM is intimately tied to the necessary and sufficient conditions for optimization of certain risk functionals over the convex cone of monotone functions (see, e.g., Chapter 2 of \citealp{groene2014shape}). In particular, Grenander's NPMLE of a monotone density equals the left derivative of the GCM of the empirical distribution function. In the recent literature, estimators obtained in this fashion have thus been referred to as being of \emph{Grenander-type}. \cite{leurgans1982} is an early example of a general study of Grenander-type estimators for a class of regression problems.

In a seminal paper, \cite{groeneboom1985density} introduced an approach to studying GCMs based on an inversion operation. This approach has facilitated the theoretical study of certain Grenander-type estimators without the need to utilize their representation as empirical risk minimizers. For example, under the assumption of independent right-censoring, \cite{huang1995right} used this approach to derive large-sample properties of a monotone hazard function estimator obtained by differentiating the GCM of the Nelson-Aalen estimator of the cumulative hazard function. This general strategy was also used by \cite{van2006estimating}, who derived and studied an estimator of a covariate-marginalized survival curve based on current-status data, including possibly high-dimensional and time-varying covariates. More recently, there has been interest in deriving general results for Grenander-type estimators applicable to a variety of cases. For instance, \cite{anevski2006general} derived pointwise distributional limit results for Grenander-type estimators in a very general setting including, in particular, dependent data. \cite{durot2007}, \cite{durot2012} and \cite{lopuhaa2016hellinger} derived limit results for the estimation error of Grenander-type estimators under $L_p$, supremum and Hellinger norms, respectively. \cite{durot2013testing} studied the problem of testing the equality of generic monotone functions with $K$ independent samples. \cite{durot2014kiefer}, \cite{beare2017weak} and \cite{lopuhaa2018naive} studied properties of the least concave majorant of an arbitrary estimator of the primitive function of a monotone parameter. The monograph of \cite{groene2014shape} also summarizes certain large-sample properties for these estimators.

\subsection{Contribution and organization of the article}

In this paper, we wish to address the following three key objectives:
\begin{enumerate}
\item to provide a unified framework for studying a large class of nonparametric monotone function estimators that implies classical results but also applies in more complicated, modern applications;\vspace{-.075in}
\item to derive tractable sufficient conditions under which estimators in this class are known to be consistent and have a non-degenerate limit distribution upon proper centering and scaling;\vspace{-.075in}
\item to illustrate the use of this general framework to construct targeted estimators of monotone parameters that are possibly complex summaries of the observed data distribution, and whose estimation may require the use of data-adaptive estimators of nuisance functions.
\end{enumerate}

Our first goal is to introduce a class of monotone estimators that allow the greatest convex minorization process to be performed on a possibly data-dependent transformation of the domain. For many monotone estimators in the literature, the greatest convex minorization is performed on a transformation of the domain. A strategic domain transformation can lead to significant benefits in practice, including in some cases the elimination of the need to estimate challenging nuisance parameters. Unfortunately, to our knowledge, existing results for general Grenander-type estimators do not apply in a straightforward manner in cases in which a data-dependent transformation of the domain has been used. We will define a class that permits such transformations, and demonstrate both how this class encompasses many existing estimators in the literature and how a transformation can be strategically selected in novel problems.

Our second goal is to derive sufficient conditions on the estimator of the primitive function and domain transformation that imply consistency and pointwise convergence in distribution of the monotone function estimator. As noted above, general results on pointwise convergence in distribution for the class of Grenander-type estimators, applicable in a wide variety of settings, were provided in \cite{anevski2006general}. Our work differs from that of \cite{anevski2006general} in a few important ways. First, the role and implications of domain transformations -- which, as we show, are often important in practice -- were not explicitly considered in \cite{anevski2006general}. To our knowledge, the class of generalized Grenander-type estimators we consider in this paper, which allow for domain transformations, has not previously been studied in a unified manner, and hence, general results for this class do not currently exist.  Second, in addition to pointwise distributional results, we study weak consistency. Third, in Sections~\ref{sec:improved},~\ref{sec:examples} and~\ref{sec:sims}, we pay special attention to parameters for which asymptotically linear estimators of the primitive and transformation functions can be constructed -- in such cases, relatively straightforward sufficient conditions can be developed, and the limit distribution has a simpler form. While these results are weaker than those in Section~\ref{sec:general} and in \cite{anevski2006general} because they apply only to a special case, they are useful in many settings. We demonstrate the utility of these results for three groups of examples -- estimation of monotone density, hazard and regression functions -- and show that our results coincide with established results in these settings.

Our third goal is to discuss and illustrate Grenander-type estimation in cases in which nonparametric estimation of the  primitive function requires estimation of challenging nuisance parameters. In this sense, our work follows the lead of \cite{van2006estimating}, whose setting is of this type. More generally, such primitive functions arise frequently, for example, when the observed data unit represents a coarsened version of an ideal data structure, and the coarsening occurs randomly conditional on observed covariates \citep{heitjan1991car}. In our general results, we provide sufficient conditions that can be readily applied to such primitive estimators. To demonstrate the application of our theory in coarsened data structures, we consider extensions of the three classical monotone problems  above to more complex settings in which covariates must be accounted for, because either the censoring process or the treatment allocation mechanism are informative, as is typical in observational studies. Specifically, we derive novel estimators of monotone density and hazard functions for use when the survival data are subject to right-censoring that may depend on covariates, and a novel estimator of a monotone dose-response curve for use when the relationship between the exposure and outcome is confounded by recorded covariates. Unlike for their classical analogues, in these more difficult problems, nonparametric estimation of the primitive function involves nuisance functions for which flexible estimation strategies (e.g., machine learning) must be employed. As \cite{van2006estimating} was able to achieve in a particular problem, our general framework explicitly allows the integration of such strategies while still yielding estimators with a tractable limit theory. 
 


Our paper is organized as follows. In Section \ref{sec:grenander}, we define the class of estimators we consider and briefly introduce our three working examples. In Section~\ref{sec:general}, we present our most general results for the consistency and convergence in distribution of our class of estimators. We provide refined results, including simpler sufficient conditions and distributional results, for the special case in which the primitive and transformation estimators are asymptotically linear in Section \ref{sec:improved}. In Section \ref{sec:examples}, we apply our general theory in three examples, both for classical parameters and for the novel extensions we consider. In Section \ref{sec:sims}, we provide results from simulation studies that evaluate the validity of the theory in two examples. We  provide concluding remarks in Section \ref{sec:discussion}. The proofs of all theorems are provided in Supplementary Material. Additional technical details are found in Supplementary Material.

\section{Generalized Grenander-type estimators}\label{sec:grenander}

\subsection{Statistical setup and definitions}

Throughout, we make use of the following definitions. For intervals $I,J \subseteq R$, define $\ell^{\infty}(I)$ as the space of bounded, real-valued functions on $I$, $\mathscr{D}_I \subset \ell^{\infty}(I)$ as the subset of non-decreasing and c\`{a}dl\`{a}g (right-continuous with left-hand limits) functions on $I$, and $\mathscr{D}_{I,J} \subset \mathscr{D}_I$ as the further subset of functions whose range is contained in $J$. The GCM operator $\n{GCM}_{I} : \ell^{\infty}(I) \to \ell^{\infty}(I)$ is defined for any $G \in \ell^{\infty}(I)$ as the pointwise supremum over all convex functions $H \leq G$ on $I$. We note that $\n{GCM}_{I}(G)$ is necessarily convex. For $G \in \mathscr{D}_I$, we denote by $G^{-}$ the generalized inverse mapping $x\mapsto\inf\{u \in I: G(u) \geq x\}$, and for a left-differentiable $G$, we denote by $\partial_-G$ the left derivative of $G$.

We are interested in making inference about an unknown function $\theta_0\in \mathscr{D}_I$ determined by the true data-generating mechanism $P_0$ for an interval $I \subseteq \d{R}$. We denote the endpoints of $I$ by $a_I := \inf I$ and $b_I := \sup I$. We define the primitive function $\Theta_0$ of $\theta_0$ pointwise for each $x\in I$ as $\Theta_0(x):=\int_{a_I}^x \theta_0(u)du$, where if $a_I = -\infty$ we assume the integral exists. The general results we present in Section~\ref{sec:general} apply in contexts with either independent or dependent data. Starting in Section~\ref{sec:improved}, we focus on problems in which the data consist of independent observations $O_1,O_2, \dotsc, O_n$ from an unknown distribution $P_0$ contained in a nonparametric model $\mathscr{M}$. In such cases, we denote by $O$ a prototypical data unit, by $\mathscr{O}(P)$ the support of $O$ under $P\in\mathscr{M}$, and we set $\mathscr{O}:=\cup_{P\in\mathscr{M}}\mathscr{O}(P)$.

In its simplest formulation, a Grenander-type estimator of $\theta_0$ is given by $\partial_-\n{GCM}_I(\Theta_n)$ for some estimator $\Theta_n$ of $\Theta_0$. However, as a critical step in unifying classical estimators and constructing procedures with possibly improved properties, we wish to allow the GCM procedure to be performed on a possibly data-dependent transformation of the domain $I$. To do so, we first define for any interval $J \subseteq \d{R}$ the operator $\n{Iso}_{J}:\ell^{\infty}(J) \times \mathscr{D}_{I,J}\rightarrow \ell^\infty(I)$ as $\n{Iso}_{J}(\Psi, \Phi) := ( \partial_{-}\n{GCM}_{J}(\Psi)) \circ \Phi$ for each $\Psi\in\ell^\infty(I)$ and $\Phi\in\mathscr{D}_{I,J}$. We set $J_0 := [0,u_0]$, with $u_0\in(0,\infty)$ possibly depending on $P_0$, and suppose that a domain transform $\Phi_0\in \mathscr{D}_{I,J_0}$ is chosen. We may then consider the domain-transformed parameter $\psi_0:= \theta_0 \circ \Phi_0^{-}$, which has primitive $\Psi_0$ defined pointwise as $\Psi_0(t):= \int_0^t \psi_0(u)du $ for $t\in(0,u_0]$. As with $\theta_0$ and $\Theta_0$, $\psi_0$ is non-decreasing and $\Psi_0$ is convex. Thus, it must be true that $\n{Iso}_{J_0}(\Psi_0, \Phi_0)(x) = \theta_0(x)$ for each $x\in I$ at which $\theta_0$ is left-continuous and such that $\Phi_0(u) < \Phi_0(x)$ for all $u < x$. This observation motivates us to consider estimators of $\theta_0$ of the form $\n{Iso}_{J_n}(\Psi_n, \Phi_n)$, where $\Psi_n$, $\Phi_n$ and $u_n$ are estimators of $\Psi_0$, $\Phi_0$ and $u_0$, respectively, and we define $J_n := [0,u_n]$. We refer to any such estimator as being of the \emph{generalized Grenander-type}. This class, of course, contains the standard Grenander-type estimators: setting $\Psi_n = \Theta_n$ and $\Phi_n = \n{Id}$ for $\n{Id}$ the identity mapping yields $\theta_n = \partial_-\n{GCM}_I(\Theta_n)$. We note that, in this formulation,   we require the domain $J_0$ over which the GCM is performed to be bounded, but not so for the domain $I$ of $\theta_0$. Additionally, we assume that the left endpoint of $J_0$ is fixed at 0, while the upper endpoint $u_0$ may depend on $P_0$. However, this entails no loss in generality, since if the desired domain is instead $[\ell_0, u_0]$, where now $\ell_0$ also depends on $P_0$, we can define $\bar{u}_0 := u_0 - \ell_0$ and similarly shift $\Phi_0$ by $\ell_0$ to obtain the new domain $[0, \bar{u}_0]$.

Defining $\Gamma_0:=\Psi_0\circ  \Phi_0$, we suppose that we have at our disposal estimators $\Phi_n$ and $\Gamma_n$ of $\Phi_0$ and $\Gamma_0$, respectively, as well as a weakly consistent estimator $u_n$ of $u_0$. In this work, we study the properties of a generic generalized Grenander-type estimator $\theta_n$ of $\theta_0$ of the form 
\begin{equation} \theta_n :=  \n{Iso}_{J_n}(\Gamma_n \circ \Phi_n^{-}, \Phi_n)\ .\label{estimator}\end{equation}
Specifically, our goal is to provide sufficient conditions on the triple $(\Gamma_n, \Phi_n, u_n)$ under which $\theta_n$ is consistent, and under which a suitable standardization of $\theta_n$ converges in distribution to a nondegenerate limit. As stated above, our only requirement for $u_n$ is that it tend in probability to $u_0$. Therefore, our focus will be on the pair $(\Gamma_n, \Phi_n)$.

We note that estimators taking form $\eqref{estimator}$ constitute a more restrictive class than the set of all estimators of the form $\n{Iso}_{J_n}(\Psi_n, \Phi_n)$ for arbitrary $\Psi_n$. Our focus on this slightly less general form is motivated by two reasons.  First, as we will see in various examples, $\Gamma_0$ often has a simpler form than $\Psi_0$, and in such cases, it may be significantly easier to verify required regularity conditions for  $\Gamma_n$ and to derive limit distribution properties based on $\Gamma_n$ rather than $\Psi_n$. Second, many celebrated monotone estimators in the literature follow this particular form. This can be seen by noting that, if $\Phi_n$ is a right-continuous step function with jumps at points $x_1,x_2, \dotsc, x_m$, then for each $x \in I$ the estimator $\theta_n(x)$ given in \eqref{estimator} equals  the slope at $\Phi_n(x)$ of the greatest convex minorant of the diagram of points $\{(\Phi_n(x_j), \Gamma_n(x_j)) : j = 0, 1, \dotsc, m\}$, where $x_0 = a_I$. We highlight well-known examples of estimators of this type below. In brief, we sacrifice a little generality for a substantial gain in the ease of application of our results, both for well-known and novel monotone estimators. Nevertheless, conditions on the pair $(\Psi_n, \Phi_n)$ under which consistency and distributional results hold for $\theta_n$ can be derived similarly.

\subsection{Examples}

Before proceeding to our main results, we briefly discuss the several examples we will use to illustrate how our framework allows us to not only obtain results on classical estimators in the monotone estimation literature directly, but also tackle more complex problems for which no estimators are currently available. These examples will be studied extensively in Section \ref{sec:examples}.

\subsubsection*{Example 1: monotone density function}

Suppose that $T$ is a univariate positive random variable with non-decreasing density function $f_0$, and that $T$ is right-censored by an independent random censoring time $C$. The observed data unit is $O:= (Y, \Delta)$, where $Y:= \min(T, C)$ and $\Delta:= I(T\leq C)$, with distribution $P_0$ implied by the true marginal distributions of $T$ and $C$. The parameter of interest is $\theta_0:=f_0$, the density function of $T$ with support $I$. Taking $\Phi_0$ to be the identity function, we get that $\psi_0 = \theta_0$. Here, both $\Psi_0$ and $\Theta_0$ represent the distribution function $F_0$ of $T$, and $\Phi_0$ plays no role. A natural estimator $\theta_n$ of $\theta_0$ can be obtained by taking $\Psi_n$ to be the Kaplan-Meier estimator of the distribution function $\Psi_0$. With $\Phi_n$ the identity map, $\Gamma_n:=\Psi_n$ and $u_n := \max_i Y_i$, the estimator $\theta_n := \n{Iso}_{J_n}(\Gamma_n, \Phi_n)$ is precisely the estimator studied by \cite{huang1995right}. When $C = +\infty$ with probability one, $\Psi_n$ is the empirical distribution function based on $Y_1, Y_2, \dotsc, Y_n$, and $\theta_n$ is precisely the Grenander estimator, the NPMLE of $\theta_0$.

In Section~\ref{sec:examples}, we extend estimation of a monotone density function to the setting in which the data are subject to possibly informative right-censoring. Specifically, we only require $T$ and $C$ to be independent conditionally upon a vector $W$ of baseline covariates. We will study the estimator defined by differentiating the GCM of a one-step estimator of $\Psi_0$. In this context, estimation of $\Psi_0$ requires estimation of nuisance functions. We will use our  general results to provide conditions on the nuisance estimators that imply consistency and distributional results for $\theta_n$.

\subsubsection*{Example 2: monotone hazard function}

\noindent Suppose now that $T$ is a univariate positive random variable with non-decreasing hazard function $\lambda_0$.  In this example, we are interested in $\theta_0:=\lambda_0$. Setting $S_0:=1-F_0$ to be the survival function of $T$, we note that $\Gamma_0(u)= \int_0^u  f_0(v) / S_0(v)\Phi_0(dv)$, and so, taking $\Phi_0$ to satisfy $\Phi_0(dv) = S_0(v)dv$ makes $\Gamma_0=F_0$. The restricted mean lifetime function $\Phi_0(u) := \int_0^u S_0(v)dv$ satisfies this condition. Using this transformation, the estimator of the monotone hazard function $\theta_0$ only requires estimation of $F_0$.

In Section \ref{sec:examples}, we again extend estimation of a monotone hazard function to allow the data to be subject to possibly informative right-censoring using the same one-step estimator $\Gamma_n$ of $\Gamma_0=F_0$ that will be introduced in Example 1 and the data-dependent transformation $\Phi_n(u) := \int_0^u [1 - \Gamma_n(v)]dv$. We will show that, once the simpler details regarding the estimation of a monotone density are established,  the asymptotic properties of this estimator of a monotone hazard are obtained essentially for free.

\subsubsection*{Example 3: monotone regression function}

As our last example, we study estimation of a non-decreasing regression function. In the simplest setup, the data unit is $O:=(Y, A)$ and we are interested in $\theta_0: x\mapsto E_0 \left(Y \mid A = x\right)$. Assume without loss of generality that the data are sorted according to the observed values of $A$.  Taking $I$ to be the support of $A$ and $\Phi_0$ to be the marginal distribution function of $A$,  we have that $\psi_0(u)=E_0\left[Y \mid \Phi_0(A) = u\right]$ for each $u\in [0,1]$, and $\Gamma_0(x)=E_0\left[Y I_{(-\infty,x]}(A)\right]$ for each $x \in I$. Thus, $\Gamma_n(x) :=\frac{1}{n}\sum_{i=1}^n Y_i  I_{(-\infty,x]}(A_i)$ and $\Phi_n(x) := \tfrac{1}{n}\sum_{i=1}^n I_{(-\infty, x]}(A_i)$ are natural nonparametric estimators of $\Gamma_0(x)$ and $\Phi_0(x)$, respectively. Then, $\theta_n := \n{Iso}_{[0,1]}(\Gamma_n, \Phi_n)$ is the classical monotone least-squares estimator of $\theta_0$.

In Section \ref{sec:examples}, we consider an extension to estimation of a covariate-marginalized regression function, for use when the relationship  between exposure and outcome of interest is confounded. Specifically, we will consider the data unit $O:=(Y, A, W)$, with $W$ representing a vector of potential confounders, and focus on $\theta_0: x\mapsto E_0 \left[E_0\left(Y \mid A = x, W\right)\right]$. Under untestable causal identifiability conditions, $\theta_0(x)$ is the mean of the counterfactual outcome $Y(x)$ obtained by setting exposure at level $A = x$. This parameter plays a critical role in causal inference, particularly when the available data are obtained from an observational study and the exposure assignment process may be informative. As before, tackling this more complex parameter will require estimation of certain nuisance functions. 


\section{General results}\label{sec:general}

We begin with our first set of results on the large-sample properties of $\theta_n$. Our goal is to establish conditions under which consistency and pointwise convergence in distribution hold. First, we provide general results on the consistency  of $\theta_n$, both pointwise and uniformly. We note that the results of \cite{anevski2006general}, \cite{durot2007},  \cite{durot2012} and \cite{lopuhaa2016hellinger} imply conditions for consistency of Grenander-type estimators.  However, because the objective of their work is to establish distributional theory for a global discrepancy between the estimated and true monotone function, the conditions they require are stronger than needed for consistency alone. Also, their work is restricted to Grenander-type estimators, without data-dependent transformations of the domain.

Below, we refer to the sets $I_n := \{z\in I : z = \Phi_n^-(u), u \in J_n\}$ and $I_{n,\beta} := \{ x \in I : 0 \leq \Phi_0(x - \beta)\leq \Phi_0(x + \beta) \leq u_n\}$ for $\beta\geq0$.

\begin{thm}[Weak consistency]\label{thm:consistency}


\begin{enumerate}[(1),leftmargin=*]

\item Suppose $\theta_0$ is continuous at $x\in I$ and, for some $\delta>0$ such that $[x-\delta,x+\delta]\subset \Phi_0^{-1}(J_0)$, $\Phi_0$ is strictly increasing and continuous on $[x-\delta,x+\delta]$.  If $\|\Gamma_n - \Gamma_0\|_{\infty, I_n}$, $\| \Phi_n - \Phi_0\|_{\infty, I_n}$ and $\| \Phi_n - \Phi_0\|_{\infty, [x-\delta, x + \delta]}$ tend to zero in probability, then $\theta_n(x) \inprob \theta_0(x)$.

\item Suppose $\theta_0$ and $\Phi_0$ are uniformly continuous on $I$, and $\Phi_0$ is strictly increasing on $I$. If $\|\Gamma_n - \Gamma_0\|_{\infty, I_n}$ and $\| \Phi_n - \Phi_0\|_{\infty, I}$ tend to zero in probability, then $\| \theta_n - \theta_0\|_{\infty, I_{n,\beta}} \inprob 0$ for each fixed $\beta > 0$.
\end{enumerate}
\end{thm}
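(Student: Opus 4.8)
The plan is to reduce both claims to a deterministic statement about the continuity of the map $(\Gamma, \Phi) \mapsto \n{Iso}_{J}(\Gamma \circ \Phi^-, \Phi)$ at the true pair $(\Gamma_0, \Phi_0)$, and then invoke the continuous mapping theorem together with the assumed uniform consistency of $\Gamma_n$ and $\Phi_n$. The key observation is that $\n{Iso}_{J_n}(\Gamma_n \circ \Phi_n^-, \Phi_n)(x)$ is, by the characterization noted after \eqref{estimator}, a left-slope of a greatest convex minorant evaluated at $\Phi_n(x)$; so I would work on the transformed domain $J_0 = [0, u_0]$, where the relevant quantity is $\partial_- \n{GCM}_{J_0}(\Gamma_0 \circ \Phi_0^-)$ evaluated at $\Phi_0(x)$, and this equals $\theta_0(x)$ under the stated regularity (left-continuity of $\theta_0$ at $x$ and strict monotonicity of $\Phi_0$ near $x$, which makes $\Phi_0^-$ a genuine inverse there).

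For part (1), I would proceed via the standard sandwiching argument for GCM-based estimators. Fix $x$ and $\varepsilon > 0$. Using monotonicity of $\theta_n$, it suffices to bound $\theta_n(x)$ above and below by quantities that converge to $\theta_0(x \pm \eta)$ for small $\eta$, and then let $\eta \to 0$ using continuity of $\theta_0$ at $x$. The upper bound on $\theta_n(x)$ comes from the switching relation for the GCM: $\theta_n(x) \le c$ iff the minorant at the relevant point lies below a certain line, which translates (via the $\n{GCM} \le$ identity-function property) into a statement about $\Gamma_n \circ \Phi_n^-$ lying below a chord. One then replaces $\Gamma_n, \Phi_n$ by $\Gamma_0, \Phi_0$ at a cost of $\|\Gamma_n - \Gamma_0\|_{\infty, I_n}$ and $\|\Phi_n - \Phi_0\|_{\infty, I_n}$ plus a modulus-of-continuity term controlled by $\|\Phi_n - \Phi_0\|_{\infty, [x-\delta, x+\delta]}$ and strict monotonicity of $\Phi_0$ on that interval (which gives a quantitative lower bound on $\Phi_0(x+\eta) - \Phi_0(x)$). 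The lower bound is symmetric. The role of the three separate sup-norm hypotheses is exactly to make these two replacements valid: $I_n$ is the range of $\Phi_n^-$, so controlling $\Gamma_n - \Gamma_0$ and $\Phi_n - \Phi_0$ there controls the minorant, while control on $[x-\delta, x+\delta]$ pins down where $\Phi_n(x)$ and $\Phi_n(x\pm\eta)$ land.

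For part (2), I would upgrade the pointwise argument to a uniform one over $I_{n,\beta}$. The set $I_{n,\beta}$ is precisely the set of $x$ for which the $\beta$-neighborhood argument above can be run uniformly: $\Phi_0(x-\beta) \ge 0$ and $\Phi_0(x+\beta) \le u_n$ ensure that the chords used in the sandwich stay inside the estimated domain $J_n$. Uniform continuity of $\theta_0$ and of $\Phi_0$ on $I$, plus strict monotonicity of $\Phi_0$, make the modulus-of-continuity terms and the lower bound on $\Phi_0$-increments uniform in $x$; then the same two replacements, now using $\|\Gamma_n - \Gamma_0\|_{\infty, I_n}$ and $\|\Phi_n - \Phi_0\|_{\infty, I}$, give a bound on $\sup_{x \in I_{n,\beta}} |\theta_n(x) - \theta_0(x)|$ that is $o_P(1)$ after sending the neighborhood radius (chosen between $0$ and $\beta$) to zero. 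I expect the main obstacle to be the bookkeeping around the data-dependent domain $J_n = [0, u_n]$ and the data-dependent transformation $\Phi_n$: one must be careful that $\Phi_n^-$ maps into $J_n$, that $I_n$ and $I_{n,\beta}$ interact correctly (in particular that for $x \in I_{n,\beta}$ the relevant evaluation points $\Phi_n(x \pm \eta)$ are eventually in $J_n$ with probability tending to one, using $u_n \inprob u_0$), and that the GCM switching relation is applied on the correct interval. Once the deterministic sandwich inequality is stated cleanly with explicit error terms, the probabilistic conclusion follows immediately from the assumed consistency of each piece.
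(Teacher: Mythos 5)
Your plan is correct and follows essentially the same route as the paper: the paper also reduces both parts to the generalized switch relation (its Lemma~1), replaces $(\Gamma_n,\Phi_n)$ by $(\Gamma_0,\Phi_0)$ at the cost of exactly the three sup-norm errors you list, and exploits moduli of continuity of $\theta_0$ and $\Phi_0^-$ as you describe, packaging the argument as an explicit finite-sample tail bound (its Lemma~2) that is then applied with local moduli for part~(1) and uniform moduli for part~(2). The bookkeeping you flag as the main obstacle --- in particular handling the case $\Phi_n^-(\Phi_n(x)) < x$ when $x\notin I_n$, and verifying that the relevant evaluation points stay in $J_n$ --- is precisely where the paper's proof spends most of its effort, so your outline identifies the right difficulties.
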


We note that in part 1 of Theorem \ref{thm:consistency}, we require uniform convergence of $\Gamma_n$ and $\Phi_n$ to obtain a pointwise result for $\theta_n$ -- this will also be the case for Theorem \ref{thm:rates} below. This is because the GCM is a global procedure, and so, the value of $\theta_n(x_1)$ depends on $\Gamma_n(x_2)$ even for $x_2$ not near $x_1$. Without uniform consistency of $\Gamma_n$,  $\theta_n$ may indeed fail to be pointwise consistent. Also, we note that in part 1 of Theorem \ref{thm:consistency}, we require that $\Gamma_n - \Gamma_0$ and $\Phi_n - \Phi_0$ tend to zero uniformly over the set $I_n$. This requirement stems from the fact that $\theta_n$ only depends on $\Gamma_n$ through the composition $\Gamma_n \circ \Phi_n^-$, and so, values of $\Gamma_n$ only matter at points in the range of $\Phi_n^-$. In part 1, we also require that $\Phi_n - \Phi_0$ tend to zero uniformly in a neighborhood of $x$, while in part 2, we require that $\Phi_n - \Phi_0$ tend to zero uniformly over $I$. These requirements allow us to obtain results for $x$ values that are possibly outside $I_n$ for all $n$. In many applications, it may be the case that $\Gamma_n - \Gamma_0$ and $\Phi_n - \Phi_0$ both tend to zero in probability uniformly over $I$, which implies convergence over $I_n$.

The weak conditions required for Theorem~\ref{thm:consistency} are especially important for the extensions of the classical parameters that we consider in Section~\ref{sec:examples}. The estimators we propose often require estimating difficult nuisance parameters, such as conditional hazard, density and mean functions. While under mild conditions it is typically possible to construct uniformly consistent estimators of these nuisance parameters, ensuring a given local or uniform rate of convergence often requires additional knowledge about the true function. Thus, Theorem~\ref{thm:consistency} is useful for guaranteeing consistency under weak conditions.

We now provide lower bounds on the convergence rate of $\theta_n$, both pointwise and uniformly, depending on (a) the uniform rates of convergence of $\Gamma_n$ and $\Phi_n$, and (b) the moduli of continuity of $\theta_0$ and $\Phi_0^-$.


\begin{thm}[Rates of convergence]\label{thm:rates}
Let $x\in I$ be given. Suppose that, for some $\delta>0$, $[x-\delta,x+\delta]\subset \Phi_0^{-1}(J_0)$ and $\Phi_0$ is strictly increasing and continuous on $[x-\delta,x+\delta]$. Let $r_n$ be a fixed sequence such that $r_n\| \Gamma_n - \Gamma_0\|_{\infty, I_n}$, $r_n\|\Phi_n - \Phi_0\|_{\infty, I_n}$ and $r_n\|\Phi_n - \Phi_0\|_{\infty, [x-\delta, x + \delta]}$ are bounded in probability.  
\begin{enumerate}[(1),leftmargin=*]
\item If there exist $K_1(x), K_2(x)\in[0,\infty)$ and $\alpha_1, \alpha_2\in (0,1]$ such that $|\theta_0(u) - \theta_0(x)| \leq K_1(x)|u-x|^{\alpha_1}$ for all $u \in I$ and $|\Phi_0^-(u) - \Phi^-(x)| \leq K_2(x) | u -x|^{\alpha_2}$ for all $u \in J_0$, then \[r_n^{\frac{\alpha_1\alpha_2}{1+\alpha_1\alpha_2}}\left[\theta_n(x) - \theta_0(x)\right] = \bounded(1)\ .\]

\item If $\theta_0$ is constant on $[x - \delta, x + \delta]$, then $r_n\left[\theta_n(x) - \theta_0(x)\right]=\bounded(1)$.
\end{enumerate}
Let $r_n$ be a fixed sequence such that $r_n\| \Gamma_n - \Gamma_0\|_{\infty, I_n}$ and $r_n\|\Phi_n - \Phi_0\|_{\infty, I}$ are bounded in probability, and suppose that $\Phi_0$ is strictly increasing on $I$.
\begin{enumerate}[(1),leftmargin=*]
\setcounter{enumi}{2}
\item If there exist $K_1, K_2 \in[0,\infty)$ and $\alpha_1, \alpha_2\in (0,1]$ such that $|\theta_0(u) - \theta_0(v)| \leq K_1|u-v|^{\alpha_1}$ for all $u,v \in I$ and $|\Phi_0^{-}(u) - \Phi_0^{-}(v)| \leq K_2 | u -v|^{\alpha_2}$ for all $u,v \in J_0$, then
\[r_n^{\frac{\alpha_1\alpha_2}{1+\alpha_1\alpha_2}}\|\theta_n - \theta_0\|_{\infty, I_{n,\beta_n}} = \bounded(1)\] for any (possibly random) positive real sequence $\beta_n$ such that $\beta_n r_n^{1/(1+\alpha_1\alpha_2)} \inprob \infty$.
\end{enumerate}
\end{thm}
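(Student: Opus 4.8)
The plan is to reduce the distributional-rate analysis of $\theta_n = \n{Iso}_{J_n}(\Gamma_n\circ\Phi_n^-,\Phi_n)$ to a statement about the greatest convex minorant of $\Gamma_n\circ\Phi_n^-$ on $J_n$, exploiting the switching relation between the value of a left derivative of a GCM and the location of the minimizer of a suitably tilted version of the underlying function. Concretely, for a fixed $x$ in the interior of $\Phi_0^{-1}(J_0)$, write $t_0 := \Phi_0(x)$ and note $\theta_0(x) = \partial_-\Psi_0(t_0)$. For any candidate slope $\eta$, the event $\{\theta_n(x) \ge \eta\}$ — equivalently $\{\partial_-\n{GCM}_{J_n}(\Gamma_n\circ\Phi_n^-)(\Phi_n(x)) \ge \eta\}$ — can be characterized, up to boundary effects controlled by the consistency of $u_n$, $\Phi_n$ and the strict monotonicity/continuity of $\Phi_0$ near $x$, by the location of $\argmin_{t\in J_n}\{(\Gamma_n\circ\Phi_n^-)(t) - \eta t\}$ relative to $\Phi_n(x)$. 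I would state this switching lemma carefully once and reuse it for all three parts.

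For part~(1), I set $\eta = \theta_0(x) + \varepsilon$ for $\varepsilon>0$ and bound the probability that the tilted argmin lies on the wrong side of $\Phi_n(x)$. On the correct side, the drift of the deterministic part $(\Gamma_0\circ\Phi_0^-)(t) - \eta t = \Psi_0(t) - \eta t$ away from its minimizer is, by the Hölder hypotheses on $\theta_0$ and $\Phi_0^-$, of order at least $|t - t_0|^{1+\alpha_1\alpha_2}$ on a neighborhood (here one composes the $\alpha_1$-Hölder bound on $\theta_0$ with the $\alpha_2$-Hölder control of $\Phi_0^-$ so that $|\Psi_0(t)-\Psi_0(t_0)-\theta_0(x)(t-t_0)| \gtrsim |t-t_0|^{1+\alpha_1\alpha_2}$ after accounting for the change of variables). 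The stochastic fluctuation $(\Gamma_n\circ\Phi_n^- - \Gamma_0\circ\Phi_0^-)(t) - (\Gamma_n\circ\Phi_n^- - \Gamma_0\circ\Phi_0^-)(t_0)$, plus the linear perturbation from replacing $t_0$ by $\Phi_n(x)$, is $\bounded(r_n^{-1})$ uniformly over $I_n$ by hypothesis and by the local control of $\Phi_n - \Phi_0$. Balancing a deviation $|t-t_0|\asymp \tau$ against $\varepsilon\tau$ and the $r_n^{-1}$ fluctuation forces $\tau \asymp (\varepsilon r_n)^{-1/(1+\alpha_1\alpha_2)}\wedge$ constants; feeding this back through the switching relation and taking $\varepsilon \asymp r_n^{-\alpha_1\alpha_2/(1+\alpha_1\alpha_2)}\cdot M$ for large $M$ shows $P(\theta_n(x) - \theta_0(x) > M r_n^{-\alpha_1\alpha_2/(1+\alpha_1\alpha_2)})$ is small; the lower tail is symmetric with $\eta = \theta_0(x)-\varepsilon$. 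Part~(2) is the degenerate case $\alpha_1$-effectively-$\infty$: when $\theta_0$ is constant on $[x-\delta,x+\delta]$, the deterministic drift of $\Psi_0(t)-\eta t$ is exactly linear in $t$ with slope $\mp\varepsilon$ on the relevant interval, so one balances $\varepsilon\tau$ directly against $r_n^{-1}$, giving $\tau\asymp (\varepsilon r_n)^{-1}$ and hence the rate $r_n$ after choosing $\varepsilon\asymp M/r_n$ — here I must also check that the argmin cannot escape $[x-\delta,x+\delta]$, which follows from the strict increase of $\Phi_0$ there together with the global behaviour of $\Psi_0$ off the flat region and uniform consistency of $\Gamma_n,\Phi_n$ on $I_n$.

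For part~(3), I would upgrade the pointwise argument to a uniform one over $I_{n,\beta_n}$ by a peeling/union-bound argument over a grid of points in $I_{n,\beta_n}$ of mesh comparable to $\beta_n$, using the global Hölder hypotheses (valid for all $u,v$) so that the constants $K_1,K_2$ are uniform; the condition $\beta_n r_n^{1/(1+\alpha_1\alpha_2)}\inprob\infty$ is exactly what guarantees that the localization neighborhood of size $\tau\asymp r_n^{-1/(1+\alpha_1\alpha_2)}$ around each grid point stays inside $I$ and inside the region where the drift lower bound is in force, and that the number of grid points times the (exponentially or polynomially) small tail at each point still vanishes. Monotonicity of $\theta_n$ lets me interpolate between grid points at negligible cost. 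The main obstacle I anticipate is the bookkeeping around the data-dependent domain: one must show that $\Phi_n(x)$ lands in the interior of $J_n$ for the relevant $x$ with probability tending to one, that $\Phi_n^-$ is close to $\Phi_0^-$ uniformly on the range of interest (deduced from uniform closeness of $\Phi_n$ to the strictly increasing $\Phi_0$), and that the switching relation is not corrupted by the random endpoint $u_n$ — all of which rely on combining the stated consistency hypotheses with the strict monotonicity and continuity of $\Phi_0$, but require care to make uniform. Compared to the classical Grenander setting, the genuinely new work is precisely this composition-with-$\Phi_n^-$ layer; once it is handled, the drift-versus-fluctuation balancing is standard.
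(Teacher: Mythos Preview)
Your overall framework---switching relation followed by a drift-versus-fluctuation comparison---is indeed the route the paper takes, packaged there as a single finite-sample tail bound (Lemma~\ref{tail_bound}) which is then specialized with particular moduli $\omega,\gamma$. However, there is a genuine error in how you deploy the H\"older hypotheses. You claim the bounds on $\theta_0$ and $\Phi_0^-$ yield
\[
\bigl|\Psi_0(t)-\Psi_0(t_0)-\theta_0(x)(t-t_0)\bigr|\ \gtrsim\ |t-t_0|^{1+\alpha_1\alpha_2},
\]
but the hypotheses of Theorem~\ref{thm:rates} are \emph{upper} bounds on moduli of continuity, and composing gives only $|\psi_0(t)-\psi_0(t_0)|\le C|t-t_0|^{\alpha_1\alpha_2}$, hence the reverse inequality $\lesssim$ for the second-order remainder of $\Psi_0$. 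A lower bound of the type you assert would require a lower bound on the growth of $\psi_0$; that is exactly the extra ingredient in Theorem~\ref{thm:conv_in_dist} (the exact local rate $\pi_0(x)|u|^\alpha$) and is what produces the sharper rate there, but it is not available here. The correct use of the H\"older upper bound runs the other way: it guarantees that on $[t_0,t_0+\tau]$ with $\tau$ of order $\varepsilon^{1/(\alpha_1\alpha_2)}$ one still has $\psi_0(s)-\psi_0(t_0)\le\varepsilon$, so the tilted function $t\mapsto\Psi_0(t)-(\theta_0(x)+\varepsilon)t$ continues to decrease there; the cumulative drop over this interval is at least $\int_0^{\tau}[\varepsilon-C u^{\alpha_1\alpha_2}]\,du$, which is of order $\varepsilon^{(1+\alpha_1\alpha_2)/(\alpha_1\alpha_2)}$. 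It is this quantity---the paper's $r(\delta,\eta)$---that the $\bounded(r_n^{-1})$ fluctuation must overcome for the argmin to land on the wrong side, and solving yields $\varepsilon$ of order $r_n^{-\alpha_1\alpha_2/(1+\alpha_1\alpha_2)}$. Your balancing formula $\tau\asymp(\varepsilon r_n)^{-1/(1+\alpha_1\alpha_2)}$ is accordingly off as well.

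For part~(3) there is a second gap. A grid-plus-union-bound over a growing number of points requires quantitative tail decay at each point, but the hypotheses supply only $\bounded(r_n^{-1})$ control of $\Gamma_n-\Gamma_0$ and $\Phi_n-\Phi_0$, not exponential or even polynomial tails. The paper sidesteps this entirely: because the moduli $\omega,\gamma$ in Lemma~\ref{tail_bound} are taken to be global (independent of $x$), the pointwise inclusion $\{|\theta_n(x)-\theta_0(x)|>\eta\}\subseteq\{B_{n,1}(\eta)>r(\beta/2,\eta/2)\}\cup\{B_{n,2}\ge c(\beta/2,\eta/2)\}$ holds for every $x\in I_{n,\beta}$ with the \emph{same} right-hand side. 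The supremum over $x$ is therefore contained in that same union, and a single application of the $\bounded(r_n^{-1})$ bound closes the argument. No gridding or union bound is needed.
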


We note here that the uniform results only cover subintervals of the interval over which the GCM procedure is performed. This should not be surprising given the poor behavior of Grenander-type estimators at the boundary of the GCM interval, as discussed, for example, in \cite{woodroofe1993penalized}, \cite{kulikov2006} and \cite{balabdaoui2011grenander}. Various boundary corrections have been proposed -- applying these in our general framework is an interesting avenue for future work.

We also note that, in Theorem~\ref{thm:rates}, when $\theta_0$ and $\Phi_0$ are locally or globally Lipschitz, then $\alpha_1 = \alpha_2 =1$ and the resulting rate is $\bounded(r_n^{-1/2})$, which yields $\bounded(n^{-1/4})$ when $r_n =n^{1/2}$. This rate is slower than the rate $n^{-1/3}$ that is often achievable for pointwise convergence when $\theta_0$ and $\Phi_0$ are differentiable at $x$ and the primitive estimator converges at rate $n^{-1/2}$, as we discuss below. However, the assumptions in Theorems \ref{thm:rates} are significantly weaker than typically required for the $n^{-1/3}$ rate of convergence: they constrain the supremum norm of the estimation error rather than its modulus of continuity, and hold when the true function is Lipschitz but not differentiable. Our results also cover situations in which $\theta_0$ or $\Phi_0$ are in H\"{o}lder classes. The rates provided by Theorem \ref{thm:rates} should thus be seen as lower bounds on the true rate, for use when less is known about the properties of the estimation error or of the true functions. The distributional results we provide below recover the usual rates under stronger conditions.


For a fixed sequence $r_n$ of positive real numbers, we now study the pointwise convergence in distribution of $r_n\left[\theta_n(x) - \theta_0(x)\right]$ at an interior point $x\in I$ at which $\Phi_0$ has a strictly positive derivative. The rate $r_n$ depends on two interdependent factors. First, we suppose that there exists some $\alpha > 0$ such that $|\theta_0(x + u) - \theta_0(x)| = \pi_0(x)|u|^\alpha + \fasterthandet(1)$ as $u \to 0$ for some constant $\pi_0(x)>0$. Second, writing $\Gamma_{n,0}:=\Gamma_n-\Gamma_0$ and $\Phi_{n,0}:=\Phi_n-\Phi_0$, we suppose that there exists a sequence of positive real numbers $c_n \to \infty$ such that the appropriately localized process 
\[W_{n,x}:u\mapsto c_n^{\alpha + 1}\left\{ \Gamma_{n,0}( x + u c_n^{-1}) -  \Gamma_{n,0}(x) -  \theta_0(x)\left[ \Phi_{n,0}( x + uc_n^{-1}) -  \Phi_{n,0}(x)\right]\right\}\] 
converges weakly. We note that $W_{n,x}$ depends on $\alpha$. As we formalize below, if $r_n=c_n^\alpha$, then $r_n\left[\theta_n(x) - \theta_0(x)\right]$ has a nondegenerate limit distribution under some conditions. We now introduce some of the conditions that we build upon:
\begin{description}[style=multiline,leftmargin=1cm]
\item[(A1)] for each $M>0$, $\left\{W_{n,x}(u):|u|\leq M\right\}$ converges weakly in $\ell^{\infty}[-M,M]$ to a tight limit process $\left\{W_{x}(u):|u|\leq M\right\}$ with almost surely lower semi-continuous sample paths;
\item[(A2)] for every $c \in \d{R}$, $\displaystyle\sup\argmax_{u \in \d{R}}\left\{W_{x}(u) +  \pi_0(x) \Phi_0'(x)(\alpha + 1)^{-1} |u|^{\alpha + 1} + c \Phi_0'(x) u\right\} = \bounded(1)$;
\item[(A3)] there exist $\beta \in (1,1 + \alpha)$, $\delta^* > 0$ and a sequence $f_n: \d{R}^+\rightarrow \d{R}^+$ such that  $u\mapsto u^{-\beta} f_n(c_n u)$ is decreasing, $f_n(1) = O(1)$, and $E_{0}\left[\sup_{|u| \leq c_n\delta} |W_{n, x}(u)|\right] \leq f_n( c_n\delta)$ for all large $n$ and $\delta \leq \delta^*$.
\end{description}
In addition, we introduce  conditions on the uniform convergence of  estimators $\Phi_n$ and $\Gamma_n$:
\begin{description}[style=multiline,leftmargin=1cm]
\item[(A4)] $c_n E_{0}\left[\sup_{|v| < \delta} |\Phi_n(x + v) - \Phi_0(x+v)|\right] \longrightarrow 0$ for some $\delta > 0$;
\item[(A5)] $\| \Gamma_{n,0} - \theta_0(x)\cdot \Phi_{n,0}\|_{\infty,I_n} \inprob 0$.
\end{description}
\begin{thm}[Convergence in distribution] \label{thm:conv_in_dist} If $x$ is an interior point of $I$ at which $\Phi_0$ is continuously differentiable with positive derivative and $\theta_0$ satisfies $\lim_{u \to 0} |\theta_0(x+u) - \theta_0(x)|/|u|^\alpha = \pi_0(x)$, conditions (A1)--(A5) imply that 
\[r_n \left[ \theta_n(x) - \theta_0(x) \right] \indist \Phi_0'(x)^{-1} \partial_- \n{GCM}_{\d{R}}\left\{ v\mapsto W_{x}(v) + \left[\frac{\pi_0(x) \Phi_0'(x)}{\alpha+1}\right] |v|^{\alpha + 1} \right\}(0)\] 
with $r_n := c_n^{\alpha}$. If in addition $\alpha = 1$, $\pi_0(x) = \theta_0'(x)$ and $W_x$ possesses stationary increments, then 
\[ r_n \left[ \theta_n(x) - \theta_0(x) \right] \indist  -\theta_0'(x) \argmin_{u \in \d{R}} \left\{W_{x}(u) + \tfrac{1}{2} \theta_0'(x)\Phi'_0(x) u^2 \right\}.\]
Furthermore, if $W_{x} = [\kappa_0(x)]^{1/2} W_0$ with $W_0$ a standard two-sided Brownian motion process satisfying $W_0(0)=0$, then $r_n \left[ \theta_n(x) - \theta_0(x) \right] \indist \tau_0(x) Z $ with  $\tau_0(x):=\left[4\theta_0'(x)\kappa_0(x)/\Phi_0'(x)^2\right]^{1/3}$ and $Z:=\argmin_{u \in \d{R}} \left\{W_0(u) + u^2 \right\}$.
\end{thm}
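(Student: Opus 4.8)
The plan is to follow the switching‑relation (inversion) approach of \cite{groeneboom1985density}, carrying the data‑dependent transformation $\Phi_n$ through the argument by means of (A4)--(A5). Write $\Psi_n := \Gamma_n\circ\Phi_n^-$ and $\Lambda_n := \n{GCM}_{J_n}(\Psi_n)$, so that $\theta_n(x) = \partial_-\Lambda_n(\Phi_n(x))$. Because $\Phi_n\in\mathscr{D}_{I,J_n}$ and $\Phi_n^-$ is its generalized inverse, minimizing $t\mapsto\Psi_n(t)-at$ over $J_n$ is equivalent (up to the range of $\Phi_n^-$) to minimizing $z'\mapsto\Gamma_n(z')-a\,\Phi_n(z')$ over $I$, and the switching relation then says that $\{\theta_n(x)\ge a\}$ and $\{\hat z_n(a)\le x\}$ differ only by a boundary event, where $\hat z_n(a)$ is a fixed selection from $\argmin_{z'\in I}\{\Gamma_n(z')-a\,\Phi_n(z')\}$. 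Since $\Gamma_0=\Psi_0\circ\Phi_0$ gives $\Gamma_0'=\theta_0\cdot\Phi_0'$ on a neighbourhood of $x$, the deterministic target $z'\mapsto\Gamma_0(z')-\theta_0(x)\Phi_0(z') = \int_{a_I}^{z'}[\theta_0(s)-\theta_0(x)]\Phi_0'(s)\,ds$ has a well‑separated minimum at $x$ under $\lim_{u\to0}|\theta_0(x+u)-\theta_0(x)|/|u|^\alpha = \pi_0(x)>0$; combined with (A5) this yields the preliminary consistency $\theta_n(x)\inprob\theta_0(x)$, which anchors the localization below.

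Fix a continuity point $z$ of the limit law and set $a_n := \theta_0(x)+c_n^{-\alpha}z$. Substituting $z'=x+c_n^{-1}u$ and rescaling by $c_n^{\alpha+1}>0$ (which leaves the minimizer unchanged), the event $\{\hat z_n(a_n)\le x\}$ is, up to the same boundary event, the event that the minimizer in $u$ of
\[ W_{n,x}(u) + c_n^{\alpha+1}\!\int_0^{c_n^{-1}u}\![\theta_0(x+s)-\theta_0(x)]\Phi_0'(x+s)\,ds - c_n\,z\big[\Phi_n(x+c_n^{-1}u)-\Phi_n(x)\big] \]
is at most $0$, the first term being exactly the localized process of (A1)--(A3). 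By the H\"older expansion of $\theta_0$ at $x$ together with continuity of $\Phi_0'$ at $x$, the middle term tends to $\tfrac{\pi_0(x)\Phi_0'(x)}{\alpha+1}|u|^{\alpha+1}$ uniformly on compacts; in the last term the $\Phi_0$ part equals $-z\,\Phi_0'(x)\,u+\fasterthandet(1)$ by differentiability of $\Phi_0$ at $x$, while the $\Phi_n-\Phi_0$ part is $\fasterthan(1)$ uniformly on compacts by (A4). Hence, by (A1), the displayed objective converges weakly in $\ell^\infty[-M,M]$, for every $M$, to $u\mapsto W_x(u)+\tfrac{\pi_0(x)\Phi_0'(x)}{\alpha+1}|u|^{\alpha+1}-z\,\Phi_0'(x)\,u$, which has a.s.\ lower semicontinuous paths and, by strict convexity of its drift, a.s.\ a unique minimizer. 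Condition (A3) is the hypothesis of a standard rate lemma (in the spirit of \cite{kim1990cube}): together with the $|u|^{\alpha+1}$ curvature it forces $c_n[\hat z_n(a_n)-x]=\bounded(1)$, so that no mass escapes the compacts, while (A2) guarantees the corresponding limit minimizer is a.s.\ finite, uniformly over the family of linear tilts indexed by $z$. An argmin continuous‑mapping theorem then gives $c_n[\hat z_n(a_n)-x]\indist\argmin_u\{W_x(u)+\tfrac{\pi_0(x)\Phi_0'(x)}{\alpha+1}|u|^{\alpha+1}-z\,\Phi_0'(x)\,u\}$. Consequently $\mathbb{P}(r_n[\theta_n(x)-\theta_0(x)]\ge z) = \mathbb{P}(\hat z_n(a_n)\le x)+o(1)\to\mathbb{P}(\argmin\le0)$; applying the switching relation a second time, now to $\n{GCM}_{\d{R}}\{v\mapsto W_x(v)+\tfrac{\pi_0(x)\Phi_0'(x)}{\alpha+1}|v|^{\alpha+1}\}$ (whose left derivative at $0$ is $\ge z\,\Phi_0'(x)$ exactly when that argmin is $\le0$), the limit is identified as $\Phi_0'(x)^{-1}\partial_-\n{GCM}_{\d{R}}\{v\mapsto W_x(v)+\tfrac{\pi_0(x)\Phi_0'(x)}{\alpha+1}|v|^{\alpha+1}\}(0)$, and since $z$ ranges over all continuity points of the continuous limit law, this is the asserted convergence in distribution.

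The two refinements are purely algebraic. When $\alpha=1$ and $\pi_0(x)=\theta_0'(x)$, put $H(v):=W_x(v)+\tfrac12\theta_0'(x)\Phi_0'(x)v^2$ and $\xi := \argmin_v\{W_x(v)+\tfrac12\theta_0'(x)\Phi_0'(x)v^2\}$; completing the square, $\argmin_u\{H(u)-bu\} = \tfrac{b}{\theta_0'(x)\Phi_0'(x)}+\argmin_v\{W_x(v+\tfrac{b}{\theta_0'(x)\Phi_0'(x)})+\tfrac12\theta_0'(x)\Phi_0'(x)v^2\}$, and since $W_x(0)=0$ and $W_x$ has stationary increments the second summand is equal in law to $\xi$. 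Hence $\mathbb{P}(\partial_-\n{GCM}_{\d{R}}(H)(0)\ge b)=\mathbb{P}(\argmin_u\{H(u)-bu\}\le0)=\mathbb{P}(\xi\le-\tfrac{b}{\theta_0'(x)\Phi_0'(x)})$, so $\Phi_0'(x)^{-1}\partial_-\n{GCM}_{\d{R}}(H)(0)$ is equal in law to $-\theta_0'(x)\,\xi$, which is the second display. Finally, if $W_x=[\kappa_0(x)]^{1/2}W_0$ with $W_0$ a standard two‑sided Brownian motion satisfying $W_0(0)=0$, Brownian self‑similarity $W_0(a\,\cdot)\stackrel{d}{=}a^{1/2}W_0(\cdot)$ with scale $a:=\big(2[\kappa_0(x)]^{1/2}/[\theta_0'(x)\Phi_0'(x)]\big)^{2/3}$ reduces $\xi$ to $a\,\argmin_v\{W_0(v)+v^2\}=a\,Z$ in law; a direct computation gives $\theta_0'(x)\,a=\big[4\theta_0'(x)\kappa_0(x)/\Phi_0'(x)^2\big]^{1/3}=\tau_0(x)$, and $-Z\stackrel{d}{=}Z$ by symmetry of $W_0$, so $r_n[\theta_n(x)-\theta_0(x)]\indist\tau_0(x)\,Z$.

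\textbf{Main obstacle.} The crux is the argmin continuous‑mapping step: one must combine the compact‑set weak convergence (A1) with the rate control (A3) and the finiteness control (A2) to pass to the global minimizer, the delicate part being the peeling argument behind (A3) preventing the pre‑limit minimizer from drifting to scales coarser than $c_n^{-1}$. This is compounded by the data‑dependent composition $\Gamma_n\circ\Phi_n^-$: the manipulations of the generalized inverse and of the switching relation must be carried out carefully, and the nuisance fluctuations of $\Phi_n$ absorbed via (A4)--(A5) at precisely the right places. The two distributional reductions at the end, by contrast, are routine.
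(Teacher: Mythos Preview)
Your proposal is correct and follows essentially the same switching-relation + argmin continuous-mapping route as the paper's proof, with (A1)--(A5) deployed in the same roles (in particular, (A5) for consistency of the pre-limit minimizer, (A3) together with the $|u|^{\alpha+1}$ curvature for its $\bounded(1)$-tightness via a rate lemma, and a second application of the switch relation to identify the limit). One sharpening worth making explicit: the generalized switch relation (Lemma~\ref{lemma:modified_switch}) actually compares against $\Phi_n^-(\Phi_n(x))$ rather than $x$, so the discrepancy $c_n[\Phi_n^-(\Phi_n(x))-x]$ must be shown to be $\fasterthan(1)$ via (A4) --- this is the precise content of your ``boundary event,'' and the paper treats it as a separate step rather than folding it into the switch relation itself.
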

The latter limit distribution is referred to as a scaled Chernoff distribution, since $Z$ is said to follow the standard Chernoff distribution. This distribution appears prominently in classical results in nonparametric monotone function estimation and has been extensively studied (e.g., \citealp{groeneboom2001jcgs}). It can also be defined as the distribution of the slope at zero of $\n{GCM}_{\d{R}}\{u\mapsto W_0(u) + u^2\}$.

Theorem~\ref{thm:conv_in_dist} applies in the common setting in which $\theta_0$ is differentiable at $x$ with positive derivative -- in other words, when $\alpha = 1$. However, as in \cite{wright1981} and \cite{anevski2006general},  Theorem~\ref{thm:conv_in_dist} also applies in additional situations, including when $\theta_0$ has $\alpha\in\{2,3,\ldots\}$ derivatives at $x$, with null derivatives of order $j<\alpha$ and positive derivative of order $\alpha$. Nevertheless, Theorem~\ref{thm:conv_in_dist} does not cover situations in which $\theta_0$ is flat in a neighborhood of $x$. The limit distribution of the Grenander estimator at flat points was studied in \cite{carolan2008}, but it appears that similar results have not been derived for Grenander-type or generalized Grenander-type estimators.

We note the similarity of our Theorem~\ref{thm:conv_in_dist} to Theorem~2 of \cite{anevski2006general}. For the special case in which $\Phi_0$ is the identity transform, the consequents of the two results coincide. Our result explicitly permits alternative transforms.  Both results require weak convergence of a stochastic part of the primitive process, and also require the same local rate of growth of $\theta_0$. Additionally, condition (A2) is implied if for every $\epsilon$ and $\delta$ positive, there exists a finite $m \in (0, +\infty)$ such that $P_0(\sup_{|v| \geq m} |W_x(v)| |v|^{-\alpha -1} > \epsilon) < \delta$, as in Assumption A5 of \cite{anevski2006general}. However, the remaining conditions and methods of proof differ. To prove our result, we first generalize the switch relation of \cite{groeneboom1985density} and use it to convert $P_0(r_n \left[ \theta_n(x) - \theta_0(x) \right]> \eta)$ into the probability that the minimizer of a process involving $W_{n,x}$ falls below some value. After establishing weak convergence of this process, we then use conditions (A2) through (A5) to justify application of the argmin continuous mapping theorem. In contrast, \cite{anevski2006general} establish their result using a direct appeal to convergence in distribution of $\partial_-\n{GCM}_C(Y_n)(0)$ to $\partial_-\n{GCM}_C(Y_0)(0)$, where $Y_n$ is a local limit process and $Y_0$ its weak limit. They also provide lower-level sufficient conditions for this convergence. It may be possible to establish the consequent of Theorem~\ref{thm:conv_in_dist}, permitting in particular the use of a non-trivial transformation $\Phi_0$, using Theorem~2 of \cite{anevski2006general} or a suitable generalization thereof. We have specified our sufficient conditions with applications to the setting $\alpha = 1$ and $c_n = n^{1/3}$ in mind, as we discuss at length in the next section.

Suppose that $W^0_x$ is the limit process that arises when no domain transformation is used in the construction of a generalized Grenander-type estimator, that is, when both $\Phi_0$ and $\Phi_n$ are taken to be the identity map. In this case, under (A1)--(A5), Theorem~\ref{thm:conv_in_dist} indicates that 
\[ r_n \left[ \theta_n(x) - \theta_0(x) \right] \indist \partial_- \n{GCM}_{\d{R}}\left\{ v\mapsto W_{x}^0(v) + \left[\frac{\pi_0(x)}{\alpha + 1}\right] |v|^{\alpha + 1} \right\}(0)\ .\]
It is natural to ask how this limit distribution compares to the one obtained using a non-trivial transformation $\Phi_0$. In particular, does using $\Phi_0$ change the pointwise distributional results for $\theta_n$? The answer is  of course negative whenever $W_x$ and $\Phi_0'(x) W^0_x$ are equal in distribution, since $\n{GCM}_{\d{R}}$ is a homogeneous operator. A more detailed discussion of this question and lower-level conditions are provided in the next section.

\section{Refined results for asymptotically linear primitive and transformation estimators}\label{sec:improved}

\subsection{Distributional results}


In applications of their main result, \cite{anevski2006general} focus primarily on providing lower-level conditions to characterize the relationship between various dependence structures and asymptotic results for monotone regression and density function estimation. \cite{anevski2011}, \cite{dedecker2011} and \cite{bagchi2017} provide additional applications of \cite{anevski2006general} to monotone function estimation with dependent data. Our Theorem~\ref{thm:conv_in_dist} could be used, for instance, to relax the common assumption of a uniform design in the analysis of monotone regression estimators. Here, we pursue an alternative direction, focusing instead on providing lower-level conditions for consistency of $\theta_n$ and convergence in distribution of $r_n[\theta_n(x)-\theta_0(x)]$ for use in the important setting in which $\alpha = 1$, $r_n=c_n= n^{1/3}$, the data are independent and identically distributed, and $\Gamma_n$ and $\Phi_n$ are asymptotically linear estimators. Such settings arise frequently, for instance, when the primitive and transformation parameters are smooth mappings of the data-generating mechanism.

Below, we write $Pf$ to denote $\int f (o)dP(o)$ for any probability measure $P$ and $P$-integrable function $f:\mathscr{O}\rightarrow\mathbb{R}$. We also use $\d{P}_n$ to denote the empirical distribution of independent observations $O_1,O_2,\ldots,O_n$ from $P_0$ so that $\d{P}_nf=\frac{1}{n}\sum_{i=1}^{n}f(O_i)$ for any $f:\mathscr{O}\rightarrow\mathbb{R}$.

Suppose that there exist functions $D_{x,0}^*:\mathscr{O}\rightarrow \mathbb{R}$ and $L_{x,0}^*:\mathscr{O}\rightarrow \mathbb{R}$ depending on $P_0$ such that, for each $x\in I$, $P_0D^*_{x,0}=P_0L^*_{x,0}=0$ and both $P_0D^{*2}_{x,0}$ and $P_0L^{*2}_{x,0}$ are finite, and  \begin{align}
\Gamma_{n}(x)-\Gamma_{0}(x)  &=\d{P}_n D_{x, 0}^* + H_{x,n}\quad\mbox{and}\quad \Phi_{n}(x) - \Phi_{0}(x) = \d{P}_n L_{x, 0}^* + R_{x,n}\ ,\label{eq:asy_linear}
\end{align}
where $H_{x,n}$ and $R_{x,n}$ are stochastic remainder terms. If $n^{1/2}\sup_{x \in I} |H_{x,n}|$ and $n^{1/2}\sup_{x \in I} |R_{x,n}|$ tend to zero in probability, we say that $\Gamma_n$ and $\Phi_n$ are \emph{uniformly asymptotically linear} over $I$ as estimators of $\Gamma_0$ and $\Phi_0$, respectively. The objects $D^*_{x,0}$ and $L^*_{x,0}$ are referred to as the influence functions of $\Gamma_n(x)$ and $\Phi_n(x)$, respectively, under sampling from $P_0$.

Assessing consistency and uniform consistency of $\theta_n$ is straightforward when display~\eqref{eq:asy_linear} holds. For example, if the classes $\{ D_{x,0}^* : x \in I\}$ and $\{L_{x,0}^* : x \in I\}$ are $P_0$-Donsker, and  $n^{1/2}\sup_{x \in I} |H_{x,n}|$ and $n^{1/2}\sup_{x \in I} |R_{x,n}|$ are bounded in probability, then $n^{1/2}\|\Gamma_n - \Gamma_0\|_{\infty, I}$ and $n^{1/2}\|\Phi_n - \Phi_0\|_{\infty, I}$ are both bounded in probability. Thus, Theorems~\ref{thm:consistency} and~\ref{thm:rates}  can be directly applied with $r_n = n^{1/2}$ provided the required conditions on $\theta_0$ and $\Phi_0$ hold. As such, we focus here on deriving a refined version of Theorem \ref{thm:conv_in_dist} for use whenever display~\eqref{eq:asy_linear} holds.

It is reasonable to expect the linear terms $\d{P}_nD^*_{x,0}$ and $\d{P}_nL^*_{x,0}$ to drive the behavior of the standardized difference $r_n[\theta_n(x)-\theta_0(x)]$ in Theorem~\ref{thm:conv_in_dist}. The natural rate here is $c_n = r_n=n^{1/3}$, for which \cite{kim1990cube} provide intuition. Our first goal in this section is to provide sufficient conditions for weak convergence of the process $\{ n^{1/6}\d{G}_n g_{x,n^{-1/3} u} : |u| \leq M\}$, where $\d{G}_n$ is the empirical process $n^{1/2}(\d{P}_n-P_0)$ and we define the localized difference function $g_{x,v} := D_{x + v,0}^* - D_{x,0}^*- \theta_0(x)(L_{x + v,0}^* - L_{x,0}^*)$.  \cite{kim1990cube} also provide detailed conditions for weak convergence of processes of this type. Building upon their results, we are able to provide simplified sufficient conditions for convergence in distribution of $n^{1/3}[\theta_n(x)-\theta_0(x)]$ when $\Gamma_n$ and $\Phi_n$ are uniformly asymptotically linear estimators.

We begin by introducing conditions we will refer to. First, we define $\s{G}_{x,R} := \{g_{x,u} : |u| \leq R\}$ and suppose that $\s{G}_R$ has envelope function $G_{x,R}$. The first two conditions concern the size of $\s{G}_{x,R}$ for small $R$ in terms of bracketing or uniform entropy numbers, which for completeness we define here -- see  \cite{van1996weak} for a comprehensive treatment. Denote by $\| G\|_{P,2} = [ P( G^2)]^{1/2}$ the $L_2(P)$ norm of a given $P$-square-integrable function $G:\mathscr{O}(P)\rightarrow\mathbb{R}$. The bracketing number $N_{[]}(\varepsilon, \s{G}, L_2(P))$ of a class $\s{G}$ with respect to the $L_2(P)$ norm is the smallest number of $\varepsilon$-brackets needed to cover $\s{G}$, where an $\varepsilon$-bracket is any set of functions $\{ f: \ell \leq f \leq u\}$ with $\ell$ and $u$ such that $\|\ell-u\|_{P,2} < \varepsilon$. The covering number $N(\varepsilon, \s{G}, L_2(Q))$ of $\s{G}$ with respect to the $L_2(Q)$ norm is the smallest number of $\varepsilon$-balls in $L_2(Q)$ required to cover $\s{G}$. The uniform covering number is the supremum of $N(\varepsilon\|G\|_{2,Q}, \s{G}, L_2(Q))$ over all discrete probability measures $Q$ such that $\|G\|_{2,Q} > 0$, where $G$ is an envelope function for $\s{G}$. We consider conditions on the size of $\s{G}_{x,R}$:
\begin{description}[style=multiline,leftmargin=1cm]
\item[(B1)] for some constants $C>0$ and $V>-1$, either (B1a) $\log N_{[]}(\varepsilon \|G_{x,R}\|_{P_0,2}, \s{G}_{x,R}, L_2(P_0)) \leq C\varepsilon^{2V}$ or (B1b) $\log \sup_Q N(\varepsilon \|G_{x,R}\|_{Q,2}, \s{G}_{x,R}, L_2(Q)) \leq C\varepsilon^{2V}$ for all $\varepsilon \in (0,1]$ and $R$ small enough;
\item[(B2)] $P_0 G_{x,R}^2 = O(R)$, and for all $\eta > 0$, $P_0 G_{x,R}^2\{RG_{x,R}>\eta\} = o(R)$, as $R\rightarrow 0$.
\end{description}
Condition (B1) replaces the notion of \emph{uniform manageability} of the class  $\s{G}_{x,R}$ for small $R$ as defined in \cite{kim1990cube}, whereas condition (B2) directly corresponds to their condition (vi). Since bounds on the bracketing and uniform entropy numbers have been derived for many common classes of functions, condition (B1) can be readily checked in practice. Together, conditions (B1) and (B2) ensure that $\mathscr{G}_{x,R}$ is a relatively small class, and this helps to establish the weak convergence of the localized process $\{W_{n,x}(u):|u|\leq M\}$.

As in \cite{kim1990cube}, to guarantee that the covariance function of this localized process stabilizes, it suffices that $\delta^{-1}\sup_{|u - v| < \delta} P_0(g_{x,u}- g_{x,v})^2$ be bounded for small enough $\delta>0$ and that, up to a scaling factor possibly depending on $x$, $\sigma_{x,\alpha}(u,v):=\alpha^{-1}P_0 [(g_{x, \alpha u}-P_0g_{x,\alpha u})(g_{x, \alpha v}-P_0g_{x,\alpha v})]$ tend to the covariance function $\sigma^2(u,v)$ of a two-sided Brownian motion as $\alpha\rightarrow 0$. Below, we provide simple conditions that imply these two statements for a broad class of settings that includes our examples.

The covariance function of the Gaussian process to which $\{\d{G}_n[D_{t,0}^* -\theta_0(x) L_{t,0}^*]:t\}$ converges weakly is defined pointwise as $\Sigma_0(s,t):= P_0 [D_{s,0}^* -\theta_0(x) L_{s,0}^*][D_{t,0}^* -\theta_0(x) L_{t,0}^*]$. The behavior of $\Sigma_0$ near $(x,x)$ dictates the covariance of the local limit process $W_x$ and hence the scale parameter $\kappa_0(x)$. If $\Sigma_0$ is differentiable in $(s,t)$ at $(x,x)$, it follows that $\kappa_0(x)=0$ and $\theta_n$ converges at a faster rate, although possibly with an asymptotic bias. When instead scaled Chernoff asymptotics apply, the covariance function can typically be written as 
\begin{equation} \Sigma_0(s,t) = \Sigma^*_0(s,t) + \iint_{-\infty}^{s \wedge t} A_0(s, t, v, w)H_0(dv, w)Q_0(dw)\label{eq:cov_decomp}\end{equation}
for some functions $\Sigma^*_0:I\times I\rightarrow \mathbb{R}$, $A_0:I\times I\times I\times \mathscr{W}\rightarrow \mathbb{R}$ and $H_0:I\times \mathscr{W}\rightarrow \mathbb{R}$ depending on $P_0$, where $Q_0$ is a probability measure induced by $P_0$ on some measurable space $\mathscr{W}$. In this representation, $\Sigma^*_0$ is taken to be the differentiable portion of the covariance function, which does not contribute to the scale parameter. The second summand is not differentiable at $(x,x)$ and makes $\sigma_{x,\alpha}(u,v)$ tend to a non-zero limit. We consider cases in which $\Sigma^*_0$, $A_0$ and $H_0$ satisfy the following conditions:
\begin{description}[style=multiline,leftmargin=1cm]
\item[(B3)] Representation \eqref{eq:cov_decomp} holds, and for some $\delta>0$, setting $B_\delta(x):=(x-\delta,x+\delta)$, it is also true that:
\begin{description}[style=multiline,leftmargin=1.25cm]
\item[(B3a)] $\Sigma^*_0$ is symmetric in its arguments and continuously differentiable on $B_{\delta}(x)$;
\item[(B3b)] $A_0$ is symmetric in its first two arguments, and $s \mapsto A_0(s, t, v, w)$ is differentiable for $Q_0$-almost every $w$ and each $s,t,v\in B_{\delta}(x)$, with derivative $A_0'(s,t,v,w)$ continuous in $s,t,v$ each in $B_{\delta}(x)$ for $Q_0$-almost every $w$ and satisfying the boundedness condition
\[\iint_{-\infty}^{x + \delta} \sup_{s, t \in B_{\delta}(x)} |A_0'(s, t, v, w)| H_0(dv, w) Q_0(dw) < \infty\ ;\]
\item[(B3c)] $v \mapsto A_0(x,x,v, w)$ is continuous at $v=x$ uniformly in $w$ over the support of $Q_0$;
\item[(B3d)] $v \mapsto H_0(v, w)$ is nondecreasing for all $w$ and differentiable at each $v \in B_{\delta}(x)$, with derivative $H'_0(v,w)$ continuous at $v=x$ uniformly in $w$ over the support of $Q_0$.
\end{description}
\end{description}
Representation \eqref{eq:cov_decomp} is deliberately broad to encompass a wide variety of parameters. Nevertheless, in many settings, the covariance function can be considerably simplified, leading then to simpler conditions in (B3). For instance, when  $W$ is a vector of covariates over which marginalization is performed to compute the parameter, $Q_0$ typically plays the role of the marginal distribution of $W$ under $P_0$. In classical problems in which there is no adjustment for covariates, this feature of representation \eqref{eq:cov_decomp} is not needed and indeed vanishes. In other settings, $A_0(s,t,v,w)$ depends on $v$ and $w$ but not on $s$ and $t$.

Finally, we must ensure that the stochastic remainder terms $H_{x,n}$ and $R_{x,n}$ arising the asymptotic linear representations of $\Gamma_n$ and $\Phi_n$ do not contribute to the limit distribution. Defining $\tilde{H}_{u,n} := H_{x+u,n} - H_{x,n}$, $\tilde{R}_{u,n} := R_{x+u,n} - R_{x,n}$ and
$K_n(\delta) := n^{2/3}\sup_{|u| \leq \delta n^{-1/3}}|\tilde{H}_{u,n} - \theta_0(x) \tilde{R}_{u,n}|$, we consider the following conditions for the asymptotic negligibility of these remainder terms:
\begin{description}[style=multiline,leftmargin=1cm]
\item[(B4)] $K_n(\delta) \inprob 0$ for each fixed $\delta>0$;
\item[(B5)] for some $\alpha\in(1,2)$, $\delta \mapsto \delta^{-\alpha}E_{0} \left[K_n(\delta)\right]$ is decreasing for all $\delta$ small enough and $n$ large enough.
\end{description}
Condition (B4) guarantees that the remainder terms do not contribute to the weak convergence of $\{W_{n,x}(u) : |u| \leq M\}$, and condition (B5) guarantees that the remainder terms satisfy condition (A3).

Combining the conditions above, we can state the following master theorem for pointwise convergence in distribution when the monotone estimator is based upon asymptotically linear primitive and transformation estimators:
\begin{thm}\label{dist_asy_lin}
Suppose that, at an interior point $x\in I$, $\theta_0$ is differentiable and $\Phi_0$ is continuously differentiable with positive derivative. Suppose also that $\Gamma_n$ and $\Phi_n$ satisfy display~\eqref{eq:asy_linear}, and that conditions (B1)--(B5) and (A4)--(A5) hold (with $c_n = n^{1/3}$). Then, it holds that \[n^{1/3} \left[ \theta_n(x) - \theta_0(x) \right] \indist  \tau_0(x) Z\ ,\] where $Z$ follows the standard Chernoff distribution, and $\tau_0(x):= \left[4\theta_0'(x)\kappa_0(x)/\Phi_0'(x)^{2}\right]^{1/3}$ is a scale factor involving $\kappa_0(x) := \int A_0(x,x,x, w) H_0'(x,w)Q_0(dw)$.
\end{thm}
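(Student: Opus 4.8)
The plan is to deduce Theorem~\ref{dist_asy_lin} from Theorem~\ref{thm:conv_in_dist} by verifying conditions (A1), (A2) and (A3) for the localized process $W_{n,x}$ arising from the asymptotically linear representations, with $\alpha = 1$, $c_n = r_n = n^{1/3}$, and then computing the scale of the Brownian limit. First I would write $W_{n,x}$ explicitly in terms of the empirical process: substituting \eqref{eq:asy_linear} into the definition of $W_{n,x}$, and using that the linear parts combine into $\d{P}_n g_{x, uc_n^{-1}}$ with $g_{x,v} := D_{x+v,0}^* - D_{x,0}^* - \theta_0(x)(L_{x+v,0}^* - L_{x,0}^*)$, we get
\[
W_{n,x}(u) = c_n^{2}\,\d{P}_n g_{x, uc_n^{-1}} + c_n^{2}\left[\tilde H_{u,n} - \theta_0(x)\tilde R_{u,n}\right] = n^{1/6}\,\d{G}_n g_{x, n^{-1/3} u} + n^{1/2} P_0 g_{x, n^{-1/3} u} + c_n^2\left[\tilde H_{u,n} - \theta_0(x)\tilde R_{u,n}\right].
\]
Condition (B4) kills the last (remainder) term in the sense of finite-dimensional and sup-over-compacts convergence. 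The drift term $n^{1/2} P_0 g_{x, n^{-1/3}u}$: since $\theta_0$ is differentiable at $x$ and $D^*$, $L^*$ generate $\Gamma_0$, $\Phi_0$, one has $P_0 g_{x,v} = [\Gamma_0(x+v) - \Gamma_0(x)] - \theta_0(x)[\Phi_0(x+v) - \Phi_0(x)] = \int_0^v [\theta_0(x+s) - \theta_0(x)]\Phi_0'(x)\,ds + o(v^2)$, which at $v = n^{-1/3}u$ scales to $\tfrac12\theta_0'(x)\Phi_0'(x)u^2 + o(1)$; so the drift already matches the $\pi_0(x)\Phi_0'(x)(\alpha+1)^{-1}|u|^{\alpha+1}$ term of Theorem~\ref{thm:conv_in_dist} with $\pi_0(x) = \theta_0'(x)$.

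The core step is the weak convergence of the stochastic term $u \mapsto n^{1/6}\d{G}_n g_{x,n^{-1/3}u}$ to a Brownian motion on each $[-M,M]$. This I would obtain by invoking the machinery of \cite{kim1990cube} (their Theorem on triangular-array empirical processes indexed by shrinking classes): condition (B1) supplies the entropy/manageability hypothesis in either the bracketing or uniform-covering form, condition (B2) is exactly their moment/envelope condition (vi) controlling $P_0 G_{x,R}^2$ and the Lindeberg-type truncation, and conditions (B3a)--(B3d) together with the decomposition \eqref{eq:cov_decomp} give convergence of the rescaled covariance $\sigma_{x,n^{-1/3}}(u,v)$ to $\kappa_0(x)\,\sigma^2(u,v)$ with $\sigma^2$ the two-sided Brownian covariance and $\kappa_0(x) = \int A_0(x,x,x,w)H_0'(x,w)Q_0(dw)$ --- here one differentiates the integral term of \eqref{eq:cov_decomp} in $s$ using (B3b) and takes $s,t \to x$ using (B3c) and (B3d), the differentiable part $\Sigma_0^*$ contributing nothing by (B3a). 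Tightness and the switch from finite-dimensional convergence to $\ell^\infty[-M,M]$ convergence again come from the Kim--Pollard bounds. Adding the deterministic drift and citing (B4) for the remainder, we obtain (A1) with $W_x = [\kappa_0(x)]^{1/2}W_0$ a scaled two-sided Brownian motion, which in particular has stationary increments and lower semicontinuous (indeed continuous) paths.

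It remains to check (A2) and (A3). For (A2): since $W_x$ is a scaled Brownian motion, $\sup_{|v|\ge m}|W_x(v)||v|^{-2} \to 0$ a.s.\ by the law of the iterated logarithm, which gives the sufficient tail condition for (A2) noted in the text after Theorem~\ref{thm:conv_in_dist}. For (A3): the relevant expectation $E_0[\sup_{|u|\le c_n\delta}|W_{n,x}(u)|]$ splits into the empirical-process piece, whose modulus is controlled by a maximal inequality under (B1)--(B2) yielding a bound of the form $f_n(c_n\delta)$ with $f_n(1) = O(1)$ and $u\mapsto u^{-\beta}f_n(c_n u)$ decreasing for a suitable $\beta \in (1,2)$ (this is where the envelope scaling $P_0 G_{x,R}^2 = O(R)$ in (B2) feeds in), plus the drift piece $O((c_n\delta)^2/c_n^2) \cdot c_n^2 = O(c_n^2\delta^2)$ which is handled by the same $\beta < 2$ bookkeeping, plus the remainder piece controlled directly by (B5). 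Finally, (A4) and (A5) are assumed outright. Having verified (A1)--(A5), the first display of Theorem~\ref{thm:conv_in_dist} gives the GCM-slope limit; since $\alpha = 1$, $\pi_0(x) = \theta_0'(x)$ and $W_x$ has stationary increments, the second display gives the $\argmin$ form; and since $W_x = [\kappa_0(x)]^{1/2}W_0$, the third display yields $n^{1/3}[\theta_n(x) - \theta_0(x)] \indist \tau_0(x)Z$ with $\tau_0(x) = [4\theta_0'(x)\kappa_0(x)/\Phi_0'(x)^2]^{1/3}$, as claimed.

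The main obstacle I anticipate is the rigorous verification that the conditions (B1)--(B3) are precisely what the Kim--Pollard theorem needs in this generalized setting --- in particular, reconciling (B1)'s entropy formulation with their ``uniform manageability'' hypothesis, and carefully extracting the covariance limit $\kappa_0(x)$ from \eqref{eq:cov_decomp} under (B3a)--(B3d) (the interchange of differentiation and the double integral, and the uniform-in-$w$ continuity needed to pass $s,t\to x$ inside the integral). The drift and remainder bookkeeping for (A3), matching the exponent $\beta$, is also delicate but routine once the maximal inequality is in hand.
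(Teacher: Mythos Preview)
Your plan mirrors the paper's proof: verify (A1)--(A3) by empirical process methods and invoke Theorem~\ref{thm:conv_in_dist}. The paper uses Theorems 2.11.22/2.11.23 of \cite{van1996weak} (rather than Kim--Pollard directly, though the content is the same) for the weak convergence of $u\mapsto n^{1/6}\d{G}_n g_{x,un^{-1/3}}$, extracts $\kappa_0(x)$ from (B3) by splitting $\Sigma_0$ into a smooth part $\Sigma_0^*+\bar\Sigma_0$ and the non-smooth part $\tilde\Sigma_0$, and uses Theorems 2.14.1/2.14.2 of the same reference for the maximal inequality yielding $f_n(c_n\delta)=O((c_n\delta)^{1/2})$ in (A3).

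One correction is needed. Your ``drift term'' is spurious: since $P_0 D^*_{y,0}=P_0 L^*_{y,0}=0$ for every $y$ by the paper's convention on influence functions, $P_0 g_{x,v}\equiv 0$, and hence
\[
W_{n,x}(u)=n^{1/6}\,\d{G}_n g_{x,un^{-1/3}}+n^{2/3}\bigl[\tilde H_{u,n}-\theta_0(x)\tilde R_{u,n}\bigr]
\]
with no deterministic piece (note also the coefficient is $c_n^{2}=n^{2/3}$, not $n^{1/2}$). The quadratic $\tfrac12\theta_0'(x)\Phi_0'(x)u^2$ you are trying to recover is not part of $W_{n,x}$ at all; it enters inside the proof of Theorem~\ref{thm:conv_in_dist} through the deterministic term $M_{0,x}$ (their $R_{n,3}$), so your task for (A1) is only to produce the centered Gaussian limit $[\kappa_0(x)]^{1/2}W_0$. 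This also means the ``drift piece'' in your (A3) bookkeeping can be dropped. With that fix, your argument and the paper's coincide.
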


\subsection{Effect of domain transform on limit distribution}


As was done briefly after Theorem \ref{thm:conv_in_dist}, it is natural to compare the limit distribution obtained by Theorem \ref{dist_asy_lin} when a transformation of the domain is used and when it is not. We will consider $\theta_n := \n{Iso}_{I}( \Theta_n, \n{Id})$, the estimator obtained by directly isotonizing an estimator $\Theta_n$ of the primitive function $\Theta_0$ without use of a domain transformation. Denoting by $\Phi_0$ a candidate non-decreasing transformation function, and letting $\Gamma_0 := \Psi_0 \circ \Phi_0$ be as described in Section \ref{sec:grenander}, we will also consider $\theta_n^* := \n{Iso}_{J_n}(\Gamma_n \circ \Phi_n^-, \Phi_n)$, where $\Gamma_n$ and $\Phi_n$ are estimators of $\Gamma_0$ and $\Phi_0$, respectively. Suppose $\Theta_n(x), \Gamma_n(x)$ and $\Phi_n(x)$ are each asymptotically linear estimators of their respective targets with influence functions $M_{x,0}^*$, $D_{x,0}^*$ and $L_{x,0}^*$, respectively, under sampling from $P_0$.

We wish to compare the scale parameters $\kappa_0(x)$ and $\kappa_0^*(x)$ arising from the use of the distinct estimators $\theta_n(x)$ and $\theta_n^*(x)$. To do so, we can use expression (B3) to examine the covariance obtained in both cases. However, it appears difficult to say much without having more specific forms for the involved influence functions. Unfortunately, it also appears difficult to characterize these influence functions generally since they depend inherently on the parameter of interest $\theta_0$, and we wish to remain agnostic to the form of $\theta_0$. Nevertheless, in our next result, we describe a class of problems, characterized by the generated influence functions and regularity conditions on these, in which domain transformation has no effect on the limit distribution of the generalized Grenander-type estimator.
\begin{thm}\label{thm:transform}
Suppose conditions (B1)--(B5) hold for $(\Theta_n, \n{Id})$ and $(\Gamma_n, \Phi_n)$, and the observed data unit can be partitioned as $O=(U, Z)$ with $U\in\d{R}^+$. Suppose that the influence functions can be expressed as
\begin{align*}
M_{x,0}^*: &\ (u,z)\mapsto I_{[0,x]}(u) M_{x,0}^{(1)}(u,z) + M_{x, 0}^{(2)}(u,z)\ ,\\ 
L_{x,0}^*: &\ (u,z)\mapsto I_{[0,x]}(u) L_{x,0}^{(1)}(u,z) + L_{x,0}^{(2)}(u,z)\ , \\
D_{x,0}^*: &\ (u,z)\mapsto I_{[0,x]}(u) \Phi_0'(u)M_{x,0}^{(1)}(u,z) + D_{x, 0}^{(2)}(u,z) +\int_0^x \theta_0(v) L_{dv,0}^*(u,z)\ ,
\end{align*}
and satisfy the smoothness conditions stated in the Appendix. Suppose that the density function $h_0$ of the conditional distribution of $U$ given $Z$ exists and is continuous in a neighborhood of $x$ uniformly over the support of the marginal distribution $Q_{Z,0}$ of $Z$. Then, it follows that \[\kappa_0(x)=\int \left[M^{(1)}_{x,0}(x,z)\right]^2 h_0(x \mid z)Q_{Z,0}(dz)\ \ \ \mbox{and}\ \ \ \kappa_0^*(x)=\left[\Phi_0'(x)\right]^2\int \left[M_{x,0}^{(1)}(x,z)\right]^2 h_0(x \mid z) Q_{Z,0}(dz)\ .\] Consequently, $n^{1/3}\left[\theta_n(x) - \theta_0(x)\right]$ and $n^{1/3}\left[\theta_n^*(x) - \theta_0(x)\right]$ have the same limit distribution.
\end{thm}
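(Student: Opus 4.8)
The plan is to produce, for each of the two estimators, an explicit version of the covariance decomposition~\eqref{eq:cov_decomp}, read off the scale parameter through Theorem~\ref{dist_asy_lin}, and then compare the resulting Chernoff scale factors. Write $O=(U,Z)$ with $U\in\d{R}^+$, let $h_0(\cdot\mid z)$ denote the conditional density of $U$ given $Z=z$ and $Q_{Z,0}$ the marginal law of $Z$, and record the elementary identity that, for any functions $a$ and $b$, $P_0\bigl[I_{[0,s]}(U)a(U,Z)\,I_{[0,t]}(U)b(U,Z)\bigr]=\iint_0^{s\wedge t}a(v,z)b(v,z)h_0(v\mid z)\,dv\,Q_{Z,0}(dz)$. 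This is exactly the non-differentiable ``$s\wedge t$'' piece in~\eqref{eq:cov_decomp}, whereas a product in which at most one factor carries an indicator $I_{[0,\cdot]}(U)$ produces an integral with a single moving endpoint, which is differentiable in $(s,t)$ near $(x,x)$ under the Appendix smoothness conditions and therefore lands in the $\Sigma^*_0$ slot. The whole argument thus reduces to isolating, in each case, the term that contributes the $s\wedge t$ kink.

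For $\theta_n=\n{Iso}_I(\Theta_n,\n{Id})$ the transformation estimator is the deterministic identity, so the relevant Gaussian kernel of Theorem~\ref{dist_asy_lin} is $\Sigma_0(s,t)=P_0[M^*_{s,0}M^*_{t,0}]$. Inserting $M^*_{x,0}(u,z)=I_{[0,x]}(u)M^{(1)}_{x,0}(u,z)+M^{(2)}_{x,0}(u,z)$ and expanding, only the product of the two indicator terms carries an $s\wedge t$, contributing $\iint_0^{s\wedge t}M^{(1)}_{s,0}(v,z)M^{(1)}_{t,0}(v,z)h_0(v\mid z)\,dv\,Q_{Z,0}(dz)$, while the other three terms form $\Sigma^*_0$. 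Hence~\eqref{eq:cov_decomp} holds with $Q_0=Q_{Z,0}$, $H_0(dv,z)=h_0(v\mid z)\,dv$, $A_0(s,t,v,z)=M^{(1)}_{s,0}(v,z)M^{(1)}_{t,0}(v,z)$; one verifies (B3a)--(B3d) from the Appendix conditions, and Theorem~\ref{dist_asy_lin} yields $\kappa_0(x)=\int A_0(x,x,x,z)H_0'(x,z)Q_{Z,0}(dz)=\int[M^{(1)}_{x,0}(x,z)]^2h_0(x\mid z)Q_{Z,0}(dz)$.

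For $\theta^*_n=\n{Iso}_{J_n}(\Gamma_n\circ\Phi_n^-,\Phi_n)$ the relevant kernel is $\Sigma^*_0(s,t)=P_0\bigl[(D^*_{s,0}-\theta_0(x)L^*_{s,0})(D^*_{t,0}-\theta_0(x)L^*_{t,0})\bigr]$, so the key step is to rewrite $D^*_{s,0}-\theta_0(x)L^*_{s,0}$ in the same ``indicator plus smooth'' form. Using the assumed expression for $D^*_{s,0}$, Stieltjes integration by parts on $\int_0^s\theta_0(v)L^*_{dv,0}(u,z)$ — valid since $v\mapsto L^*_{v,0}(u,z)$ has bounded variation and $L^*_{0,0}\equiv0$, because $\Phi_n(0)=\Phi_0(0)=0$ — gives $\int_0^s\theta_0(v)L^*_{dv,0}=\theta_0(s)L^*_{s,0}-\int_0^s L^*_{v,0}\theta_0'(v)\,dv$; substituting $L^*_{v,0}(u,z)=I_{[0,v]}(u)L^{(1)}_{v,0}(u,z)+L^{(2)}_{v,0}(u,z)$ and using $\int_0^s I_{[0,v]}(u)L^{(1)}_{v,0}(u,z)\theta_0'(v)\,dv=I_{[0,s]}(u)\int_u^s L^{(1)}_{v,0}(u,z)\theta_0'(v)\,dv$ then yields
\[
D^*_{s,0}-\theta_0(x)L^*_{s,0}=I_{[0,s]}(u)\,\t{M}^{(1)}_{s,0}(u,z)+\t{M}^{(2)}_{s,0}(u,z),
\]
where $\t{M}^{(1)}_{s,0}(u,z)=\Phi_0'(u)M^{(1)}_{s,0}(u,z)+[\theta_0(s)-\theta_0(x)]L^{(1)}_{s,0}(u,z)-\int_u^s L^{(1)}_{v,0}(u,z)\theta_0'(v)\,dv$ and $\t{M}^{(2)}_{s,0}$ collects the remaining, differentiable-in-$s$ part. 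Since the extra terms in $\t{M}^{(1)}_{s,0}$ vanish at $s=x$, one has $\t{M}^{(1)}_{x,0}(x,z)=\Phi_0'(x)M^{(1)}_{x,0}(x,z)$, so applying the previous paragraph with $\t{M}^*$ in place of $M^*$ gives $\kappa^*_0(x)=\int[\t{M}^{(1)}_{x,0}(x,z)]^2h_0(x\mid z)Q_{Z,0}(dz)=[\Phi_0'(x)]^2\kappa_0(x)$. By Theorem~\ref{dist_asy_lin} (whose hypotheses (A4)--(A5) are implied by the asymptotic-linearity assumption), $n^{1/3}[\theta_n(x)-\theta_0(x)]\indist[4\theta_0'(x)\kappa_0(x)]^{1/3}Z$, as $\Phi_0'\equiv1$ here, while $n^{1/3}[\theta^*_n(x)-\theta_0(x)]\indist[4\theta_0'(x)\kappa^*_0(x)/\Phi_0'(x)^2]^{1/3}Z=[4\theta_0'(x)\kappa_0(x)]^{1/3}Z$, the same law.

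I expect the main obstacle to be the bookkeeping needed to certify that every term other than the genuine $s\wedge t$ piece is differentiable in $(s,t)$ on a neighborhood of $(x,x)$ and satisfies (B3a)--(B3d): this covers the cross products $M^{(1)}_{s,0}\times M^{(2)}_{t,0}$ and $M^{(2)}_{s,0}\times M^{(2)}_{t,0}$ (with their single moving limits $\int_0^s$), the boundary-type term $[\theta_0(s)-\theta_0(x)]L^{(1)}_{s,0}$ (which vanishes at $s=x$ but must still be controlled near it), and the iterated integral $\int_u^s L^{(1)}_{v,0}\theta_0'(v)\,dv$, and it requires justifying the Fubini exchange and the Stieltjes integration by parts — which is precisely what the regularity conditions deferred to the Appendix supply. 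The remainder is a direct, if lengthy, computation.
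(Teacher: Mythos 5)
Your proposal is correct and follows essentially the same route as the paper's own proof: expand the relevant covariance kernel, isolate the non-differentiable $\int_0^{s\wedge t}$ piece as $A_0(s,t,v,z)H_0(dv,z)Q_{Z,0}(dz)$, and for the transformed estimator use integration by parts on $\int_0^s\theta_0(v)L^*_{dv,0}$ to recast $D^*_{s,0}-\theta_0(x)L^*_{s,0}$ in ``indicator times kernel plus smooth'' form, whose kernel at $s=t=u=x$ reduces to $\Phi_0'(x)M^{(1)}_{x,0}(x,z)$. Your $\t{M}^{(1)}_{s,0}$ is exactly the paper's $\Upsilon_{1,s,x}$, and the remaining bookkeeping you flag is precisely what the Appendix regularity conditions are there to supply.
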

The forms of $M^*_{x,0}$ and $L^*_{x,0}$ arise naturally in a wide variety of settings because the parameters considered involve a primitive function. The supposed form of $D^*_{x,0}$ may seem restrictive at first glance but is in fact expected given the forms of $M^*_{x,0}$ and $L^*_{x,0}$. A heuristic justification based on the product rule for differentiation is provided in the Supplementary Material. In all the examples we study in Section \ref{sec:examples}, the conditions of Theorem~\ref{thm:transform} apply. This provides justification for why, in each of these examples, the use of a domain transform has no impact on the limit distribution.

We remind the reader that, even if the domain transformation has no impact on the pointwise limit distribution, use of a domain transformation is still of great practical value in many circumstances. In complex problems, an estimator $\Theta_n$ may not be readily available for the primitive parameter $\Theta_0$ obtained without the use of a domain transformation. In some cases, $\Theta_0$ may not even be well-defined, so that transformation of the domain is unavoidable. Even when $\Theta_0$ is well-defined and  an estimator $\Theta_n$ is available, with the use of a carefully chosen transformation, it may be possible to avoid the need to estimate certain nuisance parameters or to substantially simplify the verification of conditions (B1)--(B5). Examples of these phenomena are presented in Section~\ref{sec:examples}.

\subsection{Negligibility of remainder terms}\label{remainder}

In some applications, the estimators $\Gamma_n$ and $\Phi_n$ may be linear rather than simply asymptotically linear. In such situations, the remainder terms $H_{x,n}$ and $R_{x,n}$ are identically zero, and conditions (B4) and (B5) are trivially satisfied. Otherwise, these conditions must be  verified. While in general the exact form of these remainder terms depends upon the specific parameter under consideration and estimators used, it is frequently the case that part of the remainder is an empirical process term arising from the estimation of nuisance functions appearing in the influence functions $D_{x,0}^*$ and $L_{x,0}^*$, as we illustrate below with one particular construction. To facilitate the verification of conditions (B4) and (B5) for these empirical process terms, we outline sufficient conditions in terms of uniform entropy and bracketing numbers.

In this subsection, we assume that $\Gamma_0(x)$ and $\Phi_0(x)$ arise as the evaluation at $P_0$ of maps from $\mathscr{M}$ to $\d{R}$, and denote by $\Gamma_P(x)$ and $\Phi_P(x)$ the evaluation of these maps at an arbitrary $P\in\mathscr{M}$. Let $\pi=\pi(P)$ be a summary of $P$, and suppose that $\Gamma_P(x)$, $\Phi_P(x)$ and the nonparametric efficient influence functions of $P\mapsto \Gamma_P(x)$ and $P\mapsto \Phi_P(x)$ at $P$ each only depend on $P$ through $\pi$. Denote these efficient influence functions by $D_{x}^*(\pi)$ and $L_{x}^*(\pi)$, respectively. Since $\mathscr{M}$ is nonparametric, it must be that $D^*_{x,0}=D^*_{x}(\pi_0)$ and $L^*_{x,0}=L^*_x(\pi_0)$ for $\pi_0:=\pi(P_0)$. To emphasize the fact that $\Gamma_P(x)$ and $\Phi_P(x)$ depend on $P$ only through $\pi$, we will use the symbols $\Gamma_\pi(x)$ and $\Phi_\pi(x)$ to refer to $\Gamma_P(x)$ and $\Phi_P(x)$, respectively.

Under regularity conditions, the so-called \emph{one-step estimators}
 \begin{equation}\label{onestep} \Gamma_n(x):=\Gamma_{\pi_n}(x)+\d{P}_nD^*_x(\pi_n)\mbox{\ \ \ \ and\ \ \ \ }\Phi_n(x):=\Phi_{\pi_n}(x)+\d{P}_nL^*_{x}(\pi_n) \end{equation} are asymptotically linear and efficient estimators of $\Gamma_0(x)$ and $\Phi_0(x)$, even when $\pi_n$ is a data-adaptive (e.g., machine learning) estimator of $\pi_0$ (e.g., \citealp{pfanzagl1982}). \cite{van2006estimating} pioneered the use of such one-step estimators in the context of nonparametric monotone function estimation. When this one-step construction is used, it can be shown that the remainder terms have the form $H_{x,n}=H^{(1)}_{x,n}+H^{(2)}_{x,n}$ and  $R_{x,n}=R^{(1)}_{x,n}+R^{(2)}_{x,n}$, where $H^{(1)}_{x,n}:=(\d{P}_n-P_0)\left[D^*_{x}(\pi_n)-D^*_{x}(\pi_0)\right]$ and $R^{(1)}_{x,n}:=(\d{P}_n-P_0)\left[L^*_{x}(\pi_n)-L^*_{x}(\pi_0)\right]$ are empirical process terms, and $H^{(2)}_{x,n}$ and $R^{(2)}_{x,n}$ are so-called \emph{second-order} remainder terms arising from linearization of the corresponding parameter. Similar representations exist when other constructive approaches, such as gradient-based estimating equations methodology (e.g., \citealp{vanderlaan2003,tsiatis2007}) and targeted maximum likelihood estimation (e.g., \citealp{vanderlaan2011tmle}), are used. As we will see in the examples of Section \ref{sec:examples}, these second-order terms can usually be shown to be asymptotically negligible provided $\pi_n$ tends to $\pi_0$ fast enough in some appropriate norm. Here, we provide conditions on $\pi_n$ that ensure that the contribution of $H_{x,n}^{(2)}-\theta_0(x) R_{x,n}^{(2)}$ to $K_n(\delta)$ satisfies conditions (B4) and (B5).

A primary benefit of decomposing the remainder terms as above is that the empirical process terms can be controlled using empirical process theory, a strategy also used in \cite{van2006estimating}. In particular, we can provide conditions under which $H^{(1)}_{x,n}$ and $R^{(1)}_{x,n}$ satisfy conditions (B4) and (B5). Defining $g_{x,u}(\pi) := [D_{x + u}^*(\pi) - D_{x}^*(\pi)] - \theta_0(x)[L_{x + u}^*(\pi) -L_{x}^*(\pi)]$, the relevant contribution of these empirical process terms to $K_n(\delta)$ is   
\begin{equation*}K^{(1)}_n(\delta) := n^{1/6}\sup_{|u| \leq \delta } \left|\d{G}_n \left[g_{x,un^{-1/3}}(\pi_n) - g_{x,un^{-1/3}}(\pi_0)\right] \right|\ .\label{eq:local_empirical}\end{equation*} Suppose that $\pi_n$ falls in a semimetric space $(\mathscr{P}, \rho)$ , with probability tending to one, and that $G_{x,\mathscr{P}, R}$ is an envelope function for $\s{G}_{ x,\s{P}, R} := \{ g_{x,u}(\pi): |u| \leq R , \pi \in \mathscr{P} \}$. We consider the following the conditions:
\begin{description}[style=multiline,leftmargin=1cm]
\item[(C1)] for some constants $C>0$ and $V>-1$, either (C1a) $\log N_{[]}(\varepsilon \|G_{x,\s{P},R}\|_{P_0,2}, \s{G}_{x,\s{P},R}, L_2(P_0)) \leq C\varepsilon^{2V}$ or (C1b) $\log \sup_Q N(\varepsilon \|G_{x,\s{P},R}\|_{Q,2}, \s{G}_{x,\s{P},R}, L_2(Q)) \leq  C\varepsilon^{2V}$ for all $\varepsilon \in (0,1]$ and $R$ small enough;
\item[(C2)] $P_0 G_{x,\s{P},R}^2 =O(R)$, and for all $\eta>0$, $P_0G^2_{x,\s{P},R}\{RG_{x,\s{P},R}>\eta\}=o(R)$, as $R\rightarrow 0$;
\item[(C3)] $P_0\left[ g_{x,u} (\pi) - g_{x,v}(\pi)\right]^2 =O(|u-v|)$ uniformly for $\pi\in\s{P}$, and $P_0\left[g_{x,u}(\pi_1) - g_{x,u}(\pi_2)\right]^2/\rho(\pi_1, \pi_2)^2 =O(|u|)$ uniformly for $\pi_1,\pi_2\in\s{P}$ and $u\in I$;
\item[(C4)]  there exists some $\bar{\pi}\in \s{P}$ such that $\rho(\pi_n, \bar{\pi}) \inprob 0$.
\end{description}
Our next result states that, under these conditions, the remainder term $K^{(1)}_n(\delta)$ stated above is asymptotically negligible in the sense of conditions (B4) and (B5).
\begin{thm}\label{local_empirical}
Suppose that, with probability tending to one, $\pi_n \in \s{P}$ and conditions (C1)--(C4) hold. Then, $K^{(1)}_n(\delta)$ satisfies conditions (B4)--(B5).
\end{thm}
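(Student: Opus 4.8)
The plan is to view $K^{(1)}_n(\delta)$ as the supremum of a localized empirical process indexed by a class of ``difference'' functions and to control it by a maximal inequality, exploiting simultaneously the shrinking localization window $|u|\le\delta n^{-1/3}$ and the vanishing nuisance estimation error. First I would replace the randomly indexed process by a deterministically indexed one: since $\pi_n\in\mathscr{P}$ with probability tending to one and, by (C4), $\rho(\pi_n,\bar\pi)\inprob 0$ for some $\bar\pi\in\mathscr{P}$ -- which in the settings of interest equals $\pi_0$ because $\pi_n$ estimates $\pi_0$, and to which I reset $\pi_n$ off the good event without affecting any asymptotic statement -- there is a deterministic sequence $\varepsilon_n\downarrow 0$ with $P_0(\rho(\pi_n,\pi_0)>\varepsilon_n)\to 0$, and, after replacing $\varepsilon_n$ by $\max(\varepsilon_n,n^{-\eta})$ for a small fixed $\eta>0$, one may assume $\varepsilon_n$ does not decay too fast. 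On the good events,
\[ K^{(1)}_n(\delta)\ \le\ n^{1/6}\sup_{f\in\mathcal{D}_{n,\delta,\varepsilon_n}}\bigl|\mathbb{G}_n f\bigr|,\qquad \mathcal{D}_{n,\delta,\varepsilon} := \bigl\{\, g_{x,un^{-1/3}}(\pi)-g_{x,un^{-1/3}}(\pi_0)\ :\ |u|\le\delta,\ \pi\in\mathscr{P},\ \rho(\pi,\pi_0)\le\varepsilon\,\bigr\}, \]
so it suffices to bound $n^{1/6}E_0\sup_{f\in\mathcal{D}_{n,\delta,\varepsilon_n}}|\mathbb{G}_n f|$, together with the analogous quantity over the larger class in which the constraint $\rho(\pi,\pi_0)\le\varepsilon_n$ is dropped.

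Next I would assemble the three inputs to the maximal inequality. Each $\mathcal{D}_{n,\delta,\varepsilon}$ is a subclass of $\mathscr{G}_{x,\mathscr{P},\delta n^{-1/3}}-\mathscr{G}_{x,\mathscr{P},\delta n^{-1/3}}$ and hence has envelope $F_{n,\delta}:=2\,G_{x,\mathscr{P},\delta n^{-1/3}}$; by (C2), $\|F_{n,\delta}\|_{P_0,2}^2=O(\delta n^{-1/3})$ as $\delta n^{-1/3}\to0$, and the Lindeberg-type clause of (C2) supplies the truncation control $P_0 F_{n,\delta}^2\{\delta n^{-1/3}F_{n,\delta}>\eta\}=o(\delta n^{-1/3})$ needed to discard the remainder term of the maximal inequality under the cube-root scaling. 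By (C3), every $f\in\mathcal{D}_{n,\delta,\varepsilon}$ obeys $\|f\|_{P_0,2}^2 = P_0[g_{x,un^{-1/3}}(\pi)-g_{x,un^{-1/3}}(\pi_0)]^2 = O(\delta n^{-1/3}\,\rho(\pi,\pi_0)^2)=O(\delta n^{-1/3}\varepsilon^2)$, so the relevant size ratio $\sigma/\|F_{n,\delta}\|_{P_0,2}$ is of order $\varepsilon$. Finally, by (C1), the bracketing entropy (under (C1a)) or uniform entropy (under (C1b)) of the difference class is at most twice that of $\mathscr{G}_{x,\mathscr{P},\delta n^{-1/3}}$, hence bounded by $2C\varepsilon^{2V}$ for $\varepsilon\in(0,1]$ and $n$ large; since $V>-1$, the associated entropy integral $J(\eta)=\int_0^\eta\sqrt{1+2C\varepsilon^{2V}}\,d\varepsilon$ is finite and of order $\eta^{(V+1)\wedge1}$ as $\eta\to0$.

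With these in hand I would invoke a refined maximal inequality for empirical processes (the bracketing form under (C1a), the uniform-entropy form under (C1b); see \cite{van1996weak} and the maximal inequality of \cite{kim1990cube}) to get $E_0\sup_{f\in\mathcal{D}_{n,\delta,\varepsilon_n}}|\mathbb{G}_n f|\lesssim\varepsilon_n^{(V+1)\wedge1}(\delta n^{-1/3})^{1/2}$, the extra lower-order factor $J(\varepsilon_n)\|F_{n,\delta}\|_{P_0,2}/(\varepsilon_n^2\|F_{n,\delta}\|_{P_0,2}^2 n^{1/2})=O(\varepsilon_n^{(V+1)\wedge1-2}\delta^{-1/2}n^{-1/3})$ in the refined bound being $o(1)$ for the chosen $\varepsilon_n$. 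Multiplying by $n^{1/6}$ gives $E_0[K^{(1)}_n(\delta)]\lesssim\varepsilon_n^{(V+1)\wedge1}\delta^{1/2}\to0$ for each fixed $\delta$, which with the good events having probability tending to one and Markov's inequality yields condition (B4). For condition (B5), I would rerun the maximal inequality over the larger class (all $\pi\in\mathscr{P}$), whose size ratio is only $O(1)$ but whose members still have $L_2(P_0)$-norm $O((\delta n^{-1/3})^{1/2})$; this produces $E_0[K^{(1)}_n(\delta)]\le C(\delta^{1/2}+n^{-1/3})$ with $C$ free of $\delta$ and $n$ for $\delta$ small and $n$ large -- a bound of the form $f^{(1)}_n(\delta)$ for which $\delta\mapsto\delta^{-\alpha}f^{(1)}_n(\delta)$ is decreasing near $0$ for any $\alpha\in(1,2)$, which is the monotone-after-normalization envelope required for (B5) and its downstream use in verifying condition (A3).

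The main obstacle is the borderline scaling: the localization forces the relevant functions to have $L_2(P_0)$-norm of order $n^{-1/6}$, exactly cancelled by the prefactor $n^{1/6}$, so a crude bound yields only $K^{(1)}_n(\delta)=\bounded(\delta^{1/2})$ rather than $\fasterthan(1)$. The $\fasterthan(1)$ conclusion is recovered only through the extra factor $\varepsilon_n\to0$ supplied by consistency of the nuisance estimator via (C4) and converted into $L_2(P_0)$-smallness of the influence-function differences by the modulus-of-continuity bound in (C3); making these combine correctly is the crux, and it is precisely here that $V>-1$ is needed so that both the entropy integral converges and the lower-order term of the refined maximal inequality is negligible -- which is exactly what conditions (C1) and (C2) are designed to deliver. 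A secondary technical point is tracking the $\delta$-dependence of the bound with enough care to exhibit the monotone envelope demanded by (B5), and confirming that the Lindeberg half of (C2) genuinely removes the truncation remainder of the maximal inequality under the cube-root rate.
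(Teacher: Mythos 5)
Your proposal is correct in substance and rests on exactly the same inputs ((C1)--(C3) for entropy and variance control, (C4) for consistency of $\pi_n$), but it reaches condition (B4) by a genuinely different route than the paper. The paper treats $\{\d{G}_n\, n^{1/6} g_{x,un^{-1/3}}(\pi) : |u|\le\delta,\ \pi\in\s{P}\}$ as a single process indexed by the product semimetric space, verifies the hypotheses of Theorems 2.11.1/2.11.9 of \cite{van1996weak} (envelope moment, Lindeberg truncation, stabilization of $\sup_{|u-v|,\rho(\pi_1,\pi_2)<\delta_n}P_0(\cdot)^2$, entropy integral) to conclude asymptotic $\bar\rho$-equicontinuity, and then applies an auxiliary lemma (Lemma 4 of the Supplement) converting equicontinuity plus $\rho(\pi_n,\bar\pi)\inprob 0$ into $\sup_u|V_n(u,\pi_n)-V_n(u,\bar\pi)|\inprob 0$. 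You instead deterministically localize $\pi_n$ to a shrinking $\rho$-ball of radius $\varepsilon_n$ and apply a local maximal inequality to the difference class, obtaining the quantitative bound $\varepsilon_n^{(V+1)\wedge1}\delta^{1/2}$. Your route buys an explicit rate (more than (B4) requires), at the cost of needing the refined, small-$\sigma$ form of the maximal inequality for classes with an \emph{unbounded, $n$-dependent} envelope; under (C1a) this can be pushed through via Lemma 3.4.2/3.4.3 of \cite{van1996weak} with the truncation remainder killed by the Lindeberg clause of (C2), but under (C1b) the localized uniform-entropy maximal inequality is genuinely delicate, which is precisely why the paper routes through the equicontinuity theorems, whose triangular-array formulation absorbs these issues. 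Your side remark that $\bar\pi$ must effectively equal $\pi_0$ for the conclusion as stated is a fair observation (the paper's Lemma 4 yields negligibility of $V_n(\cdot,\pi_n)-V_n(\cdot,\bar\pi)$, so the same identification is implicit there). For (B5) your argument and the paper's coincide: an unrefined maximal inequality (Theorems 2.14.1/2.14.2 of \cite{van1996weak}) over the full class gives $E_0[K_n^{(1)}(\delta)]=\boundeddet(\delta^{1/2})$, which supplies the required monotone envelope for any $\alpha\in(1,2)$; your additional $n^{-1/3}$ term is harmless.
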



We note that conditions (C1) and (C2) together imply conditions (B1) and (B2). As such, if conditions (C1) and (C2) have been verified, there is no need to also verify conditions (B1) and (B2).


\section{Applications of the general theory}\label{sec:examples}

In this section, we demonstrate the use of our general results for the three examples introduced in Section~\ref{sec:grenander}: estimation of monotone density, hazard and regression functions. For each of these functions, we consider various levels of complexity of the relationship between the ideal and observed data units. This allows us to illustrate that our general results (i) coincide with classical results in the simpler cases that have already been studied, and (ii) suggest novel estimation procedures with well-understood inferential properties, even in the context of complex problems that do not appear to have been previously studied. Below, we focus on distributional results for the various estimators considered. In each case, we state the main results in the text, and present additional technical details in Supplementary Material.

\subsection{Example 1: monotone density function}\label{density}

Let $\theta_0:=f_0$ be the density function of an event time $T$  with support $I:=[0,u_0]$, and suppose that $f_0$ is known to be non-decreasing on $I$. We will not use any transformation in this example, so we take $\Phi_0$ and $\Phi_n$ to be the identity map. Thus, $\psi_0 = \theta_0$ also corresponds to the density function of $T$, and $ \Psi_0 = \Theta_0 =  \Gamma_0$ to its distribution function. Below, we consider various data settings that increase in complexity. In the first setting, available observations are subject to independent right-censoring. In the second, the right-censoring mechanism is allowed to be informative -- only conditional independence of the event and censoring times given a vector of observed covariates is assumed. The first case has been studied in the literature -- for this, we wish to verify that our general results coincide with results already established. The second case is more difficult and does not seem to have been studied before. Our work in this setting not only highlights the generality of the theory in Sections \ref{sec:general} and \ref{sec:improved}, but also yields novel practical methodology.


\subsubsection{Independent censoring}

Suppose that $C$ is a positive random variable independent of $T$, and that the observed data unit is $O=(Y,\Delta)$, where $Y = \min(T, C)$ and $\Delta = I(T \leq C)$. The NPMLE of a monotone density function based on independently right-censored data was obtained in \cite{laslett1982density} and \cite{mcnichols1982density}, and distributional results were derived in \cite{huang1994density}. \cite{huang1995right} considered an estimator $\theta_n$ obtained by differentiating the GCM of the Kaplan-Meier estimator of the distribution function. While this is not the NPMLE, \cite{huang1995right} showed that it is asymptotically equivalent to the NPMLE, and it is an attractive estimator because it is simple to construct and reduces to the Grenander estimator if $T$ is fully observed, that is, if $C\geq T$ almost surely.

Since $\Psi_0$ is the distribution function $F_0=1-S_0$ with $S_0$ denoting the survival function of $T$, it is natural to consider $\Psi_n:=1-S_n$, where $S_n$ is the Kaplan-Meier estimator of $S_0$. It is well known that $n^{1/2}(S_n-S_0)$ converges weakly in $\ell^{\infty}([0,\tau])$ to a tight zero-mean Gaussian process as long as $G_0(\tau) > 0$ and $S_0(\tau)  < 1$, where $G_0$ denotes the survival function of $C$. Denoting by $\Lambda_0$ the cumulative hazard function corresponding to $S_0$, the influence function of the Kaplan-Meier estimator $S_n(x)$ is known to be the nonparametric efficient influence function
\[ D_{0,x}^*:(y, \delta) \mapsto S_0(x) \left[ -\frac{\delta I_{[0,x]}(y)}{S_0(y)G_0(y)} + \int_0^{y \wedge x} \frac{\Lambda_0(du)}{G_0(u)S_0(u)} \right]\] and so, the local difference $g_{x,u}$ can be written as 
\begin{align*}(y,\delta)\mapsto 
 \frac{-[S_0(x + u) - S_0(x)] \delta I_{[0,x+u]}(y)}{S_0(y) G_0(y)}-\frac{S_0(x)\delta I_{(x,x+u]}(y)}{S_0(y) G_0(y)}+ \int_{v<y} \frac{I_{(x,x+u]}(v)}{S_0(v)G_0(v)} \Lambda_0(dv)\ .
 \end{align*}
In Supplementary Material, we verify that condition (B2) is satisfied if $S_0$ and $G_0$ are positive in a neighborhood of $x$, and  that condition (B3) is  satisfied if $\theta_0$ is positive and continuous in a neighborhood of $x$. The covariance function is given by $\Sigma_0:(s,t)\mapsto\int_0^{s\wedge t} \frac{S_0(s)S_0(t)}{S_0(u)G_0(u) }\Lambda_0(du)$. We then get $\kappa_0(x) = [S_0(x) / G_0(x)] \lambda_0(x) = f_0(x) / G_0(x)$, so that the scale parameter is $\tau_0(x) = [4 f_0'(x) f_0(x) / G_0(x)]^{1/3}$. This agrees with the results of \cite{huang1995right}. In Supplementary Material, we  demonstrate that conditions (B4) and (B5) are also satisfied. In the case of no censoring, $\Sigma_0(s, t)$ simplifies to $\Gamma_0(s \wedge t)-\Gamma_0(s)\Gamma_0(t)$, so that $\Sigma^*_0(s,t)=\Theta_0(s)\Theta_0(t)$, $A_0(s,t,y,w)=\theta_0(y)$, $H_0(y,w)=y$ and $\kappa_0(x) = \theta_0(x)$. This agrees with the classical result of \cite{rao1969density} concerning pointwise convergence in distribution of the Grenander estimator.

\subsubsection{Conditionally independent censoring}


In many cases, the censoring mechanism may be informative but still independent of the event time process conditionally on a vector of recorded covariates. For simplicity, we only consider the case in which these covariates are defined at baseline, though the case of time-varying covariates can be tackled similarly. The observed data unit is now $O=(Y,\Delta,W)$, and we assume that $T$ and $C$ are independent given $W$. As long as $P_0(\Delta = 1 \mid W)$ is bounded away from zero almost surely, the survival function $S_0$ of $T$ can be identified pointwise in terms of the distribution $P_0$ of $O$ via the product-limit transform
\[S_0(x) = \int\Prodi_{t\leq x}\left[1-\frac{F_{1,0}(dt, w)}{S_{Y,0}(t\mid w)}\right]Q_0(dw)\ ,\] 
where $F_{1,0}(t\mid w):=P_0(Y\leq t,\Delta=1\mid W=w)$ is the conditional subdistribution function of $Y$ given $W=w$ corresponding to $\Delta=1$, $S_{Y,0}(t\mid w):=P_0(Y\geq t\mid W=w)$ is the conditional proportion-at-risk at time $t$ given $W=w$, and $Q_0$ is the marginal distribution of $W$ under $P_0$. This constitutes an example of coarsening at random, as described in \cite{heitjan1991car} and \cite{gill1997coarsening}. Estimation of the marginal survival function $S_0$ in the context of conditionally independent censoring has been studied before by \citet{hubbard2000survival}, \cite{scharfstein2002informative} and \cite{zeng2004dependent}, among others.

In this context, the nonparametric efficient influence function $D^*_{0,x}$ of $S_0(x)$ has the form $D_{0,x}-S_0(x)$, where $D_{0,x}$ is given by  
\[(y,\delta,w)\mapsto -S_0(x \mid w)\left[\frac{\delta I_{(-\infty, x]}(y)}{S_0(y \mid   w)G_0(y\mid w)}-\int_0^{y \wedge x} \frac{\Lambda( du\mid w)}{S_0(u \mid  w) G_0(u \mid w)}  \right] + S_0(x \mid w)\]
with $S_0(x\mid w)$ and $G_0(x \mid w)$ the conditional survival functions of $T$ and $C$, respectively, at $x$ given $W=w$, and $\Lambda_0(x\mid w)$ is the conditional cumulative hazard function of $T$ at $x$ given $W=w$. A simple one-step estimator of $\Gamma_0(x)$ is given by $\Gamma_n(x):= 1-\d{P}_n D_{n,x}$, where $D_{n,x}$ is obtained by substituting $S_n$ and $G_n$ for $S_0$ and $G_0$, respectively, in $D_{0,x}$. Conditions (B1) and (B2) are satisfied under uniform Lipschitz conditions on $S_0$ and $G_0$. As we show in Supplementary Material, condition (B3) holds, and we get $\kappa_0(x) = \int [f_0(x \mid w)/G_0(x \mid w)]Q_0(dw)$, where $f_0(x \mid w)$ is the conditional density of $T$ at $x$ given $W = w$. It follows directly then that the Chernoff scale factor is \[\tau_0(x)=\left[4f_0'(x)\int \frac{f_0(x \mid w)}{G_0(x \mid w)}Q_0(dw)\right]^{1/3},\] which reduces to the scale factor of \cite{huang1995right}  when $T$ and $C$ are independent. In Supplementary Material, we demonstrate that satisfaction of condition (B4) is highly dependent on the behavior of  $S_n$ and $G_n$. For instance, if $S_n - S_0$ and $G_n - G_0$ uniformly tend to zero in probability at rates faster than $n^{-1/3}$, then conditions (B4) and (B5) are satisfied. This is not a restrictive requirement if $W$ only has few components -- in such cases, many nonparametric smoothing-based estimators satisfy such rates. Otherwise, semiparametric estimators building upon additional structure (e.g., additivity on an appropriate scale) could be used. Alternatively, for higher-dimensional $W$, estimators of the form $S_n(x \mid w) =  \exp\left[-\int_0^x  \lambda_n(v \mid w) dv\right]$ with $\lambda_n$ an estimator of the conditional hazard $\lambda_0$ may be worth considering. For such $S_n$, we require the product of the convergence rates of $\lambda_n-\lambda_0$ and $G_n-G_0$ to be faster than $n^{-1/3}$. In practice, with a moderate or high-dimensional covariate vector $W$, it seems desirable to leverage multiple candidate estimators using ensemble learning (e.g., \citealp{vanderlaan2007super, vanderlaan2011tmle}).
 

\subsection{Example 2: monotone hazard function}

We now consider estimation of $\theta_0:=\lambda_0$, the hazard function of $T$. The most obvious approach to tackle this problem would be to consider an identity transformation as in the previous example. The primitive function of interest is then the cumulative hazard function $\Lambda_0$, which can be expressed as the negative logarithm of the survival function $S_0$ and estimated naturally using any asymptotically linear estimator of $S_0$, for example. The conditions of Theorem \ref{thm:conv_in_dist} and \ref{dist_asy_lin} can then be directly verified. An alternative, more expeditious approach consists of taking the domain transform $\Phi_0$ to be the restricted mean mapping $u\mapsto \int_0^u S_0(v)dv$. In such cases, $\Gamma_0$ is simply the cumulative distribution function $F_0$, and $u_0 = \int_0^\infty S_0(v)dv$ the mean of $T$. This particular choice of domain transformation for estimating a monotone hazard function therefore yields the same parameter $\Gamma_0$ as for estimating a monotone density with the identity transform. Denoting by $S_n$ the estimator of the survival function $S_0$ based on the available data, the resulting generalized Grenander-type estimator $\theta_n$ is defined by taking $\Gamma_n:=1-S_n$ and setting $\Phi_n$ to be $u\mapsto \int_0^u S_n(v)dv$ over $J_n = [0,u_n]$, where $u_n = \int_0^\infty S_n(v)dv$. As the result below suggests, when this special domain transform is used, we can leverage some of the work performed above in analyzing the Grenander-type estimator of a monotone density function under the various right-censoring schemes considered. We recall that $\n{Id}$ denotes the identity function.

\begin{thm}\label{thm:hazard}
Suppose that $E_0\left[\sup_{u \in I_n}|S_n(u) - S_0(u)|\right]=o(r_n^{-1})$ and set $\Gamma_n:=1-S_n$. If the pair $(\Gamma_n,\n{Id})$ satisfies conditions (A1)--(A3), then the pair $(\Gamma_n, \Phi_n)$ with $\Phi_n:u\mapsto \int_{0}^{u}S_n(v)dv$ necessarily satisfies conditions (A1)--(A5). In particular, for $\theta_n := \n{Iso}_{J_n}\left(\Gamma_n \circ \Phi_n^-,\Phi_n\right)$, this implies that
\[ r_n\left[\theta_n(x) - \theta_0(x)\right] \indist -\theta_0'(x) \argmin_{u \in \d{R}} \left\{ W_x(u) + \tfrac{1}{2}\theta_0'(x) S_0(x)u^2 \right\}\ .\]
If $W_x = [\kappa_0(x)]^{1/2} W_0$ for $W_0$ a two-sided Brownian motion, then $r_n \left[\theta_n(x) - \theta_0(x)\right] \indist \tau_0(x) Z$, where $Z$ is the standard Chernoff distribution and $\tau_0(x):=\left[4 \theta_0'(x)\kappa_0(x)/S_0(x)^{2}\right]^{1/3}$.
\end{thm}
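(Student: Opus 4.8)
The plan is to reduce Theorem~\ref{thm:hazard} to an application of Theorem~\ref{thm:conv_in_dist} by verifying that, when the domain transform is chosen to be the restricted-mean mapping $\Phi_n : u\mapsto\int_0^u S_n(v)\,dv$ and $\Gamma_n := 1 - S_n$, conditions (A1)--(A5) are all inherited from the corresponding conditions for the pair $(\Gamma_n, \n{Id})$. The key algebraic observation driving everything is that both $\Gamma_n$ and $\Phi_n$ are built from the single object $S_n$: concretely, $\Phi_{n,0}(u) = \Phi_n(u) - \Phi_0(u) = \int_0^u [S_n(v) - S_0(v)]\,dv = -\int_0^u \Gamma_{n,0}(v)\,dv$. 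This means $\Phi_{n,0}$ is a ``smoothed'' (integrated) version of $\Gamma_{n,0}$, and the key quantity appearing in the localized process $W_{n,x}$, namely $\Gamma_{n,0}(x+u c_n^{-1}) - \Gamma_{n,0}(x) - \theta_0(x)[\Phi_{n,0}(x+uc_n^{-1}) - \Phi_{n,0}(x)]$, differs from $\Gamma_{n,0}(x+uc_n^{-1}) - \Gamma_{n,0}(x)$ only by the term $\theta_0(x)\int_x^{x+uc_n^{-1}} \Gamma_{n,0}(v)\,dv$, which after multiplication by $c_n^{\alpha+1} = c_n^2$ is of order $c_n^2 \cdot c_n^{-1} \cdot \sup|\Gamma_{n,0}| = c_n \sup_{I_n}|\Gamma_{n,0}|$ on the relevant neighborhood.

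\textbf{Order of steps.} First I would record the identity $\Phi_{n,0} = -\int_0^\cdot \Gamma_{n,0}$ and use it, together with the hypothesis $E_0[\sup_{u\in I_n}|S_n(u) - S_0(u)|] = o(r_n^{-1}) = o(c_n^{-1})$ (recall $\alpha=1$, $r_n = c_n$ here), to verify (A4): indeed $c_n E_0[\sup_{|v|<\delta}|\Phi_n(x+v) - \Phi_0(x+v)|] \le c_n \cdot 2\delta \cdot E_0[\sup_{I_n}|S_n - S_0|] \to 0$. Second, (A5) follows from the same bound since $\|\Gamma_{n,0} - \theta_0(x)\Phi_{n,0}\|_{\infty,I_n} \le (1 + \theta_0(x)\,|I_n|)\,\sup_{I_n}|\Gamma_{n,0}| \inprob 0$, using that $\sup_{I_n}|\Gamma_{n,0}| = \sup_{I_n}|S_n - S_0|$ tends to zero in $L^1$ hence in probability. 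Third, and most substantively, I would show that the localized process $W_{n,x}$ for $(\Gamma_n,\Phi_n)$ has the same weak limit as the localized process $W_{n,x}^0$ for $(\Gamma_n,\n{Id})$: writing out the difference $W_{n,x}(u) - W_{n,x}^0(u) = -c_n^2\,\theta_0(x)\int_x^{x+uc_n^{-1}} \Gamma_{n,0}(v)\,dv$ (up to the value at $x$, which cancels in the centered increments) and bounding its supremum over $|u|\le M$ by $M\,\theta_0(x)\,c_n\,\sup_{I_n}|\Gamma_{n,0}| = \boundeddet_{\mathrm{P}}(c_n \cdot o(c_n^{-1})) = \fasterthan(1)$; this gives (A1) for $(\Gamma_n,\Phi_n)$ with limit process $W_x$ equal to that for $(\Gamma_n,\n{Id})$, and simultaneously shows (A3) transfers because the expected-supremum bound $f_n$ is perturbed only by a term of the same negligible order (so the monotonicity/scaling structure in (A3) is preserved after possibly enlarging the constant in $f_n(1) = O(1)$). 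Fourth, (A2) is a condition on the limit process $W_x$ together with $\Phi_0$; since $W_x$ is unchanged and $\Phi_0'(x) = S_0(x) > 0$ exists and is continuous (as $S_0$ is continuous wherever $f_0$ is, which is implied by the density setup), (A2) holds whenever it held for the identity transform, noting the extra factor $\Phi_0'(x)$ is just a positive rescaling of the parabola and linear drift. Finally, I would invoke Theorem~\ref{thm:conv_in_dist}: with $\Phi_0'(x) = S_0(x)$ and $\alpha = 1$, the first conclusion gives $r_n[\theta_n(x)-\theta_0(x)] \indist S_0(x)^{-1}\partial_-\n{GCM}_{\d R}\{v\mapsto W_x(v) + \tfrac12\theta_0'(x)S_0(x)\,v^2\}(0)$, which by the switch from GCM-slope to argmin stated in the theorem equals $-\theta_0'(x)\argmin_u\{W_x(u) + \tfrac12\theta_0'(x)S_0(x)u^2\}$; and the Brownian-motion specialization then yields $\tau_0(x) Z$ with $\tau_0(x) = [4\theta_0'(x)\kappa_0(x)/S_0(x)^2]^{1/3}$ by the self-similarity scaling of Brownian motion exactly as in the last display of Theorem~\ref{thm:conv_in_dist}, since $\Phi_0'(x) = S_0(x)$.

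\textbf{Main obstacle.} The routine parts are (A4) and (A5), which are essentially immediate from the $o(r_n^{-1})$ hypothesis, and the final bookkeeping invoking Theorem~\ref{thm:conv_in_dist}. The delicate step is the third one: showing the localized processes for $(\Gamma_n,\Phi_n)$ and $(\Gamma_n,\n{Id})$ have the same weak limit and that (A3) is preserved. The subtlety is that (A3) is a statement about $E_0[\sup_{|u|\le c_n\delta}|W_{n,x}(u)|]$ being dominated by a function $f_n$ with a specific scaling/monotonicity property, and one must check that adding the perturbation term $c_n^2\theta_0(x)\int_x^{x+uc_n^{-1}}\Gamma_{n,0}$ — whose supremum over $|u|\le c_n\delta$ is bounded by $\theta_0(x)\,c_n^2\,\delta\,c_n^{-1}\cdot\sup_{I_n}|\Gamma_{n,0}|\le \theta_0(x)\,\delta\,E_0[\sup_{I_n}|S_n-S_0|]\cdot c_n$... — actually this is $\fasterthandet(1)$ in $n$ uniformly in $\delta\le\delta^*$, hence can be absorbed into $f_n$ while keeping $f_n(1) = O(1)$ and the map $u\mapsto u^{-\beta}f_n(c_n u)$ decreasing for the same $\beta\in(1,2)$. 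One needs the perturbation bound to be of the form (constant)$\cdot(c_n\delta)/c_n \cdot E_0[\sup|S_n-S_0|] \le$ (constant)$\cdot\delta\cdot o(c_n^{-1})\cdot c_n$ — linear in $\delta$, hence compatible with any $\beta>1$ — so the monotonicity hypothesis on $f_n$ survives; verifying this cleanly, and handling the measurability of the suprema (so that $E_0$ is well-defined, appealing to outer expectation if necessary as is standard in empirical-process arguments), is where the real care is required.
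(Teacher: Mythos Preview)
Your proposal is correct and follows essentially the same strategy as the paper: write the localized process for $(\Gamma_n,\Phi_n)$ as $W_{n,x}$ for $(\Gamma_n,\n{Id})$ plus the perturbation $-\theta_0(x)\,c_n^2\int_x^{x+uc_n^{-1}}[S_n-S_0]$, show this perturbation is negligible so that (A1)--(A2) transfer, absorb it into $f_n$ for (A3), and verify (A4)--(A5) directly from the $o(r_n^{-1})$ hypothesis before invoking Theorem~\ref{thm:conv_in_dist}. The one technical difference is in how you control the perturbation for (A3): you bound it crudely by $(c_n\delta)\,\theta_0(x)\cdot c_nE_0[\sup_{I_n}|S_n-S_0|]=(c_n\delta)\cdot o(1)$, which is linear in $c_n\delta$ and hence compatible with any $\beta>1$; the paper instead rewrites the perturbation as $\theta_0(x)r_n^{-1}\int_0^u W_{n,x}(v)\,dv - u\,r_n\theta_0(x)[S_n(x)-S_0(x)]$ and bounds the first piece by $\delta f_n(r_n\delta)$ using (A3) for $(\Gamma_n,\n{Id})$ directly. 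Both routes yield a valid modified $f_n$. One minor slip: in your (A4) bound the constant should be $x+\delta$, not $2\delta$, since $\Phi_{n,0}(x+v)=\int_0^{x+v}[S_n-S_0]$ is an integral over $[0,x+v]$ rather than over a window of width $2\delta$; this does not affect the conclusion.
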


Denote by $T_{(j)}$ the $j^{\textrm{th}}$ order statistic of $\{T_1,T_2,\ldots,T_n\}$ and define $T_{(0)}:= 0$. When there is no censoring, the choice $(\Gamma_n,\Phi_n)$ prescribed above indicates that $\Gamma_n$ is the empirical distribution function based on $Y_1,Y_2,\ldots,Y_n$, and $\Phi_n$ is defined pointwise as $\Phi_n(x):=\tfrac{1}{n}\sum_{i=1}^n \min(T_{(i)}, x)$, which is strictly increasing on $[0, T_{(n)}]$. Therefore,  $\theta_n(x)$ is the left derivative at $\Phi_n(x)$ of the GCM  of the graph of $ \{(\Phi_n(T_{(k)}), \Gamma_n(T_{(k)})) : k = 0,1, \dotsc, n\} = \{((\tfrac{n-k}{n})T_{(k)} + \tfrac{1}{n}\sum_{i=1}^k T_{(i)}, \tfrac{k}{n})   : k = 0,1, \dotsc, n\}$. This is the NPMLE of a non-decreasing hazard function with uncensored data -- see, for example, Chapter 2.6 of \cite{groene2014shape}.

In Supplementary Material, we verify conditions (A1)--(A3) for each of three right-censoring schemes when $\Theta_n=1-S_n$, and $\Phi_0$ and $\Phi_n$ are both equal to the identity. Thus, to use Theorem \ref{thm:hazard}, it would suffice to verify that $E_0\left[\sup_{u \in I_n}|S_n(u) - S_0(u)|\right]$ tends to zero faster than $n^{-1/3}$. This is straightforward given the weak convergence of $n^{1/2}\left(S_n-S_0\right)$. Thus, the above theorem provides distributional results for monotone hazard function estimators in each right-censoring scheme considered, as summarized below: \begin{enumerate}[(i)]
\item when there is no censoring, we find $\tau_0(x)=[4\lambda_0'(x)\lambda_0(x)/S_0(x)]^{1/3}$, which agrees with \cite{rao1970hazard};
\item when there is independent right-censoring, we find that $\tau_0(x)=\{4\lambda_0'(x)\lambda_0(x)/[G_0(x)S_0(x)]\}^{1/3}$, which agrees with \cite{huang1995right};
\item when there is conditionally independent right-censoring, an important setting that does not seem to have been previously studied in the literature, we find that \[\tau_0(x)=\left[\frac{4\lambda'_0(x)}{S_0(x)^2}\int \frac{f_0(x\mid w)}{G_0(x\mid w)}Q_0(dw)\right]^{1/3} = \left\{\frac{4\lambda'_0(x) \lambda_0(x)}{G_0(x)S_0(x)} \left[\frac{G_0(x)}{f_0(x)}\int \frac{f_0(x\mid w)}{G_0(x\mid w)}Q_0(dw)\right]\right\}^{1/3}.\]
\end{enumerate} 
If either $T$ or $C$ are independent of $W$, the unadjusted Kaplan-Meier estimator is consistent for the true marginal survival function of $T$, and so, unadjusted estimators of the density and hazard functions are consistent. In these cases, we may then ask how the asymptotic distributions of the adjusted and unadjusted estimators compare. Since all limit distributions are of the scaled Chernoff type, it suffices to compare the scale factors arising from the different estimators. The second expression in (iii) is helpful to assess the impact of unnecessary covariate adjustment. If $C$ and $W$ are independent, then $G_0(x\mid w)=G_0(x)$ for each $w$, and so, the scale factors in (ii) and (iii) are identical. If $T$ and $W$ are dependent, so that $f_0(x \mid w) = f_0(x)$ for each $w$, but $C$ and $W$ are not, then the scale factor in (iii) is generally larger than the scale factor in (ii). In summary, when using an adjusted rather than unadjusted estimator of the hazard function, there may only be a penalty in asymptotic efficiency when adjusting for covariates that $C$ depends on but $T$ does not. The relative loss of efficiency is given by $\left\{\int [G_0(x)/G_0(x \mid w)]Q_0(dw)\right\}^{1/3}$.

\subsection{Example 3: monotone regression function}

We finally consider estimation of a monotone regression function. We first focus on the simple case in which the association between the outcome and exposure of interest is not confounded. In such cases, the parameter of interest is the conditional mean of the outcome given exposure level, and the standard least-squares isotonic regression estimators can be used. We show that our general theory covers this classical case. We then consider the case in which the relationship between outcome and exposure is confounded but the confounders of this relationship have been recorded. In this more challenging case, we consider the marginalization (or standardization) of the conditional mean outcome given exposure level and confounders over the marginal confounder distribution. We study this problem using results from Section \ref{sec:improved}, which allow us to provide theory for a novel estimator proposed for this important case.

\subsubsection{No confounding}

In the standard least-squares isotonic regression problem, we observe independent replicates of $O:=(A, Y)$, where $Y \in \d{R}$ is an outcome and $A \in \d{R}$ is the exposure of interest. We are interested in the conditional mean function $\theta_0:=\mu_0$, where $\mu_0(x):=E_0\left(Y \mid A = x\right)$ is the mean outcome at exposure level $x$. The primitive function of $\theta_0$ can be written as $\Theta_0(t)=E_0\left[ YI_{(-\infty,t]}(A)/ f_0(A)\right]$ for each $t$, where $f_0$ is the marginal density of $A$. The corresponding primitive parameter at $x$ is pathwise differentiable with nonparametric efficient influence function $(a,y)\mapsto yI_{(-\infty,x]}(a)/ f_0(a) - \Theta_0(x)$. An obvious approach to estimation of $\theta_0$ consists of constructing an asymptotically linear estimator of $\Theta_0$ -- this involves nonparametric estimation of the nuisance density $f_0$ -- and differentiating the GCM of the resulting curve -- this involves selecting the interval over which the GCM is calculated.

By using a domain transformation, it is possible to avoid both the need for nonparametric density estimation and the choice of isotonization interval. Let $\Phi_0$ be the marginal distribution function of $A$. With this transformation, we note that $\Psi_0(t)=E_0\left[YI_{(-\infty,t]}(\Phi_0(A))\right]$ and $\Gamma_0(t)= E_0\left[ Y I_{(-\infty,t]}(A)\right]$ for each $t$. This suggests taking $\Phi_n$ to be the empirical distribution function based on $A_1,A_2,\ldots,A_n$ and  $\Gamma_n(x):=\frac{1}{n} \sum_{i=1}^n Y_i I_{(-\infty,x]}(A_i)$. The resulting estimator $\theta_n(x)$ is precisely the well-known least-squares isotonic regression estimator of $\theta_0(x)$. Since $\Phi_n$ is a step function with jumps at the observed values of $A$, $\theta_n(x)$ is equal to the left-hand slope of the GCM at $\Phi_n(x)$ of the so-called \emph{cusum diagram} $\{(\Phi_n(A_{k}), \Gamma_n(A_{k})) : k=0,1,\ldots,n\} =  \{ ( \tfrac{k}{n}, \tfrac{S_k}{n}) : k = 0,1, \dotsc, n\}$, where we let $A_0 = -\infty$, $S_0 = 0$ and $S_k = \sum_{i=1}^k Y_i$ for $k \geq 1$.



Because both $\Gamma_n$ and $\Phi_n$ are linear estimators, these estimators do not generate second-order remainder terms to analyze. The influence functions of $\Gamma_n$ and $\Phi_n$ are, respectively, $D^*_{0,x}: (a,y)\mapsto y I_{(-\infty,x]}(a)-\Gamma_0(x)$ and $L^*_{0,x}: (a,y)\mapsto I_{(-\infty,x]}(a)-\Phi_0(x)$. In Supplementary Material, we demonstrate that if in a neighborhood of $x$, the conditional variance function, defined pointwise as $\sigma_0^2(t) := \n{Var}_0(Y \mid A = t)$, is bounded and continuous, and $\Phi_0$ possesses a positive, continuous density, then Theorem~\ref{dist_asy_lin} holds with 
\[ \tau_0(x)=\left[\frac{4 \mu_0'(x) \sigma_0^2(x)}{f_0(x)} \right]^{1/3},\]
 coinciding with the classical results of \cite{brunk1970regression}.
 
 \subsubsection{Confounding by recorded covariates}
 
We now consider a scenario in which the relationship between outcome $Y$ and exposure $A$ is confounded by a vector $W$ of recorded covariates. The observed data unit is thus $O:=(W,A,Y)$. A more relevant estimand in this scenario might be the marginalized regression function $\theta_0:=\nu_0$ with $\nu_0(x)$ defined as $E_{0}\left[ E_0\left( Y \mid A = x, W\right)\right]$. We note that $\nu_0(x)$ can be interpreted as a causal dose-response curve if (i) $W$ includes all confounders of the relationship between $A$ and $Y$, and (ii) the probability of observing an individual subject to exposure level $x$ is positive in $P_0$-almost every stratum defined by  $W$. In many scientific settings, it may be known that the causal dose-response curve is monotone in exposure level.
 
We again consider transformation by the marginal distribution function of $A$. In other words, we set $\Phi_0(x):=P_0(A\leq x)$ and take $\Phi_n(x):=\frac{1}{n}\sum_{i=1}^{n}I_{(-\infty,x]}(A_i)$ for each $x$. We then have that 
\[\Gamma_0(x) = E_{0}\left[ \frac{YI_{(-\infty,x]}(A)}{g_0(A,W)}\right] = \iint  I_{(-\infty,x]}(a)\mu_0(a, w) \Phi_0(da) Q_0(dw)\ ,\]
 where $g_0$ is the density ratio $(a, w)\mapsto f_0(a \mid w) / f_0(a)$, with $f_0(a \mid w)$ denoting the conditional density function of $A$ at $a$ given $W=w$ and $f_0(a)$ the marginal density function of $A$ at $a$ as before, and $\mu_0$ is the regression function $(a, w) \mapsto E_0 (Y \mid A = a, W =w)$. While in this case the domain transform does not eliminate the need to estimate nuisance functions, it nevertheless results in a procedure for which there is no need to choose the interval over which the GCM is calculated. 

Setting $\eta_0(x, w) := \int I_{(-\infty,x]}(a)\mu_0(a, w) \Phi_0(da)$ for each $x$ and $w$, the nonparametric efficient influence function of $\Gamma_0(x)$ is
\[ (w,a,y)\mapsto I_{(-\infty,x]}(a)\left[ \frac{y - \mu_0(a, w)}{g_0(a, w)}+\theta_0(a)\right] + \eta_0(x, w) - 2\Gamma_0(x)\ . \] Suppose that $\mu_n$ and $g_n$ denote estimators of $\mu_0$ and $g_0$, respectively. If the empirical distributions $\Phi_n$ and $Q_n$ based on $A_1,A_2,\ldots,A_n$ and $W_1,W_2,\ldots,W_n$, respectively, are used as estimators of $\Phi_0$ and $Q_0$, it is not difficult to show that  
\[ \Gamma_n(x) := \frac{1}{n}\sum_{i=1}^{n} I_{(-\infty,x]}(A_i)\left[\frac{Y_i-\mu_n(A_i,W_i)}{g_n(A_i,W_i)}+\frac{1}{n}\sum_{j=1}^{n}\mu_n(A_i,W_j)\right] \] is a one-step estimator of $\Gamma_0(x)$, and that it is asymptotically efficient under regularity conditions on the nuisance estimators $\mu_n$ and $g_n$.

Conditions (B1)--(B5) can be verified with routine but tedious work. Here, we focus on condition (B3), which allows us to obtain the scale parameter of the limit distribution, and on condition (B4), which requires that the nuisance estimators converge sufficiently fast. We find  that condition (B4) is satisfied if, for some $\epsilon > 0$,
\[ \sup_{|x - u| \leq \epsilon}E_{0}\left[ \mu_n(u, W) - \mu_0(u, W)\right]^2  \sup_{|x - u| \leq \epsilon} E_{0}\left[ \frac{g_0(u, W)}{g_n(u,W)} - 1\right]^2 = \fasterthan\left(n^{-1/3}\right),\]
and additional empirical process conditions hold. Turning to condition (B3), under certain smoothness conditions, we have that $\kappa_0(x) = f_0(x)^2\int \left[ \sigma_0^2(x,w)/f_0(x \mid w)\right]Q_0(dw)$, where $\sigma_0^2:(a,w)\mapsto \n{Var}_0(Y \mid A = a, W =w)$ denotes the conditional variance function of $Y$ given $A$ and $W$. We then find that the scale parameter of the limit Chernoff distribution is 
\[ \tau_0(x) = \left\{ 4 \nu_0'(x)\int\left[ \frac{\sigma_0^2(x, W)}{f_0(x \mid W)}\right]Q_0(dw) \right\}^{1/3}.\]

The marginalized and marginal regression functions exactly coincide -- that is, $\nu_0=\mu_0$ -- if, for example, (i) $Y$ and $W$ are conditionally independent given $A$, or (ii) $A$ and $W$ are independent. It is natural then to ask how the limit distribution of estimators of these two parameters compare under scenarios (i) and (ii), when the parameters in fact agree with each other.  In scenario (i), the scale parameter obtained based on the estimator accounting for potential confounding reduces to \[\tau_{0,red}(x)\ =\ \left\{ 4 \mu_0'(x)\sigma_0^2(x)\int\frac{Q_0(dw)}{f_0(x \mid w)} \right\}^{1/3}\ \geq\ \left\{ \frac{4 \mu_0'(x)\sigma_0^2(x)}{\int f_0(x \mid w)Q_0(dw)} \right\}^{1/3}\ =\ \left\{\frac{4\mu_0'(x)\sigma^2_0(x)}{f_0(x)}\right\}^{1/3} \] by Jensen's inequality. Thus, if $Y$ and $W$ are conditionally independent given $A$, in which case there is no need to adjust for potential confounders, the marginal isotonic regression estimator has a more concentrated limit distribution than the marginalized isotonic regression estimator. In scenario (ii), the scale parameter of the estimator accounting for potential confounding reduces to \[\tau_{0,red}(x)\ =\ \left\{ \frac{4 \mu_0'(x)}{f_0(x)}\int \sigma_0^2(x, W)Q_0(dw) \right\}^{1/3}\ \leq\ \left\{ \frac{4 \mu_0'(x)\sigma_0^2(x)}{f_0(x)} \right\}^{1/3}\] given that $\int \sigma^2_0(x,w)Q_0(dw)\leq \sigma^2_0(x)$ by the law of total variance. Thus, if $A$ and $W$ are independent, the marginal isotonic regression estimator has a less concentrated limit distribution than the marginalized isotonic regression estimator. In both scenarios (i) and (ii), the difference in concentration between the limit distributions of the two estimators varies with the amount of dependence between $A$ and $W$. We note that these observations  are analogous to those obtained in linear regression.


\section{Simulation study}\label{sec:sims}

In this section, we report results from a small simulation study conducted to illustrate the large-sample results derived in Sections \ref{sec:general} and \ref{sec:improved}. Here, we consider Examples 1 and 2 from Section \ref{sec:examples}, namely estimation of a monotone density and hazard functions. Since the purpose of studying the cases without censoring or with independent censoring was to verify our general results in previously studied settings, our simulation is focused on the novel and more difficult scenario in which censoring is only conditionally independent. Through our simulation study, we wish to assess how well the finite-sample distribution of $n^{1/3}\left[\theta_n(x) - \theta_0(x)\right]$ approximates the limit distributions derived in the previous section.

Conditionally on a single covariate $W$ distributed uniformly on the interval $(-1,+1)$, we consider the event and censoring times $T$ and $C$ to be independent and to each follow a Weibull distribution. Specifically, we take the conditional distribution of $T$ given $W=w$ to be a Weibull distribution with shape parameter $4$ and scale parameter $\exp\left(\alpha_0+\alpha_1w\right)$, while we take the conditional distribution of $C$ given $W=w$  to be a Weibull distribution with shape parameter $2$ and scale parameter $\exp\left(\beta_0+\beta_1w\right)$. We perform simulations under four distinct settings: (i) both $T$ and $C$ depend on $W$; (ii) only $T$ depends on $W$; (iii) only $C$ depends on $W$; and (iv) neither $T$ nor $C$ depend on $W$. To achieve this,  in settings (i), (ii), (iii) and (iv), we set the vector $(\alpha_0,\alpha_1,\beta_0,\beta_1)$ of parameters to be $(0.25,-0.375,0.25,-0.75)$, $(0.25,-0.375,1,0)$, $(0.25,0,0.25,-0.75)$ and $(0.25,0,1,0)$, respectively. We note that $T$ and $C$ follow proportional hazards models conditionally on $W$, and that the marginal density and hazard functions of $T$ are monotone over the interval $[0,1]$.


We used the generalized Grenander-type estimators proposed in the previous section to estimate the marginal density and hazard functions of $T$  over $[0,1]$ in each of the four simulation settings. First, we employed a naive procedure based on the Kaplan-Meier estimator of $S_0$, and second, we used a one-step procedure based on estimating the underlying conditional event and censoring hazard functions using a Cox model with single covariate $W$ as main term only. We note that our goal differs from recent work on estimating a monotone baseline hazard (e.g., \citealp{lopuhaa2013breslow,lopuhaa2013shape,lopuhaa2017smooth,lopuhaa2018smooth}). Our interest is in the marginal distribution of $T$ rather than the conditional distribution of $T$ given $W = 0$. Additionally, in principle, other consistent estimators of the conditional distributions of $T$ and $C$ given $W$ could be used instead of Cox model-based estimators without changing the asymptotic results, as discussed in the previous section.


The true density and hazard functions are plotted in Figure \ref{fig:dens.haz.example} along with an overlay of ten realizations of the estimator based on the naive and one-step procedures for estimating the marginal survival function $S_0$ based on random samples of size $n=5000$. Realizations of the estimator based on the one-step procedure track the true marginal density and hazard functions of $T$ over all four simulation settings, as expected. Realizations of the estimator based on the naive procedure also track the true marginal density and hazard functions of $T$ for settings (ii) through (iv), since in each of these settings $T$ and $C$ are independent. However, in setting (i), the estimator based on the naive procedure is inconsistent. The limit of the estimators of the marginal  density and hazard functions can be derived to be the density and hazard functions, respectively, corresponding to the survival function 
\[ t \mapsto \exp\left[\int_0^t -\frac{\int f_0(v \mid w) G_0(v \mid w)Q_0(dw)}{\int S_0(v \mid w) G_0(v \mid w)Q_0(dw)} \, dv\right] .\]
These density and hazard functions are shown as black dotted lines in Figure~\ref{fig:dens.haz.example}.

In Figure \ref{fig:dens.haz.vars}, the empirical variance over 1000 simulations of $n^{1/3}\left[\theta_n(x) - \theta_0(x)\right]$ for $n=5000$ is compared to the corresponding theoretical variances based on the limit  theory we have presented in Section \ref{sec:examples}, for values of $x$ between 0 and 1 and under the four considered scenarios. The sampling variance of the estimator appears close to the theoretical large-sample variance, except for $x$ values near the upper boundary of the isotonizing interval. As expected, estimators based on the naive and one-step procedures have nearly identical sampling variances when only $T$ is dependent on $W$ (second column) and when neither $T$ nor $C$ are dependent on $W$ (fourth column), but the sampling variance of the estimator based on the naive procedure is smaller than that based on the one-step procedure when only $C$ is dependent on $W$ (third column).

The empirical sampling distribution over 1000 simulations of $n^{1/3}\left[\theta_n(0.7) - \theta_0(0.7)\right]$ for $n=5000$ is compared in Figure \ref{fig:dens.haz.dists} to the theoretical scaled Chernoff limit distributions under the four different scenarios. In all situations, the sampling distribution approximates the theoretical limit. In the left-most columns, the bias of the estimator based on the naive procedure is evident. In Figure \ref{fig:dens.haz.dists.n}, the empirical sampling distribution of the estimators in settings where both $T$ and $C$ are dependent on $W$ is plotted against the theoretical scaled Chernoff limit distribution for four different values of sample size $n$. At $n = 500$, the estimators are moderately biased downward, but as $n$ increases, this bias vanishes.

\section{Concluding remarks}\label{sec:discussion}

We have studied a broad class of estimators of monotone functions based on differentiating the greatest convex minorant of a preliminary estimator of a primitive parameter. A novel aspect of the class we have considered is its allowance for the primitive parameter to involve a possibly data-dependent transformation of the domain. The class we have defined is useful because it generalizes classical approaches for simple monotone functions, including density, hazard and regression functions, facilitates the integration of flexible, data-adaptive learning techniques, and allows valid asymptotic statistical inference. We have provided general asymptotic results for estimators in this class and have also derived refined results for  the important case wherein the primitive estimator is uniformly asymptotically linear. We have proposed novel estimators of extensions of classical monotone parameters that deal with common sampling complications, and described their large-sample properties using our general results.



Our primary goal in this paper has been to establish general theoretical results that can be applied to study many specific estimators, and as such, there are numerous potential applications of our results.  There are also a multitude of useful properties and modifications of Grenander-type estimators that have been studied in the literature and whose extension to our class would be important. For instance, kernel smoothing of a Grenander-type estimator yields a monotone estimator that possesses many of the properties of usual kernel smoothing estimators, including possibly faster convergence to a normal distribution (e.g., \citealp{mukerjee1988smooth, mammen1991smooth,groeneboom2010smooth}). The asymptotic distribution of the supremum norm error of Grenander-type estimators has also been derived (e.g., \citealp{durot2012}), and extending this result to our class would refine further our pointwise results. Asymptotic results at the boundaries of the domain and corrections for poor behavior there have been developed and would further enhance the utility of these methods (e.g., \citealp{woodroofe1993penalized, balabdaoui2011grenander, kulikov2006}).

There have also been various proposals for constructing asymptotically valid pointwise confidence intervals for Grenander-type estimators without the need to compute the complicated scale parameters appearing in their limit distribution. In regular statistical problems, the bootstrap is one of the most widely used such methods; unfortunately, the nonparametric bootstrap is known to fail for Grenander-type estimators (e.g., \citealp{kosorok2008bootstrap, sen2010bootstrap}). However, these articles have demonstrated that the $m$-out-of-$n$ bootstrap can be valid for Grenander-type estimators, and that bootstrapping smoothed versions of Grenander-type estimators can also be an effective strategy for performing inference. Asymptotically pivotal distributions based on likelihood ratios have also been used to avoid the need to estimate nuisance parameters in the limit distribution and to provide a basis for improved finite-sample inference (e.g., \citealp{banerjee2001ratio, banerjee2005semi, banerjee2005local, banerjee2007response, groeneboom2015nonparametric}). Considering these strategies in our setting would be particularly interesting.

\vspace{.1in}
\singlespacing
{\footnotesize
\section*{Acknowledgements}
The authors thank the referees and associate editor for providing constructive and insightful feedback that helped them improve this manuscript. They also thank Antoine Chambaz and Mark van der Laan for stimulating conversations that sparked their interest in this problem, Jon Wellner for sharing insight and helping them better understand the history of this problem, and Alex Luedtke and Peter Gilbert for providing feedback early on in this work. The authors  also gratefully acknowledge the support of NIAID grant 5UM1AI058635 (TW, MC) and the Career Development Fund of the Department of Biostatistics at the University of Washington (MC).\bibliographystyle{apa}
\bibliography{monotone_theory}

\begin{thebibliography}{}

\bibitem[\protect\astroncite{Anevski and
  H{\"o}ssjer}{2006}]{anevski2006general}
Anevski, D. and H{\"o}ssjer, O. (2006).
\newblock A general asymptotic scheme for inference under order restrictions.
\newblock {\em Ann. Statist.}, 34(4):1874--1930.

\bibitem[\protect\astroncite{Anevski and Soulier}{2011}]{anevski2011}
Anevski, D. and Soulier, P. (2011).
\newblock Monotone spectral density estimation.
\newblock {\em Ann. Statist.}, 39(1):418--438.

\bibitem[\protect\astroncite{Bagchi et~al.}{2016}]{bagchi2017}
Bagchi, P., Banerjee, M., and Stoev, S.~A. (2016).
\newblock Inference for monotone functions under short- and long-range
  dependence: Confidence intervals and new universal limits.
\newblock {\em Journal of the American Statistical Association},
  111(516):1634--1647.

\bibitem[\protect\astroncite{Balabdaoui et~al.}{2011}]{balabdaoui2011grenander}
Balabdaoui, F., Jankowski, H., Pavlides, M., Seregin, A., and Wellner, J.
  (2011).
\newblock On the {G}renander estimator at zero.
\newblock {\em Statistica Sinica}, 21(2):873.

\bibitem[\protect\astroncite{Banerjee}{2005a}]{banerjee2005semi}
Banerjee, M. (2005a).
\newblock Likelihood ratio tests under local alternatives in regular
  semiparametric models.
\newblock {\em Statistica Sinica}, 15(3):635--644.

\bibitem[\protect\astroncite{Banerjee}{2005b}]{banerjee2005local}
Banerjee, M. (2005b).
\newblock Likelihood ratio tests under local and fixed alternatives in monotone
  function problems.
\newblock {\em Scandinavian Journal of Statistics}, 32(4):507--525.

\bibitem[\protect\astroncite{Banerjee}{2007}]{banerjee2007response}
Banerjee, M. (2007).
\newblock Likelihood based inference for monotone response models.
\newblock {\em Ann. Statist.}, 35(3):931--956.

\bibitem[\protect\astroncite{Banerjee and Wellner}{2001}]{banerjee2001ratio}
Banerjee, M. and Wellner, J.~A. (2001).
\newblock Likelihood ratio tests for monotone functions.
\newblock {\em Ann. Statist.}, 29(6):1699--1731.

\bibitem[\protect\astroncite{Beare and Fang}{2017}]{beare2017weak}
Beare, B.~K. and Fang, Z. (2017).
\newblock Weak convergence of the least concave majorant of estimators for a
  concave distribution function.
\newblock {\em Electron. J. Statist.}, 11(2):3841--3870.

\bibitem[\protect\astroncite{Brunk}{1970}]{brunk1970regression}
Brunk, H.~D. (1970).
\newblock Estimation of isotonic regression.
\newblock In {\em Nonparametric Techniques in Statistical Inference (Proc.
  Sympos., Indiana Univ., Bloomington, Ind., 1969)}, pages 177--197, London.
  Cambridge Univ. Press.

\bibitem[\protect\astroncite{Carolan and Dykstra}{2008}]{carolan2008}
Carolan, C. and Dykstra, R. (2008).
\newblock Asymptotic behavior of the grenander estimator at density flat
  regions.
\newblock {\em Canadian Journal of Statistics}, 27(3):557--566.

\bibitem[\protect\astroncite{Dedecker et~al.}{2011}]{dedecker2011}
Dedecker, J., Merlev{\`e}de, F., and Peligrad, M. (2011).
\newblock Invariance principles for linear processes with application to
  isotonic regression.
\newblock {\em Bernoulli}, 17(1):88--113.

\bibitem[\protect\astroncite{Durot}{2007}]{durot2007}
Durot, C. (2007).
\newblock On the {$\mathbb{L}_{p}$} -error of monotonicity constrained
  estimators.
\newblock {\em Ann. Statist.}, 35(3):1080--1104.

\bibitem[\protect\astroncite{Durot et~al.}{2013}]{durot2013testing}
Durot, C., Groeneboom, P., and Lopuha\"{a}, H.~P. (2013).
\newblock Testing equality of functions under monotonicity constraints.
\newblock {\em Journal of Nonparametric Statistics}, 25(4):939--970.

\bibitem[\protect\astroncite{Durot et~al.}{2012}]{durot2012}
Durot, C., Kulikov, V.~N., and Lopuha{\"a}, H.~P. (2012).
\newblock The limit distribution of the {$L_{\infty}$}-error of grenander-type
  estimators.
\newblock {\em Ann. Statist.}, 40(3):1578--1608.

\bibitem[\protect\astroncite{Durot and Lopuha\"{a}}{2014}]{durot2014kiefer}
Durot, C. and Lopuha\"{a}, H.~P. (2014).
\newblock A kiefer-wolfowitz type of result in a general setting, with an
  application to smooth monotone estimation.
\newblock {\em Electron. J. Statist.}, 8(2):2479--2513.

\bibitem[\protect\astroncite{Gill et~al.}{1997}]{gill1997coarsening}
Gill, R.~D., Van Der~Laan, M.~J., and Robins, J.~M. (1997).
\newblock Coarsening at random: Characterizations, conjectures,
  counter-examples.
\newblock In Lin, D., editor, {\em Proceedings of the First Seattle Symposium
  in Biostatistics}, pages 255--294. Springer, New York.

\bibitem[\protect\astroncite{Grenander}{1956}]{grenander1956theory}
Grenander, U. (1956).
\newblock On the theory of mortality measurement. {II}.
\newblock {\em Scandinavian Actuarial Journal}, 39:125--153.

\bibitem[\protect\astroncite{Groeneboom}{1985}]{groeneboom1985density}
Groeneboom, P. (1985).
\newblock Estimating a monotone density.
\newblock In {\em Proceedings of the Berkeley Conference in honor of Jerzy
  Neyman and Jack Kiefer, Vol. II}, pages 539--555, Belmont, CA. Wadsworth.

\bibitem[\protect\astroncite{Groeneboom and Jongbloed}{2014}]{groene2014shape}
Groeneboom, P. and Jongbloed, G. (2014).
\newblock {\em Nonparametric estimation under shape constraints}.
\newblock Cambridge University Press.

\bibitem[\protect\astroncite{Groeneboom and
  Jongbloed}{2015}]{groeneboom2015nonparametric}
Groeneboom, P. and Jongbloed, G. (2015).
\newblock Nonparametric confidence intervals for monotone functions.
\newblock {\em Ann. Statist.}, 43(5):2019--2054.

\bibitem[\protect\astroncite{Groeneboom et~al.}{2010}]{groeneboom2010smooth}
Groeneboom, P., Jongbloed, G., and Witte, B.~I. (2010).
\newblock Maximum smoothed likelihood estimation and smoothed maximum
  likelihood estimation in the current status model.
\newblock {\em Ann. Statist.}, 38(1):352--387.

\bibitem[\protect\astroncite{Groeneboom and Wellner}{2001}]{groeneboom2001jcgs}
Groeneboom, P. and Wellner, J.~A. (2001).
\newblock Computing chernoff's distribution.
\newblock {\em Journal of Computational and Graphical Statistics},
  10(2):388--400.

\bibitem[\protect\astroncite{Heitjan and Rubin}{1991}]{heitjan1991car}
Heitjan, D.~F. and Rubin, D.~B. (1991).
\newblock Ignorability and coarse data.
\newblock {\em Ann. Statist.}, 19(4):2244--2253.

\bibitem[\protect\astroncite{Huang and Wellner}{1995}]{huang1995right}
Huang, J. and Wellner, J.~A. (1995).
\newblock Estimation of a monotone density or monotone hazard under random
  censoring.
\newblock {\em Scandinavian Journal of Statistics}, 22(1):3--33.

\bibitem[\protect\astroncite{Huang and Zhang}{1994}]{huang1994density}
Huang, Y. and Zhang, C.-H. (1994).
\newblock Estimating a monotone density from censored observations.
\newblock {\em The Annals of Statistics}, 22(3):1256--1274.

\bibitem[\protect\astroncite{Hubbard et~al.}{2000}]{hubbard2000survival}
Hubbard, A.~E., van~der Laan, M.~J., and Robins, J.~M. (2000).
\newblock Nonparametric locally efficient estimation of the treatment specific
  survival distribution with right censored data and covariates in
  observational studies.
\newblock {\em IMA Volumes in Mathematics and Its Applications}, 116:135--178.

\bibitem[\protect\astroncite{Kim and Pollard}{1990}]{kim1990cube}
Kim, J. and Pollard, D. (1990).
\newblock Cube root asymptotics.
\newblock {\em Ann. Statist.}, 18:191--219.

\bibitem[\protect\astroncite{Kosorok}{2008}]{kosorok2008bootstrap}
Kosorok, M.~R. (2008).
\newblock Bootstrapping the grenander estimator.
\newblock In Balakrishnan, N., Peña, E.~A., and Silvapulle, M.~J., editors,
  {\em Beyond Parametrics in Interdisciplinary Research: Festschrift in Honor
  of Professor Pranab K. Sen}, volume~1 of {\em Collections}, pages 282--292.
  Institute of Mathematical Statistics.

\bibitem[\protect\astroncite{Kulikov and Lopuha\"{a}}{2006}]{kulikov2006}
Kulikov, V.~N. and Lopuha\"{a}, H.~P. (2006).
\newblock The behavior of the {NPMLE} of a decreasing density near the
  boundaries of the support.
\newblock {\em Ann. Statist.}, 34(2):742--768.

\bibitem[\protect\astroncite{Laslett}{1982}]{laslett1982density}
Laslett, G.~M. (1982).
\newblock The survival curve under monotone density constraints with
  applications to two-dimensional line segment processes.
\newblock {\em Biometrika}, 69(1):153--160.

\bibitem[\protect\astroncite{Leurgans}{1982}]{leurgans1982}
Leurgans, S. (1982).
\newblock Asymptotic distributions of slope-of-greatest-convex-minorant
  estimators.
\newblock {\em Ann. Statist.}, 10(1):287--296.

\bibitem[\protect\astroncite{{Lopuha{\"a}} and
  {Musta}}{2016}]{lopuhaa2016hellinger}
{Lopuha{\"a}}, H.~P. and {Musta}, E. (2016).
\newblock {A central limit theorem for the Hellinger loss of Grenander type
  estimators}.
\newblock {\em ArXiv e-prints}.

\bibitem[\protect\astroncite{Lopuha\"{a} and Musta}{2017}]{lopuhaa2017smooth}
Lopuha\"{a}, H.~P. and Musta, E. (2017).
\newblock Isotonized smooth estimators of a monotone baseline hazard in the
  {C}ox model.
\newblock {\em Journal of Statistical Planning and Inference}, 191:43 -- 67.

\bibitem[\protect\astroncite{Lopuha\"{a} and Musta}{2018a}]{lopuhaa2018naive}
Lopuha\"{a}, H.~P. and Musta, E. (2018a).
\newblock The distance between a naive cumulative estimator and its least
  concave majorant.
\newblock {\em Statistics \& Probability Letters}, 139:119 -- 128.

\bibitem[\protect\astroncite{Lopuha\"{a} and Musta}{2018b}]{lopuhaa2018smooth}
Lopuha\"{a}, H.~P. and Musta, E. (2018+b).
\newblock Smoothed isotonic estimators of a monotone baseline hazard in the
  {C}ox model.
\newblock {\em Scandinavian Journal of Statistics}, page to appear.

\bibitem[\protect\astroncite{Lopuha\"{a} and Nane}{2013a}]{lopuhaa2013breslow}
Lopuha\"{a}, H.~P. and Nane, G.~F. (2013a).
\newblock An asymptotic linear representation for the {B}reslow estimator.
\newblock {\em Communications in Statistics - Theory and Methods},
  42(7):1314--1324.

\bibitem[\protect\astroncite{Lopuha\"{a} and Nane}{2013b}]{lopuhaa2013shape}
Lopuha\"{a}, H.~P. and Nane, G.~F. (2013b).
\newblock Shape constrained non-parametric estimators of the baseline
  distribution in {C}ox proportional hazards model.
\newblock {\em Scandinavian Journal of Statistics}, 40(3):619--646.

\bibitem[\protect\astroncite{Mammen}{1991}]{mammen1991smooth}
Mammen, E. (1991).
\newblock Estimating a smooth monotone regression function.
\newblock {\em Ann. Statist.}, 19(2):724--740.

\bibitem[\protect\astroncite{McNichols and
  Padgett}{1982}]{mcnichols1982density}
McNichols, D. and Padgett, W. (1982).
\newblock Maximum likelihood estimation of unimodal and decreasing densities
  based on arbitrarily right-censored data.
\newblock {\em Communications in Statistics - Theory and Methods},
  11(20):2259--2270.

\bibitem[\protect\astroncite{Mukerjee}{1988}]{mukerjee1988smooth}
Mukerjee, H. (1988).
\newblock Monotone nonparametric regression.
\newblock {\em Ann. Statist.}, 16(2):741--750.

\bibitem[\protect\astroncite{Pfanzagl}{1982}]{pfanzagl1982}
Pfanzagl, J. (1982).
\newblock {\em Contributions to a general asymptotic statistical theory}.
\newblock Springer.

\bibitem[\protect\astroncite{Prakasa~Rao}{1969}]{rao1969density}
Prakasa~Rao, B. L.~S. (1969).
\newblock Estimation of a unimodal density.
\newblock {\em Sankhy\={a}: The Indian Journal of Statistics, Series A
  (1961-2002)}, 31(1):23--36.

\bibitem[\protect\astroncite{Prakasa~Rao}{1970}]{rao1970hazard}
Prakasa~Rao, B. L.~S. (1970).
\newblock Estimation for distributions with monotone failure rate.
\newblock {\em The Annals of Mathematical Statistics}, 41(2):507--519.

\bibitem[\protect\astroncite{Scharfstein and
  Robins}{2002}]{scharfstein2002informative}
Scharfstein, D.~O. and Robins, J.~M. (2002).
\newblock Estimation of the failure time distribution in the presence of
  informative censoring.
\newblock {\em Biometrika}, 89(3):617--634.

\bibitem[\protect\astroncite{Sen et~al.}{2010}]{sen2010bootstrap}
Sen, B., Banerjee, M., and Woodroofe, M. (2010).
\newblock Inconsistency of bootstrap: The grenander estimator.
\newblock {\em Ann. Statist.}, 38(4):1953--1977.

\bibitem[\protect\astroncite{Tsiatis}{2007}]{tsiatis2007}
Tsiatis, A. (2007).
\newblock {\em Semiparametric theory and missing data}.
\newblock Springer Science \& Business Media.

\bibitem[\protect\astroncite{van~der Laan and Robins}{2003}]{vanderlaan2003}
van~der Laan, M. and Robins, J. (2003).
\newblock {\em Unified methods for censored longitudinal data and causality}.
\newblock Springer.

\bibitem[\protect\astroncite{van~der Laan et~al.}{2007}]{vanderlaan2007super}
van~der Laan, M.~J., Polley, E.~C., and Hubbard, A.~E. (2007).
\newblock Super learner.
\newblock {\em Statistical Applications in Genetics and Molecular Biology},
  6(1).

\bibitem[\protect\astroncite{van~der Laan and Rose}{2011}]{vanderlaan2011tmle}
van~der Laan, M.~J. and Rose, S. (2011).
\newblock {\em Targeted learning: causal inference for observational and
  experimental data}.
\newblock Springer-Verlag New York.

\bibitem[\protect\astroncite{van~der Vaart and van~der
  Laan}{2006}]{van2006estimating}
van~der Vaart, A. and van~der Laan, M.~J. (2006).
\newblock Estimating a survival distribution with current status data and
  high-dimensional covariates.
\newblock {\em The International Journal of Biostatistics}, 2(1).

\bibitem[\protect\astroncite{van~der Vaart and Wellner}{1996}]{van1996weak}
van~der Vaart, A.~W. and Wellner, J.~A. (1996).
\newblock {\em Weak Convergence and Empirical Processes}.
\newblock Springer.

\bibitem[\protect\astroncite{Woodroofe and Sun}{1993}]{woodroofe1993penalized}
Woodroofe, M. and Sun, J. (1993).
\newblock A penalized maximum likelihood estimate of f(0+) when f is
  non-increasing.
\newblock {\em Statistica Sinica}, 3(2):501--515.

\bibitem[\protect\astroncite{Wright}{1981}]{wright1981}
Wright, F.~T. (1981).
\newblock The asymptotic behavior of monotone regression estimates.
\newblock {\em Ann. Statist.}, 9(2):443--448.

\bibitem[\protect\astroncite{Zeng}{2004}]{zeng2004dependent}
Zeng, D. (2004).
\newblock Estimating marginal survival function by adjusting for dependent
  censoring using many covariates.
\newblock {\em The Annals of Statistics}, 32(4):1533--1555.

\end{thebibliography}
}

\begin{figure}[p]
\centering
\includegraphics[width=6in]{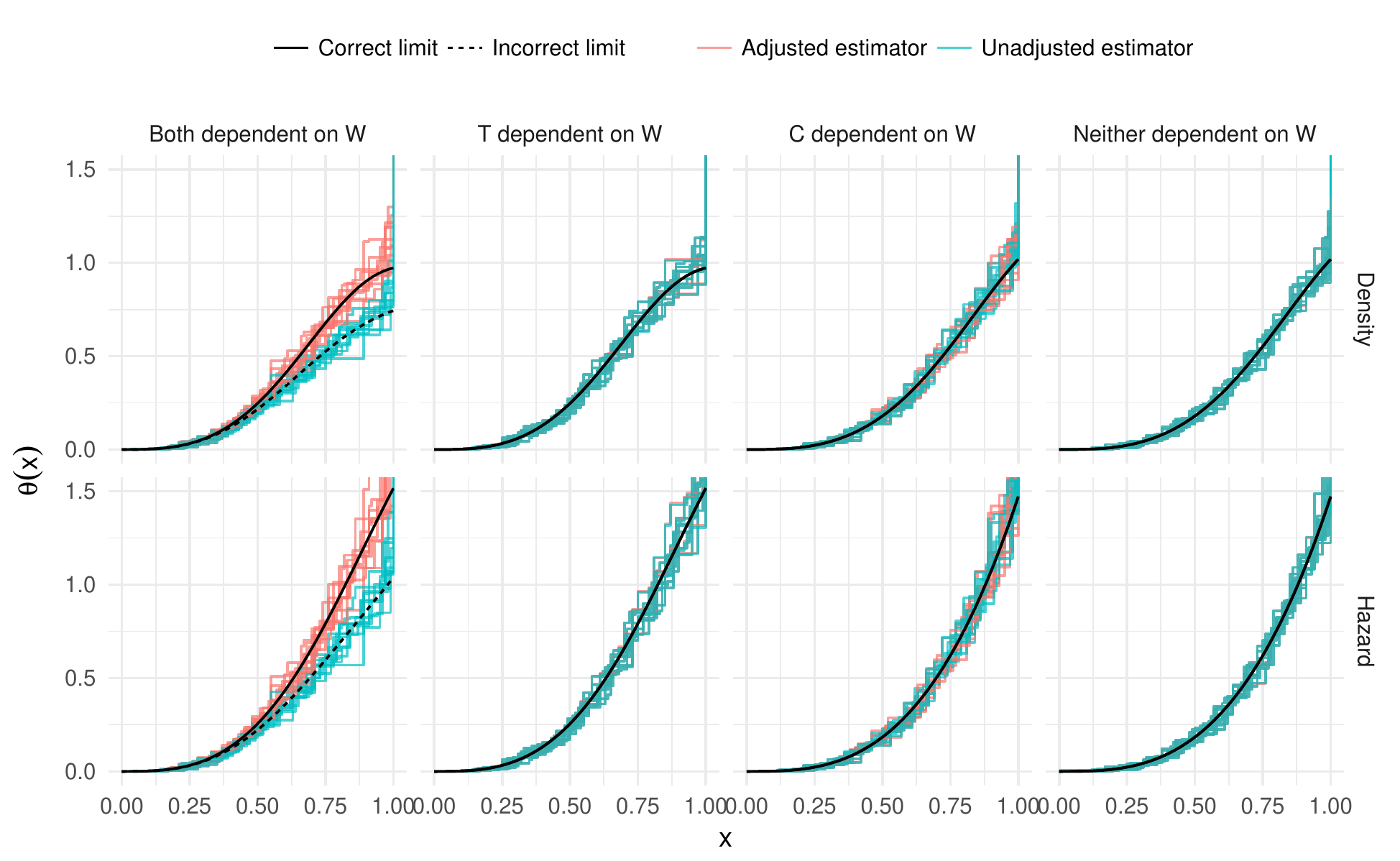}
\caption{Estimated monotone density and hazard functions based on 10 realizations of datasets including 5000 right-censored observations. Solid black lines are the true density and hazard functions. Dotted black lines indicate limit of unadjusted estimators. }
\label{fig:dens.haz.example}
\vspace{.1in}
\includegraphics[width=6in]{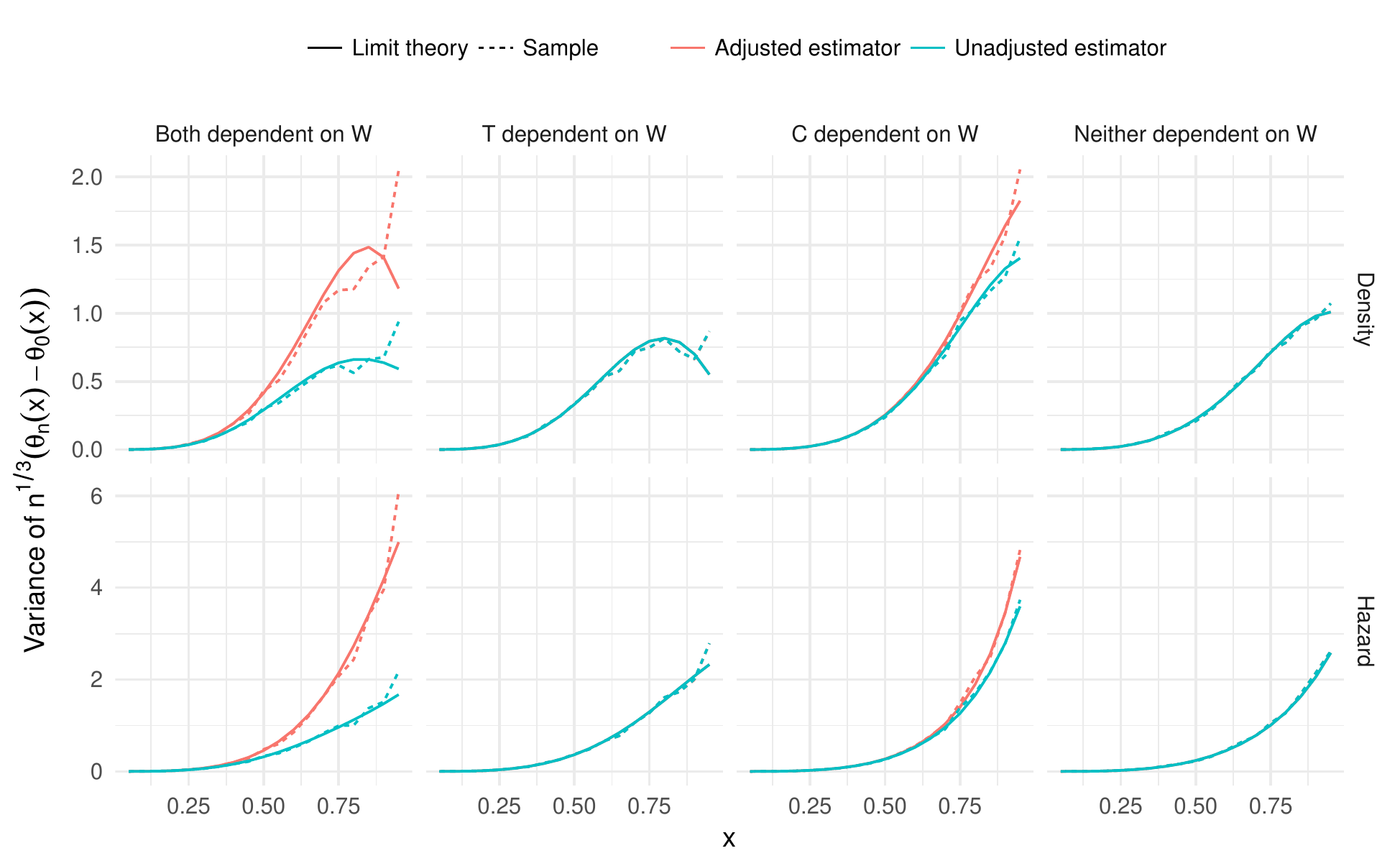}
\caption{Empirical variance over 1000 simulations of the standardized monotone density and hazard estimators and theoretical variance of the corresponding Chernoff limit distribution.}
\label{fig:dens.haz.vars}
\end{figure}

\begin{figure}[p]
\centering
\includegraphics[width=6in]{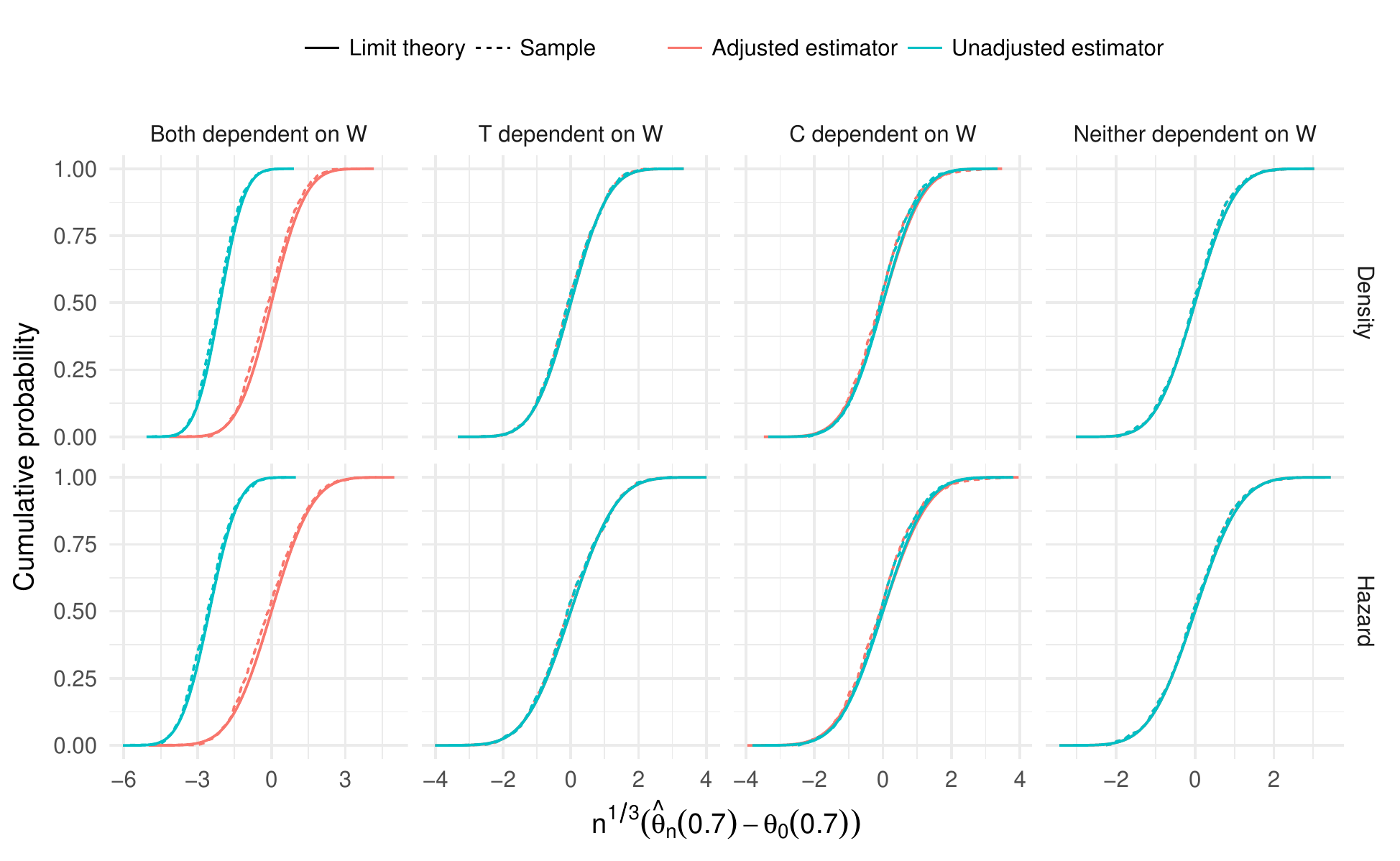}
\caption{Sampling distribution over 1000 simulations of the monotone density and hazard estimators at $x = 0.7$ and the corresponding theoretical scaled Chernoff limit distribution.}
\label{fig:dens.haz.dists}
\vspace{.1in}
\includegraphics[width=6in]{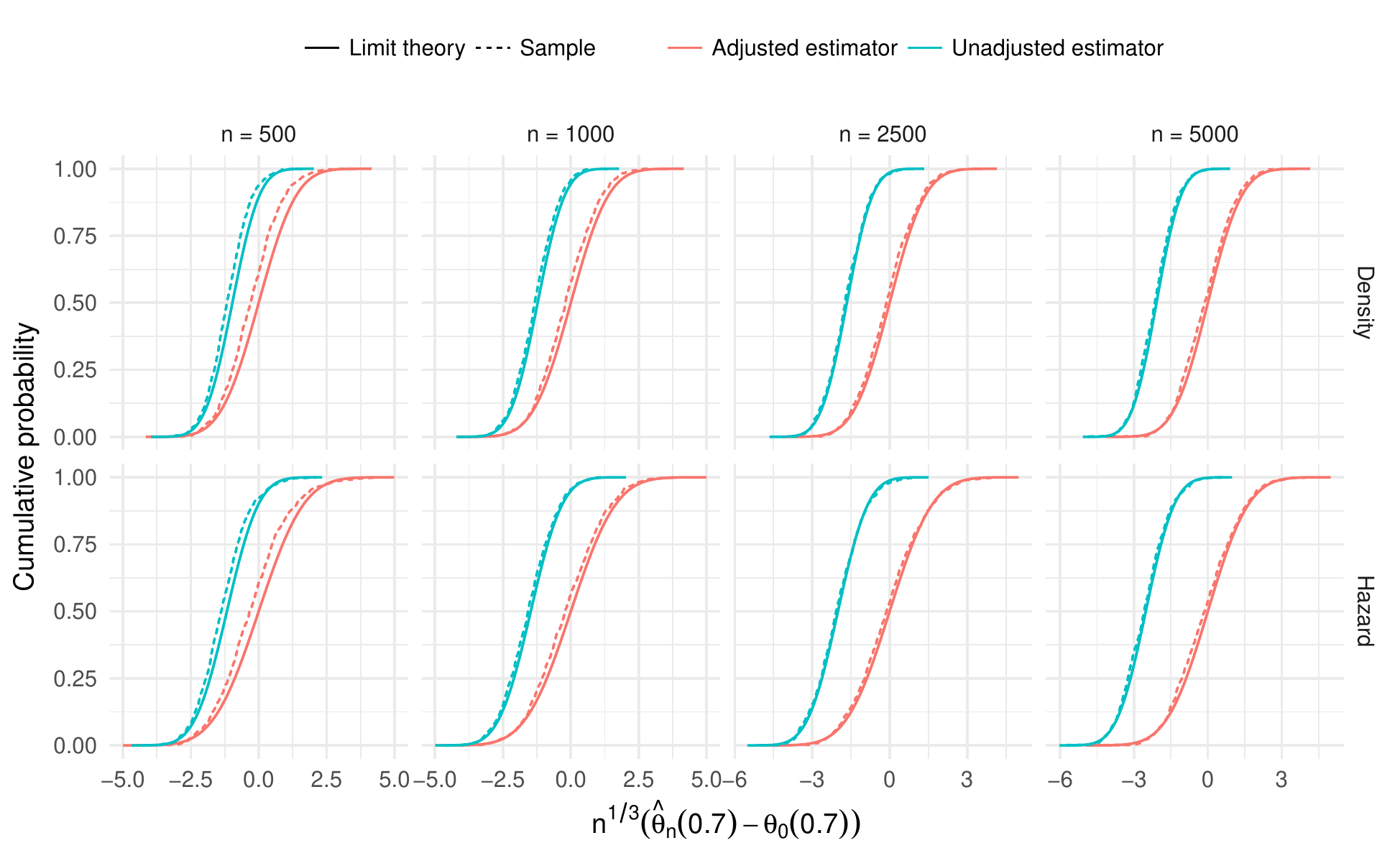}
\caption{Sampling distribution over 1000 simulations of the monotone density and hazard estimators at $x = 0.7$ and the corresponding theoretical scaled Chernoff limit distribution. These figures are based on the scenario wherein $T$ and $C$ depend on $W$.}
\label{fig:dens.haz.dists.n}
\end{figure}

\newpage
\section*{Appendix}
We begin by stating two lemmas we will require -- proofs are provided in Supplementary Material. The first lemma is a generalization of the switch relation first introduced in \cite{groeneboom1985density} and discussed in detail on page 296 of \cite{van1996weak}, on page 64 of \cite{van2006estimating}, in \cite{groene2014shape} and in  \cite{balabdaoui2011grenander}. For brevity, throughout, we will  refer to \cite{van1996weak} as VW.

\begin{lemma}\label{lemma:modified_switch}
Let $\Phi$ and $\Gamma$ be functions from a closed interval $I\subseteq \mathbb{R}$  to $[a, b]\subset \mathbb{R}$, where $\Phi$ is nondecreasing and c\`{a}dl\`{a}g , $\Gamma$ and $\Psi:=\Gamma \circ \Phi^-$ are lower semi-continuous, and $\{a,b\}\subset \Phi(I)$. Let $\psi$ be the left derivative of the GCM $\bar{\Psi}$ of $\Psi$ and $\theta := \psi \circ \Phi$. Then, for any $c \in \d{R}$ and $x \in I$ with $\Phi(x) \in (a, b)$, $\theta(x) > c$ if and only if $\sup \argmax_{v \in I^*} \left\{ c \Phi(v) - \Gamma(v) \right\} < \Phi^- (\Phi(x))$, where $I^* := I \cap \Phi^-([a,b]) = \{x \in I : x= \Phi^-(u), u \in [a,b]\}$.
\end{lemma}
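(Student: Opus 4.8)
The plan is to reduce the claim to the classical switch relation for the greatest convex minorant of a function on an interval and then transport it back through the transformation $\Phi$. First I would recall the basic fact underlying the switch relation: if $\bar\Psi$ is the GCM of a lower semi-continuous function $\Psi$ on the closed interval $[a,b]$, and $\psi=\partial_-\bar\Psi$ is its left derivative, then for any $c\in\mathbb R$ and any $u\in(a,b)$ one has $\psi(u)>c$ if and only if $\sup\argmax_{v\in[a,b]}\{cv-\Psi(v)\}<u$. This is precisely the statement on p.~296 of VW (and in the other references cited), and lower semi-continuity of $\Psi$ guarantees the argmax is attained so the supremum is well-defined. The work is then to translate both sides of this equivalence, applied at the point $u=\Phi^-(\Phi(x))$, into statements about $\Gamma$ and $\Phi$ on $I^*$.

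For the left-hand side, observe that $\theta(x)=\psi(\Phi(x))$ by definition of $\theta$. I would argue that $\psi(\Phi(x))=\psi(\Phi^-(\Phi(x)))$; this holds because $\Phi^-(\Phi(x))$ is either equal to $\Phi(x)$ or, when $\Phi(x)$ lies in a flat stretch of $\Phi^-$ — equivalently a jump of $\Phi$ — it is the left endpoint of that stretch, and at such a point the GCM's left derivative agrees with its value at $\Phi(x)$ since $\Psi=\Gamma\circ\Phi^-$ is constant there (note $\Phi^-$ is left-continuous, so $\Phi^-$ is constant on the relevant half-open interval). Hence $\theta(x)>c$ iff $\psi(\Phi^-(\Phi(x)))>c$, and since $\Phi(x)\in(a,b)$ forces $\Phi^-(\Phi(x))\in(a,b)$, the classical switch relation applies with $u=\Phi^-(\Phi(x))$, giving $\theta(x)>c$ iff $\sup\argmax_{v'\in[a,b]}\{cv'-\Psi(v')\}<\Phi^-(\Phi(x))$.

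For the right-hand side, I would show that the argmax of $v'\mapsto cv'-\Psi(v')$ over $[a,b]$ and the argmax of $v\mapsto c\Phi(v)-\Gamma(v)$ over $I^*$ are matched under the substitution $v'=\Phi(v)$, in the sense that their suprema coincide: on $I^*$, $\Phi$ maps onto (a subset of) $[a,b]$ whose closure is all of $[a,b]$ since $\{a,b\}\subset\Phi(I)$ and $\Gamma(v)=\Psi(\Phi(v))$ there, so $c\Phi(v)-\Gamma(v)=c\Phi(v)-\Psi(\Phi(v))$; conversely, for each $v'\in[a,b]$ the value $cv'-\Psi(v')$ equals $c\Phi(\Phi^-(v'))-\Gamma(\Phi^-(v'))$ at the point $\Phi^-(v')\in I^*$ by the definition $\Psi=\Gamma\circ\Phi^-$, up to care about jumps of $\Phi$, where $\Psi$ is affine-interpolated by the GCM but the maximum of the pre-GCM objective is unaffected. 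The key bookkeeping point is that $\sup\argmax_{v\in I^*}\{c\Phi(v)-\Gamma(v)\}<\Phi^-(\Phi(x))$ is equivalent to $\sup\argmax_{v'\in[a,b]}\{cv'-\Psi(v')\}<\Phi^-(\Phi(x))$: the image under $\Phi$ of a maximizer $v\in I^*$ is a maximizer $v'=\Phi(v)$, and $\Phi(v)<\Phi(\Phi^-(\Phi(x)))=\Phi(x)$ corresponds to $v<\Phi^-(\Phi(x))$ by monotonicity and the definition of $I^*$. Combining the two translations yields the stated equivalence.

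The main obstacle I anticipate is the careful handling of jumps of $\Phi$ — equivalently flat stretches of $\Phi^-$ — where $\Psi=\Gamma\circ\Phi^-$ is piecewise constant and the GCM linearly interpolates across the gap. One must check that (i) the left derivative $\psi$ of $\bar\Psi$ at $\Phi(x)$ genuinely equals its value at the left endpoint $\Phi^-(\Phi(x))$ of the flat stretch containing $\Phi(x)$, using left-continuity of $\Phi^-$ and of $\partial_-$; and (ii) that passing between $\sup\argmax$ over $[a,b]$ and over $I^*$ does not introduce or destroy maximizers near such gaps, which requires that the pre-GCM objective $v'\mapsto cv'-\Psi(v')$ attains its maximum at a point in $\Phi(I^*)$, a consequence of $\Psi$ being lower semi-continuous and constant on each such gap so that the max over the gap is attained at an endpoint. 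Once these two points are nailed down, the remainder is a direct substitution argument. I would also note at the outset that the hypothesis that $\Gamma$ and $\Psi$ are lower semi-continuous is exactly what is needed for all relevant argmaxes to be attained, so that every "$\sup\argmax$" in the statement is over a nonempty compact set.
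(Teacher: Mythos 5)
Your overall strategy is the same as the paper's: invoke the classical switch relation for $\partial_-\mathrm{GCM}_{[a,b]}(\Psi)$ and then transport the $\argmax$ from $[a,b]$ to $I^*$ through the correspondence $v'=\Phi(v)$, $v=\Phi^-(v')$, treating the flat stretches of $\Phi^-$ (jumps of $\Phi$) by noting the objective is affine there. However, your left-hand-side step contains a genuine confusion of domains. The function $\psi$ lives on $[a,b]$, while $\Phi^-(\Phi(x))$ is a point of $I$; the claimed identity $\psi(\Phi(x))=\psi(\Phi^-(\Phi(x)))$ is therefore ill-typed, and the intermediate equivalence you derive, $\theta(x)>c$ iff $\sup\argmax_{v'\in[a,b]}\{cv'-\Psi(v')\}<\Phi^-(\Phi(x))$, compares a point of $[a,b]$ with a point of $I$ and is not meaningful. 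The fix is simply to delete that step: apply the switch relation at $u=\Phi(x)\in(a,b)$ directly, obtaining the threshold $\Phi(x)$ for the $\argmax$ over $[a,b]$. The threshold $\Phi^-(\Phi(x))$ appears only afterwards, when the $\argmax$ has been moved to $I^*$, via the equivalence $\Phi(v)<\Phi(x)$ iff $v<\Phi^-(\Phi(x))$ (using that $\Phi$ is right-continuous and nondecreasing) --- an equivalence you do state correctly in your right-hand-side bookkeeping, so the repair is local.

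One further point to sharpen when you write this out: to replace $\sup\argmax$ over $[a,b]$ by $\sup\argmax$ over $J^*:=[a,b]\cap\Phi(I)$ you need more than ``the maximum is attained at some point of $\Phi(I^*)$''; you need that the \emph{supremum} of the maximizing set is unchanged (or at least that its position relative to $\Phi(x)$ is unchanged). The paper handles this by a sign-of-$c$ case analysis on each interval where $\Phi^-$ is constant: for $c\neq 0$ the affine objective is strictly monotone there, so interior points of such intervals cannot be maximizers, and for $c=0$ the objective is constant there, so taking the supremum of the $\argmax$ lands at the right endpoint, which lies in the range of $\Phi$ (here the hypothesis $\{a,b\}\subset\Phi(I)$ is used). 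Your argument should make this explicit rather than only asserting attainment of the maximum in $\Phi(I^*)$.
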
 The switch relation requires $\Psi$ to be lower semi-continuous. If $\Psi_n$ is not so, it can be replaced by its greatest lower semi-continuous minorant. As argued in \cite{van2006estimating}, this only possibly changes the GCM at the endpoints of the interval and has no effect on asymptotic properties (e.g., weak convergence of $\Psi_n$). In the second lemma, pointwise and uniform finite-sample tail bounds are provided. These tail bounds are not sharp but suffice to derive consistency results in broad generality. Simpler tail bounds can be derived in the absence of a transformation $\Phi_0$.

\begin{lemma}\label{tail_bound} Suppose that $|\Phi_0^{-}(u) - x|\leq \gamma(|u - \Phi_0(x)|)$ for all $u \in J_0$ and a continuous, strictly increasing function $\gamma:\mathbb{R}^+\rightarrow\mathbb{R}^+$ with $\gamma(0)=0$, and that $\Phi_0$ is strictly increasing and continuous on $[x - \delta, x + \delta] \subset \Phi_0^{-1}(J_n)$. Let $\omega:\mathbb{R}^+\rightarrow\mathbb{R}^+$ be a non-decreasing function satisfying $\lim_{z\downarrow 0} \omega(z) = \omega(0)=0$, and suppose $|\theta_0(u) - \theta_0(x)| \leq \omega(|u-x)|)$.  Define $c(\delta, \eta) := \gamma^{-1}\left( \delta \wedge \omega^-(\eta) \right)$ and $r(\delta, \eta) := \int_0^{c(\delta, \eta) / 2} \left[ \eta -\omega(\gamma(u)) \right]du$. Then, for any $\eta>0$ and $x \in I$ such that $\Phi_n(x), \Phi_0(x) \in (0, u_n)$,
\begin{align*} 
P_0\left(|\theta_n(x) - \theta_0(x)| > \eta \right) \leq  P_0\left(A_{n,1}(\eta) > r(\delta/2, \eta/2) \right)+ P_0\left(A_{n,2} \geq  c(\delta / 2, \eta/ 2)  \right)
\end{align*}
with $A_{n,1}(\eta):=2\|\Gamma_n - \Gamma_0\|_{\infty, I_n} + (2|\theta_0(x)| + \eta)\|\Phi_n - \Phi_0\|_{\infty, I_n}$ and $A_{n,2}:=4\|\Phi_n - \Phi_0\|_{\infty, [x-\delta, x + \delta]}$.
If $\Phi_0$ is strictly increasing and continuous on $I$, $|\Phi_0^{-1}(u) - \Phi_0^{-1}(v)| \leq \gamma(|u - v|)$ for all $u,v \in J_0$, and $|\theta_0(u) - \theta_0(v)| \leq \omega(|u-v)|)$ for all $u, v\in I$, then, for any $\eta, \beta > 0$,
\begin{align*}
 P_0\left(\| \theta_n - \theta_0\|_{\infty, I_{n,\beta}} > \eta \right) \leq  P_0\left(B_{n,1}(\eta) >r(\beta/2, \eta/2) \right)+ P_0\left(B_{n,2} \geq  c(\beta/2, \eta/2)\right)
\end{align*} with $B_{n,1}(\eta):=2\|\Gamma_n - \Gamma_0\|_{\infty, I_n} + (2\|\theta_0\|_{\infty, I} + \eta)\|\Phi_n - \Phi_0\|_{\infty, I_n}$ and $B_{n,2}:=4\|\Phi_n - \Phi_0\|_{\infty, I}$.
\end{lemma}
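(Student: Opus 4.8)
\emph{Proof strategy.} The plan is to pass, via the generalized switch relation of Lemma~\ref{lemma:modified_switch}, from each tail event for $\theta_n(x)-\theta_0(x)$ to a statement about the location of the maximizer of a perturbed primitive process, and then to show that this maximizer is forced onto the ``wrong'' side of $x$ only when $\Gamma_n$ and $\Phi_n$ deviate appreciably from $\Gamma_0$ and $\Phi_0$ over $I_n$, or $\Phi_n$ deviates appreciably from $\Phi_0$ in a neighborhood of $x$. Throughout, I would replace $\Gamma_n$ and $\Psi_n := \Gamma_n\circ\Phi_n^-$ by suitable lower semi-continuous minorants if necessary, which (as in \cite{van2006estimating}) does not change $\theta_n$ except possibly at the endpoints of $J_n$, so that Lemma~\ref{lemma:modified_switch} applies with $\Phi=\Phi_n$, $\Gamma=\Gamma_n$ and $[a,b]=J_n$; the union bound $P_0(|\theta_n(x)-\theta_0(x)|>\eta)\le P_0(\theta_n(x)>\theta_0(x)+\eta)+P_0(\theta_n(x)<\theta_0(x)-\eta)$ then reduces everything to a one-sided analysis.

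For the upper tail I would set $c:=\theta_0(x)+\eta$, $\Xi_n := c\Phi_n - \Gamma_n$ and $\Xi_0 := c\Phi_0 - \Gamma_0$. By Lemma~\ref{lemma:modified_switch} the event $\{\theta_n(x)>c\}$ coincides with $\{\hat v_n<\Phi_n^-(\Phi_n(x))\}$, where $\hat v_n:=\sup\argmax_{v\in I_n}\Xi_n(v)$; in particular $\hat v_n\le x$ and $\Xi_n(\hat v_n)\ge\Xi_n(v^\sharp)$ for any comparison point $v^\sharp\in I_n$. I would choose $v^\sharp:=\Phi_n^-(\Phi_n(x)+\rho_n)$, with $\rho_n$ a controlled multiple of $c(\delta/2,\eta/2)$ in the $\Phi_n$-domain; the inverse bound $|\Phi_0^-(u)-x|\le\gamma(|u-\Phi_0(x)|)$, the inclusion $[x-\delta,x+\delta]\subset\Phi_0^{-1}(J_n)$ and the definition of $c(\cdot,\cdot)$ ensure that $v^\sharp$ is well-defined, lies in $I_n$, and satisfies $v^\sharp\ge x$ off the event $\{A_{n,2}\ge c(\delta/2,\eta/2)\}$, which is the term that gets peeled away. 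On its complement I would quantify the population separation: using $\Gamma_0=\Psi_0\circ\Phi_0$ with $\partial_-\Psi_0=\psi_0$ and $\theta_0=\psi_0\circ\Phi_0$,
\[ \Xi_0(v^\sharp)-\Xi_0(v) \;=\; \int_{\Phi_0(v)}^{\Phi_0(v^\sharp)}\big[c-\psi_0(\tau)\big]\,d\tau \quad\text{for }v\le v^\sharp, \]
so monotonicity of $\psi_0$ gives $\Xi_0(v)\le\Xi_0(x)$ for $v\le x$, while for $\tau\in[\Phi_0(x),\Phi_0(v^\sharp)]$ the bounds $|\Phi_0^-(\tau)-x|\le\gamma(\tau-\Phi_0(x))$ and $|\theta_0(u)-\theta_0(x)|\le\omega(|u-x|)$ yield $c-\psi_0(\tau)\ge\eta-\omega(\gamma(\tau-\Phi_0(x)))$; substituting $w=\tau-\Phi_0(x)$ and using $\Phi_0(v^\sharp)-\Phi_0(x)\ge c(\delta/2,\eta/2)/2$ off $\{A_{n,2}\ge c(\delta/2,\eta/2)\}$ produces $\Xi_0(v^\sharp)-\Xi_0(\hat v_n)\ge r(\delta/2,\eta/2)$. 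Since $|c|\le|\theta_0(x)|+\eta$, one has $\|\Xi_n-\Xi_0\|_{\infty,I_n}\le\|\Gamma_n-\Gamma_0\|_{\infty,I_n}+(|\theta_0(x)|+\eta)\|\Phi_n-\Phi_0\|_{\infty,I_n}$, so $\Xi_n(\hat v_n)-\Xi_n(v^\sharp)\le A_{n,1}(\eta)-r(\delta/2,\eta/2)$ after tracking constants; combined with $\Xi_n(\hat v_n)\ge\Xi_n(v^\sharp)$, the upper tail forces $A_{n,1}(\eta)\ge r(\delta/2,\eta/2)$ off $\{A_{n,2}\ge c(\delta/2,\eta/2)\}$, which is the asserted bound for this tail.

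For the lower tail $\{\theta_n(x)<\theta_0(x)-\eta\}$ I would first use that $\theta_n$ is nondecreasing to propagate the strict underestimate to every point of a left-neighborhood of $x$ on which $\theta_0$ varies by less than $\eta$, and then apply Lemma~\ref{lemma:modified_switch} at such a point with the correspondingly reduced slope; this reduces matters to the same geometry as above, now with the comparison point taken to the left of $\Phi_n^-(\Phi_n(x))$ and with $\Xi_0$ decreasing there, and produces the same bound with $A_{n,1},A_{n,2}$ unchanged. The uniform statement follows by running the identical argument at an arbitrary $z\in I_{n,\beta}$, with $\delta$ replaced by $\beta$, the local moduli of $\theta_0$ and $\Phi_0^-$ at $x$ replaced by the stated global ones, and $\|\cdot\|_{\infty,[x-\delta,x+\delta]}$ replaced by $\|\cdot\|_{\infty,I}$; the definition of $I_{n,\beta}$ guarantees that the comparison points stay in $I_n$ uniformly over $z\in I_{n,\beta}$, so taking the supremum over $z$ yields the second display with $B_{n,1},B_{n,2}$ in place of $A_{n,1},A_{n,2}$.

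The hard part will be the interaction with the \emph{data-dependent} isotonization domain: one must ensure that the comparison point $v^\sharp$ genuinely lies in $I_n$ and on the correct side of $\Phi_n^-(\Phi_n(x))$, even though $\Phi_n^-(\Phi_n(x))$ need not equal $x$, $I_n$ may be a finite set, and $u_n$ need not equal $u_0$; controlling this is exactly what forces the event $\{A_{n,2}\ge c(\delta/2,\eta/2)\}$ (respectively $\{B_{n,2}\ge\cdots\}$) and why the relevant sup-norm of $\Phi_n-\Phi_0$ is taken over a neighborhood of $x$ (respectively over all of $I$) and not merely over $I_n$. Carrying the constants --- the halvings $\delta/2,\eta/2$, the factor $2$ in $A_{n,1}$ and the factor $4$ in $A_{n,2}$ --- consistently through the change of variables and the two-sided comparison is the remaining bookkeeping; the switch relation, the monotonicity of $\Xi_0$ to the left of $x$ through $\psi_0$, and the sup-norm bound on $\Xi_n-\Xi_0$ are routine.
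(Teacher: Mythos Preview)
Your strategy is essentially the one the paper uses: apply the generalized switch relation (Lemma~\ref{lemma:modified_switch}), write the maximand as a population piece plus a uniformly small remainder, and compare the maximizer against a strategically placed point in $I_n$ so that the population separation dominates unless $\Gamma_n,\Phi_n$ deviate substantially. The integral expression $\int[c-\psi_0(\tau)]\,d\tau$ and its bound via $\omega\circ\gamma$ are exactly what the paper obtains.

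Two points of difference are worth noting. First, your comparison point $v^\sharp=\Phi_n^-(\Phi_n(x)+\rho_n)$ is defined through $\Phi_n$, whereas the paper takes $v_{n,\eta,t}^+:=\sup\{v\in I_n:v\ge x,\ \omega(\gamma(\Phi_0(v)-t))\le\eta\}$, the \emph{furthest} point in $I_n$ to the right of $x$ still within the $\psi_0$-modulus window. The paper's choice makes the subsequent dichotomy cleaner: either $\Phi_0(v_{n,\eta,t}^+)-t$ is large (giving the separation $\ge r(\cdot,\cdot)$), or there is a gap in $I_n$ on which $\Phi_n$ is constant while $\Phi_0$ is strictly increasing, which directly yields $\|\Phi_n-\Phi_0\|_{\infty,[x-\delta,x+\delta]}\ge c(\cdot,\cdot)/4$. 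Your $v^\sharp$ can be made to work, but showing $v^\sharp\in[x-\delta,x+\delta]$ and $\Phi_0(v^\sharp)-\Phi_0(x)\ge c(\cdot,\cdot)/2$ simultaneously requires more shuttling between $\Phi_n$ and $\Phi_0$ than you indicate.

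Second, the specific halvings $\delta/2,\eta/2$ in the final bound arise in the paper from an explicit case split on whether $x\in I_n$. When $x\notin I_n$, the paper sets $x_n:=\Phi_n^-(\Phi_n(x))\in I_n$, notes $\theta_n(x)=\theta_n(x_n)$, and bounds $|\theta_n(x)-\theta_0(x)|$ via $|\theta_n(x_n)-\theta_0(x_n)|+|\theta_0(x_n)-\theta_0(x)|$; the first term is handled by the $x\in I_n$ argument at $x_n$ with $\eta/2$ and $\delta/2$, and the second by the modulus $\omega$ provided $x-x_n$ is small (otherwise $\Phi_n$ is flat on $[x_n,x]$, again forcing $A_{n,2}$ large). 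You allude to this issue but do not lay out the mechanism; this is where the exact constants come from, so your sketch leaves the bookkeeping incomplete. Your lower-tail ``propagation'' step is also unnecessary: the paper simply runs the symmetric argument with a left-side comparison point $v_{n,\eta,t}^-$, which is what you end up doing anyway.
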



\begin{proof}[\bfseries Proof of Theorems \ref{thm:consistency} and \ref{thm:rates}]
For part 1 of Theorem~\ref{thm:consistency} and parts 1 and 2 of Theorem~\ref{thm:rates}, we use the pointwise tail bound in Lemma~\ref{tail_bound} with different choices of $\omega$ and $\gamma$. Since $u_n \inproblow u_0$,  $\Phi_0(x) \in (0, u_0)$ and $\Phi_n(x) \inproblow \Phi_0(x)$, with probability tending to one, $\Phi_n(x)\in(0, u_n)$ and $[x-\delta', x + \delta'] \subset \Phi_0^{-1}(J_n)$ for some $\delta' > 0$. For part 2 of Theorem~\ref{thm:consistency} and part 3 of Theorem~\ref{thm:rates}, we use instead the uniform tail bound. We note that for any $\delta, \eta > 0$, $c(\delta, \eta) > 0$ and $r(\delta, \eta) > 0$, which we show in the proof of Lemma~\ref{tail_bound}.

For part 1 of Theorem \ref{thm:consistency}, we take $\omega(v) := [\theta_0(x+v) - \theta_0(x)]\vee[\theta_0(x) - \theta_0(x - v)]$, which is a valid choice since $\theta_0$ is non-decreasing and continuous at $x$. Since $\Phi_0$ is continuous and strictly increasing in a neighborhood of $x$, so is $\Phi_0^-$. Since $J_0$ is bounded, such an invertible $\gamma$ exists. By the pointwise tail bound in Lemma~\ref{tail_bound}, both $A_{n,1}(\eta)$ and $A_{n,2}$  are $\fasterthan(1)$ by assumption, and the result follows.

For part 1 of Theorem \ref{thm:rates}, we consider the pointwise tail bound with $\eta=\eta_n:= \eta_0 r_n^{-\alpha_1\alpha_2 / (\alpha_1\alpha_2 + 1)}$. By assumption, $\omega(v) := K_1(x) v^{\alpha_1}$ and $\gamma(v) := K_2(x) v^{\alpha_2}$ are valid choices. Since $\delta > 0$ and $\eta_n \to 0$, $c(\delta/2, \eta_n/2) \sim \eta_n^{1/(\alpha_1\alpha_2)} =\eta_0 r_n^{-1/(1+\alpha_1\alpha_2)}$ for large $n$. Thus, the second term of the upper bound is $P_0(4r_n\|\Phi_n - \Phi_0\|_{\infty, [x-\delta, x + \delta]} \geq \eta^2_0\eta_n^{-1})$ for large $n$. Since $r_n\|\Phi_n - \Phi_0\|_{\infty, [x-\delta, x + \delta]} = \bounded(1)$ and $\eta_n=o(1)$ by assumption, this term tends to zero. Because $r(\delta/2, \eta_n/2) \sim r_n^{-1}$, the first term of the upper bound is bounded for any $\eta_0 > 0$ as $\|\Gamma_n - \Gamma_0\|_{\infty, I_n}$ and $\|\Phi_n - \Phi_0\|_{\infty, I_n}$ are both $\bounded(r_n^{-1})$ by assumption.

For part 2 of Theorem \ref{thm:rates}, we take $\omega(v) := 0$ for $v \leq \delta$ since $\theta_0$ is constant on $[x-\delta, x + \delta]$. As before, since $\Phi_0$ is continuous and strictly increasing in a neighborhood of $x$, such an invertible $\gamma$ exists. Letting $\eta=\eta_n := \eta_0 r_n^{-1}$, we have $c(\delta/2, \eta_n/2) = \gamma^{-1}(\delta/2) > 0$ for all $n$, so the second term of the upper bound tends to zero. Since $r(\delta/2, \eta_n/2) \sim r_n^{-1}$, the first term of the upper bound is bounded.

For part 2 of Theorem~\ref{thm:consistency}, since it is uniformly continuous, $\theta_0$ admits a uniform modulus of continuity, which we choose as $\omega$. Since $\Phi_0$ is strictly increasing and continuous, $\Phi_0^-$ is well-defined and continuous. Since $J_0$ is compact, $\Phi_0^{-1}$ is uniformly continuous and possesses a continuous and invertible uniform modulus of continuity, which we choose as $\gamma$. Thus, $c(\beta/2, \eta/2) > 0$ and $r(\beta/2, \eta/2)>0$ for any $\beta, \eta > 0$, and so, both terms in the uniform upper bound tend to zero by assumption.


For part 3 of Theorem~\ref{thm:rates}, we consider the uniform tail bound with $\eta=\eta_n:=\eta_0 r_n^{-\alpha_1\alpha_2 / (\alpha_1\alpha_2 + 1)}$. By assumption, $\omega(v) := K_1 v^{\alpha_1}$ and $\gamma(v) = K_2 v^{\alpha_2}$ are valid choices. With probability tending to one, $\beta_n > r_n^{-1/(1+\alpha_1\alpha_2)}$, $c(\beta_n/2, \eta_n/2) \sim r_n^{-1/(1+\alpha_1\alpha_2)}$, and so, $r_n c(\beta_n/2, \eta_n/2)$ tends to $+\infty$ in probability. Thus, the second term in the upper bound tends to zero. Since $r(\beta_n/2, \eta_n/2) \sim r_n^{-1}$, the first term in the upper bound is bounded for any $\eta_0>0$.  
\end{proof}

\begin{proof}[\bfseries Proof of Theorem \ref{thm:conv_in_dist}]
We note that $r_n\left[\theta_n(x) - \theta_0(x)\right] > \eta$ if and only if $\theta_n(x) > \theta_0(x) + r_n^{-1} \eta$, which, by Lemma \ref{lemma:modified_switch}, occurs if and only if $\sup\argmax_{v \in I_n}\left\{ [ \theta_0(x) + r_n^{-1} \eta ]\Phi_n(v) -\Gamma_n(v)\right\} < \Phi_n^-( \Phi_n(x))$. The latter event occurs if and only if 
\[\sup\argmax_{v \in c_n(I_n - x)}\left\{ \left[ \theta_0(x) + c_n^{-\alpha} \eta \right]\Phi_n(x + c_n^{-1} v) -\Gamma_n(x + c_n^{-1} v)\right\} < c_n\left[\Phi_n^-( \Phi_n(x))  - x\right].\]
Since adding terms not depending on $v$ and scaling by constants does not affect the value of the maximizer,  the left-hand side of the inequality above equals
$
\sup\argmax_{v\in c_n(I_n-x)}\left\{H_{n,x,\eta}(v) + R_{n}(v)\right\}
$
for $H_{n,x, \eta}(v) := -W_{n,x}(v) + \left[\eta \Phi_0'(x) \right] v - \left[\Phi_0'(x) \pi_0(x)(\alpha + 1)^{-1} \right] |v|^{\alpha + 1}$ and $R_{n}(v) = R_{n,1}(v)+ R_{n,2}(v)- R_{n,3}(v)$, with
\begin{align*}
R_{n, 1}(v)\ &:=\ c_n \eta \left[\Phi_n(x + c_n^{-1} v) - \Phi_0(x +  c_n^{-1}v) \right];\\
R_{n,2}(v)\ &:=\ c_n\eta \left[ \Phi_0(x + c_n^{-1}v) - \Phi_0(x) - \Phi_0'(x) (c_n^{-1}v ) \right];\\
R_{n,3}(v)\ &:=\ c_n^{\alpha + 1} \left[ M_{0,x}(c_n^{-1}v)- \Phi_0'(x)\pi_0(x)(\alpha + 1)^{-1}|c_n^{-1} v|^{\alpha + 1}\right]\ ,
\end{align*}
where we define $M_{0,x}(u):=[\Gamma_0(x + u) - \theta_0(x) \Phi_0(x + u)] - [\Gamma_0(x) - \theta_0(x) \Phi_0(x)]$. By Slutsky's Theorem, $\left\{  H_{n,x, \eta}(v) : |v| \leq M\right\}$ converges weakly to $\{ H_{x,\eta}(v) : |v| \leq M\}$ in $\ell^{\infty}[-M,M]$ for every $M > 0$ for $H_{x,\eta}(v) := -W_{x}(v) + \left[\eta \Phi_0'(x) \right] v - \left[\pi_0(x) \Phi_0'(x)(\alpha + 1)^{-1}\right] |v|^{\alpha + 1}.$ By the uniform consistency of $\Phi_n$ to $\Phi_0$ at rate faster than $c_n^{-1}$ in a neighborhood of $x$,  $\sup_{|v| \leq M} |R_{n,1}(v)| = \fasterthan(1)$ for all $M > 0$. Continuous differentiability of $\Phi_0$ at $x$ gives $\sup_{|v| \leq M} |R_{n,2}(v)| = \fasterthandet(1)$ for all $M > 0$. For $R_{n,3}$, clearly, $M_{0,x}(0) = 0$, and $M_{0,x}'(u) = \Phi_0'(x + u) [\theta_0(x + u) -\theta_0(x)]$ for $u$ in a neighborhood of $0$, so that $M_{0,x}'(0) = 0$. Furthermore, $|M_{0,x}'(u)| / |u|^\alpha \to \Phi_0'(x) \pi_0(x)$ as $u \to 0$ by the assumed order of growth of $\theta_0$ and continuity of $\Phi_0'$ at $x$. Therefore, by L'H\^{o}pital's rule, $\lim_{u \to 0} M_{0,x}(u) / |u|^{\alpha + 1} = (\alpha + 1)^{-1}\lim_{u \to 0} \n{sign}(u)M_{0,x}'(u) / |u|^{\alpha} = (\alpha + 1)^{-1}\lim_{u \to 0}|M_{0,x}'(u)| / |u|^{\alpha} =  (\alpha + 1)^{-1}\Phi_0'(x) \pi_0(x)$. It follows that $\sup_{|v| \leq M} |R_{n,3}(v)| = \fasterthandet(1)$ for all $M > 0$. In view of these findings, we have that $\{H_{n,x, \eta}(v) + R_n(v) : |v| \leq M\}$ converges weakly to $\{ H_{x,\eta}(v) : |v| \leq M\}$ for every $M > 0$. Since there is a neighborhood of $x$ in which $\Phi_0$ is strictly increasing and $\Phi_n^-$ is uniformly consistent, $c_n(I_n - x) \to \d{R}$ in probability.  Therefore, the argmax continuous mapping theorem (Theorem 3.2.2 of VW) implies that
\[\hat{v}_{n}(x,\eta) := \sup \argmax_{v \in c_n(I_n - x)} \{  H_{n, x, \eta}(v) + R_{n}(v)\} \indist \sup\argmax_{v \in \d{R}} \{H_{x, \eta}(v)\} =: \hat{v}(x,\eta)\]
as long as $\hat{v}_{n}(x, \eta) = \bounded(1)$, where we have used the assumptions that $ \sup\argmax_{v \in \d{R}}\{H_{x,\eta}(v)\}$ is bounded in probability and that $W_{x}$ is almost surely lower semi-continuous. Lemma 3 of the Supplementary Material establishes that $\hat{v}_{n}(x, \eta) = \bounded(1)$ under the stated conditions. Since $c_n\sup_{|u - x| \leq \delta} |\Phi_n(u) - \Phi_0(u)| =\fasterthan(1)$ by assumption and $\Phi_0$ is continuously differentiable at $x$ with positive derivative, $c_n[\Phi_n^-(\Phi_n(x)) - x] =\fasterthan(1)$. Thus, we find that
\[P_0\left( r_n\left[\theta_n(x) - \theta_0(x)\right] > \eta \right) = P_0\left( \hat{v}_n(x,\eta) -c_n\left[\Phi_n^-(\Phi_n(x)) - x\right] < 0 \right) \longrightarrow P_0\left(\hat{v}(x,\eta) < 0\right)\ .\]
We note that 
\[ \hat{v}(x, \eta) =  \sup\argmax_{v \in \d{R}} \left\{-W_{x}(v) - \left[\frac{\pi_0(x) \Phi_0'(x)}{\alpha + 1} \right] |v|^{\alpha + 1} - \left[-\eta \Phi_0'(x) \right] v \right\}\ .\]
Thus, by the standard switch relation (e.g.\ Lemma 3.2 of \citealp{groene2014shape}), $\hat{v}(x, \eta) < 0$ if and only if
\[  \Phi_0'(x)^{-1} \partial_- \n{GCM}_{\d{R}}\left\{ W_{x}(v) + \left[\frac{\pi_0(x) \Phi_0'(x)}{\alpha + 1} \right] |v|^{\alpha + 1} \right\}(0) > \eta\ .\]
where we have again used that $W_x$ is almost surely lower semi-continuous. Therefore, 
\[ P_0\left(\hat{v}(x,\eta) < 0\right) = P_0\left( \Phi_0'(x)^{-1} \partial_- \n{GCM}_{\d{R}}\left\{ W_{x}(v) + \left[\frac{ \pi_0(x) \Phi_0'(x)}{\alpha + 1}\right] |v|^{\alpha + 1} \right\}(0) > \eta\right) .\]
The result follows from the Portmanteau Theorem.

If $W_x$ has stationary increments and $\alpha = 1$ so that $\pi_0(x) = \theta_0'(x)$, then
\begin{align*}
 P_0\left(\hat{v}(x,\eta) < 0\right)  &=P_0\left(-\theta_0'(x) \argmin_{u \in \d{R}} \left\{W_{x}(u+\eta/\theta_0'(x)) + \tfrac{1}{2}\theta_0'(x) \Phi_0'(x)  u^2 \right\} > \eta \right)\\
 &=P_0\left(-\theta_0'(x) \argmin_{u \in \d{R}} \left\{W_{x}(u) + \tfrac{1}{2}\theta_0'(x) \Phi_0'(x)  u^2 \right\} > \eta \right)
\end{align*}
for each $\eta$, and the result again follows by the Portmanteau Theorem. Finally, if $W_{x} =[\kappa_0(x)]^{1/2} W_0$ for $W_0$ a standard two-sided Brownian motion, a standard argument (see Problem 3.2.5 of VW) shows that 
\[ \argmin_{u \in \d{R}} \left\{W_{x}(u) + \tfrac{1}{2}\theta_0'(x) \Phi_0'(x)  u^2 \right\} \ \stackrel{d}{=}\ \left\{\frac{2[\kappa_0(x)]^{1/2}}{\theta_0'(x) \Phi_0'(x)}\right\}^{2/3} \argmin_{u \in \d{R}} \left\{W_0(u) + u^2 \right\}.\qedhere\]
%
\end{proof}


\begin{proof}[\bfseries Proof of Theorem \ref{dist_asy_lin}]
We use Theorems 2.11.22 and 2.11.23 of VW to show weak convergence of $W_{n,x}$ to $[\kappa_0(x) ]^{1/2}W_0$. In their notation, $f_{n,u} = n^{1/6} g_{x, un^{-1/3}}$ and $\s{F}_{n,M} = \{ f_{n,u} : |u| \leq M\} = n^{1/6}\s{G}_{x,Mn^{-1/3}}$ with envelope $F_{n,M} = n^{1/6}G_{Mn^{-1/3}}$. Thus, we have that $P_0 F_{n,M}^2 = n^{1/3} P_0 G_{x,Mn^{-1/3}}^2 = n^{1/3} \boundeddet(Mn^{-1/3}) = \boundeddet(1)$ for each $M>0$ by (B2). For any $\epsilon > 0$ and $\eta > 0$, $R^{-1} P_0 G_{x,R}^2 \{ G_{x,R} > \eta (MR)^{-1}\} < M\epsilon$ for all $R$ small enough, so that after some rearrangement, for all $n$ large enough,
\[ P_0 F_{n,M}^2 \{ F_{n,M} > \eta n^{1/2}\}  < \epsilon\ . \]
In the case of Theorem 2.11.23, we will use the first possibility of (B1a) to establish the convergence of the bracketing entropy integral:
\begin{align*}
&\int_{0}^{\delta_n} \left[\log N_{[]}(\varepsilon \|F_{n,M}\|_{P_0,2}, \s{F}_{n,M}, L_2(P_0))\right]^{1/2} d\varepsilon\\
&\hspace{0in}= \int_{0}^{\delta_n} \left[\log N_{[]}(\varepsilon n^{1/6} \|G_{x,Mn^{-1/3}}\|_{P_0,2}, n^{1/6} \s{G}_{x,Mn^{-1/3}}, L_2(P_0))\right]^{1/2} d\varepsilon \\
&\hspace{0in}=  \int_{0}^{\delta_n} \left[\log N_{[]}(\varepsilon \|G_{x,Mn^{-1/3}}\|_{P_0,2}, \s{G}_{x,Mn^{-1/3}}, L_2(P_0))\right]^{1/2} d\varepsilon\ =\ \boundeddet\left(\int_{0}^{\delta_n} \varepsilon^V d\varepsilon\right)\ =\ \boundeddet\left(\frac{\delta_n^{V+1}}{V+1} \right)\ \rightarrow\ 0
\end{align*}
for all $\delta_n \to 0$. The calculation for the uniform entropy integral using the second possibility (B1b) to establish Theorem 2.11.22 is identical.

We now show that (B3) implies that, for all $\delta$ small enough, $\sup_{|u - v| < \delta} P_0(g_{x,u}- g_{x,v})^2 =\boundeddet(\delta)$ and that $\alpha^{-1} [P_0 (g_{x, \alpha u} g_{x, \alpha v}) - P_0 g_{x, \alpha u}P_0 g_{x, \alpha v}] \to \sigma^2(u,v)\kappa_0(x)$
 as $\alpha \to 0$, where $\sigma^2(u,v) :=  (u \wedge v) - u I_{(-\infty,0)}(u) - v I_{(-\infty,0)}(v)$ is the covariance of a two-sided Brownian motion. Then we will have that
\[ \sup_{|s - t| < \delta_n} P_0(f_{n,s} - f_{n,t})^2 = n^{1/3} \sup_{|u - v| < \delta_n n^{-1/3}} P_0(g_{x,u}- g_{x,v})^2 = \boundeddet\left( n^{1/3} \delta_nn^{-1/3}\right) = \boundeddet\left(\delta_n\right) \to 0\]
for all $\delta_n \to 0$ and that $P_0 f_{n,u} f_{n,v} - P_0 f_{n,u} P_0 f_{n,v} = n^{1/3} P_0 g_{x, un^{-1/3}} g_{x, vn^{-1/3}} - n^{1/3}  P_0 g_{x, un^{-1/3}}P_0 g_{x, vn^{-1/3}}$ tends to $\sigma^2(u,v) \kappa_0(x)$; both of these statements are conditions of  Theorems 2.11.22 and 2.11.23 of VW. 

Writing $s := x + u$ and $t := x + v$, we can show that $P_0 (g_{x,u} - g_{x,v})^2=  \Sigma_0( s, s)   -2 \Sigma_0( s, t)  + \Sigma_0( t, t)$.
Hence, for the first claim, it is sufficient to show that $|\Sigma_0( s, s)   - \Sigma_0( s, t)| =\boundeddet( |s - t|)$ for all $s, t$ in a neighborhood of $x$. By assumption, $\Sigma_0^*$ is continuously differentiable at $(x,x)$, which implies that  $|\Sigma_0^*(s, s)   - \Sigma_0^*( s,t)|=\boundeddet( |s -t|)$ for $s, t$ in a neighborhood of $x$. We can decompose $ \iint_{-\infty}^{s \wedge t} A_0(s, t,u, w) H_0(du, w) Q_0(dw)$ as $\bar{\Sigma}_0(s,t)+\tilde{\Sigma}_0(s,t)$, where we set
\[\bar{\Sigma}_0(s,t):=\iint_{-\infty}^{x} A_0(s, t,u, w) H_0(du, w)Q_0(dw),\ \tilde{\Sigma}_0(s,t):=\iint_{x}^{s \wedge t} A_0(s, t, u, w) H_0(du, w) Q_0(dw)\ .\] By (B3b), $\bar{\Sigma}_0$ is continuously differentiable at $(x,x)$, which implies that  $|\bar{\Sigma}_0(s, s)   - \bar{\Sigma}_0( s,t)|=\boundeddet( |s -t|)$ for $s, t$ in a neighborhood of $x$. For $\tilde\Sigma_0$, we have that $|\tilde\Sigma_0(s,t) - \tilde\Sigma_0(s,s)|$ is bounded above by 
\[\iint_{x}^s \left|A_0(s, s, u, w) - A_0(s, t, u, w)\right| H_0(du, w)Q_0(dw)  + \iint_{s}^{s\wedge t}  \left| A_0(s, t, u, w) \right| H_0(du, w)Q_0(dw)\ .\]
Continuous differentiability of $A_0$ around $(x,x)$ implies that the first summand is bounded above by
\[ |s - t| \iint_{x}^s \sup_{s,t \in B_{\delta}(x)}|A_0'(s, t, u, w)| H_0(du, w) Q_0(dw)\] for $s,t$ close enough to $x$,
which is bounded up to a constant by $|s - t|$ by assumption. Boundedness of $A_0$ and continuity of $H_0$  around $x$ for all $w$ yields the same for the second term.

For the second claim, we first note that the contribution of $\bar{\Sigma}_0$ to $\tfrac{1}{\alpha} [P_0 (g_{x, \alpha u} g_{x, \alpha v}) - P_0 g_{x, \alpha u}P_0 g_{x, \alpha v}]=\tfrac{1}{\alpha} \left[\Sigma_0(x+ \alpha u , x+ \alpha v) - \Sigma_0(x + \alpha u, x) - \Sigma_0(x, x +\alpha v) + \Sigma_0(x,x) \right]$ is 
\begin{align*}
&\tfrac{1}{\alpha}\left[\bar{\Sigma}_0(x + \alpha u , x + \alpha v) - \bar{\Sigma}_0(x,x)\right] -\tfrac{1}{\alpha}\left[ \bar{\Sigma}_0(x + \alpha u , x) - \bar{\Sigma}_0(x,x)\right] - \tfrac{1}{\alpha}\left[ \bar{\Sigma}_0(x, x + \alpha v) - \bar{\Sigma}_0(x,x)\right],
\end{align*}
which, due to the differentiability of $\bar{\Sigma}_0$, tends to $(u + v) \bar{\Sigma}_0'(x, x)-u \bar{\Sigma}_0'(x, x) - v\bar{\Sigma}_0'(x, x)=0$ as $\alpha \to 0$. Similarly, $\Sigma_0^*$ does not contribute to the limit. The contribution of $\tilde{\Sigma}_0$ therefore determines the limit entirely.  For any fixed $r$ and $w$, we note that
\begin{align*}
\frac{1}{\alpha} \int_x^{x + \alpha r} A_0(x, x, u, w) H_0(du,w)  \longrightarrow r A_0(x, x, x, w) H'_0(x,w) 
\end{align*}
as $\alpha\rightarrow 0$ by the continuous differentiability of $u \mapsto H_0(u, w)$ at $u=x$ and the continuity of $u \mapsto A_0(x,x,u,w)$. Since the continuity of $x\mapsto A_0(x,x,x,w) H'_0(x,w)$ is uniform in $w$ and these functions are $Q_0$-integrable, by the Dominated Convergence Theorem,  for any fixed $r$, we have that
\[ \frac{1}{\alpha} \iint_x^{x + \alpha r} A_0(x, x, u, w) H_0(du,w) Q_0(dw)  \longrightarrow r \int  A_0(x, x, x, w) H_0'(x,w)  Q_0(dw)\ .\] We then find that $\tfrac{1}{\alpha}[\tilde\Sigma_0(x+ \alpha u , x+ \alpha v) - \tilde\Sigma_0(x + \alpha u, x) - \tilde\Sigma_0(x, x +\alpha v) + \tilde\Sigma_0(x,x)]$ can be written, up to a remainder term tending to zero as $\alpha\to 0$, as
\begin{align*}
\frac{1}{\alpha}\iint \left[I_{(x,x+\alpha(u\wedge v))}(y)-I_{(-\infty,0)}(u)I_{(x,x+\alpha u)}(y)-I_{(-\infty,0)}(v)I_{(x,x+\alpha v)}(y)\right]A_0(x,x,y,w)H_0(dy,w)Q_0(dw)
\end{align*} limiting to $\left[  u \wedge v  - I_{(-\infty,0)}(u)u - I_{(-\infty,0)}(v) v\right] \int  A_0(x, x, x, w) H'_0(x,w) Q_0(dw)$,
the claimed covariance. The remainder term we left out can be expressed as \begin{align*}
&\frac{1}{\alpha} \iint_{x}^{x + \alpha (u \wedge v)}\left[ A_0(x+\alpha u, x + \alpha v, y, w)  - A_0(x, x, y, w) \right]H_0(dy,w) Q_0(dw) \\
&\hspace{0.5in}- I_{(-\infty,0)}(u)\frac{1}{\alpha} \iint_{x}^{x + \alpha u}\left[ A_0(x+\alpha u, x , y, w) - A_0(x, x, y, w) \right]H_0(dy,w) Q_0(dw)\\
&\hspace{0.5in}-I_{(-\infty,0)}(v)\frac{1}{\alpha}\iint_{x}^{x + \alpha v}\left[ A_0(x, x + \alpha v, y, w) -  A_0(x, x, y, w) \right] H_0(dy,w)Q_0(dw)\ .
\end{align*}
For $\alpha$ small enough the absolute value of each inner difference is bounded by $\alpha (|u |\vee |v| )| A_0'(x,x,y, w)|$. Since $y\mapsto A_0'(x,x,y, w)$ is continuous and $y\mapsto H_0(y,w)$ is differentiable in a neighborhood of $x$ uniformly in $w$, for $\alpha$ small enough,  the absolute value of the remainder is bounded up to a constant by
\begin{align*}
\iint \left[I_{(x,x+\alpha(u\wedge v))}(y)+I_{(-\infty,0)}(u)I_{(x,x+\alpha u)}(y)+I_{(-\infty,0)}(v)I_{(x,x+\alpha v)}(y)\right]H_0(dy,w)Q_0(dw)\ .
\end{align*} 
Since $y\mapsto H'_0(y,w)$ is bounded near $x$ uniformly in $w$, this bound tends to zero as $\alpha \to 0$. This, in addition to condition (B4), proves (A1). Since $\theta_0'(x)$ and $\Phi_0'(x)$ are assumed positive, (A2) is also satisfied. For (A3), we note that 
\[ E_0\left[\sup_{|u| \leq \delta n^{1/3} } |\d{G}_n f_{n,u}|\right] = n^{1/6} E_0\left[\sup_{|u| \leq  \delta n^{1/3}} |\d{G}_n  g_{x, un^{-1/3}}|\right] = \boundeddet\left(\delta^{1/2}n^{1/6}\right)\]
 for all $n$ large enough is also implied  by assumption (B1) and Theorems 2.14.1 and 2.14.2 of VW. The remainder term satisfies (A3) by condition (B5).\qedhere

%

\end{proof}


\begin{proof}[\bfseries Regularity conditions and proof of Theorem \ref{thm:transform}]
Regularity conditions for Theorem \ref{thm:transform} include that $(s,t, u, z) \mapsto M_{s,0}^{(1)}(u,z) M_{t,0}^{(1)}(u,z)$ and $(s,t, u, z) \mapsto L_{s,0}^{(1)}(u, z) L_{t,0}^{(1)}(u, z)$  satisfy (B3b) and (B3c), and that the following maps are continuously differentiable in $(s, t)$ in a neighborhood of $(x,x)$:
\begin{align*}
&(s,t) \mapsto E_0\left[I_{[0,s]}(U) M_{s,0}^{(1)}(O)M_{t, 0}^{(2)}(O)\right],\ (s,t) \mapsto E_0\left[ I_{[0,s]}(U) M_{s,0}^{(1)}(O)D_{t, 0}^{(2)}(O)\Phi_0'(U)\right],\\
&(s,t) \mapsto E_0\left[ I_{[0,s]}(U) M_{s,0}^{(1)}(O) L_{t,0}^{(2)}(O) \Phi_0'(U)\right],\ (s,t) \mapsto E_0\left[ I_{[0,s]}(U)  L_{s,0}^{(1)}(O)D_{t, 0}^{(2)}(O) \right],\\
&(s,t) \mapsto E_0\left[ I_{[0,s]}(U)  L_{s,0}^{(1)}(O)L_{t,0}^{(2)}(O) \right],\ (s,t) \mapsto E_0\left[ M_{s, 0}^{(2)}(O)M_{t, 0}^{(2)}(O) \right],\ (s,t) \mapsto E_0\left[ D_{s, 0}^{(2)}(O)D_{t, 0}^{(2)}(O) \right],\\
&(s,t) \mapsto E_0\left[ D_{s, 0}^{(2)}(O)L_{t, 0}^{(2)}(O)\right],\ (s,t) \mapsto E_0\left[ L_{s, 0}^{(2)}(O)L_{t, 0}^{(2)}(O)\right].
\end{align*}

We first examine the covariance arising from the use of $\Theta_n$ and the identity transformation. Writing $H_0:(u, z)\mapsto P_0(U \leq u \mid Z=z)$, we have that $\Sigma_0(s,t) = P_0(M_{s,0}^*M_{t,0}^*)$ is equal to
\begin{align*}
& \int \left[I_{[0,s]}(u) M_{s,0}^{(1)}(u,z) + M_{s, 0}^{(2)}(u,z) \right] \left[I_{[0,t]}(u) M_{t,0}^{(1)}(u,z) + M_{t, 0}^{(2)}(u,z) \right]P_0(du, dz)\\
&= \iint_0^{s \wedge t} M_{s,0}^{(1)}(u,z) M_{t,0}^{(1)}(u,z) H_0(du, z)Q_0(dz)\\
&\hspace{.15in}+ \int \left[I_{[0,s]}(u) M_{s,0}^{(1)}(u,z)M_{t, 0}^{(2)}(u,z) + I_{[0,t]}(u) M_{t,0}^{(1)}(u,z)M_{s, 0}^{(2)}(u,z) + M_{t, 0}^{(2)}(u,z)M_{s, 0}^{(2)}(u,z) \right]P_0(du, dz)\ .
\end{align*}
By assumption, the second summand plays the role of $\Sigma_0^*(s,t)$ and satisfies (B3a). The first summand satisfies (B3b) and (B3c) with $A_0(s,t, u, z) = M_{s,0}^{(1)}(u,z) M_{t,0}^{(1)}(u,z)$ by assumption, and $H_0(u, z)$ satisfies (B3d) with $H_0'(u, z) = h_0(u | z)$ equal to the conditional density of $U$ given $Z=z$. Therefore, the scale factor for the Chernoff distribution in Theorem 4 is equal to $[ 4\theta'_0(x) \kappa_0(x)]^{1/3}$, where \[\kappa_0(x) = \int \left[M_{x,0}^{(1)}(x,z)\right]^2 h_0(x \mid z) Q_{Z,0}(dz).\]

We then examine the covariance arising from the use of $\Gamma_n$ and transformation $\Phi_n$. Using integration by parts, we find that $D_{s,0}^*(o) - \theta_0(x) L_{s,0}^*(o)$ is equal to $I_{[0,s]}(u)\Upsilon_{1,s,x}(u,z)+\Upsilon_{2,s,x}(u,z)$, where \begin{align*}
\Upsilon_{1,s,x}:\ &(u,z)\mapsto M_{s,0}^{(1)}(u, z)\Phi_0'(u) -\int_{u}^s  L_{v,0}^{(1)}(u, z) \theta_0(dv) + \left[\theta_0(s) -\theta_0(x)\right]L_{s,0}^{(1)}(u,z)\ ,\\
\Upsilon_{2,s,x}:\ &(u,z)\mapsto D_{s, 0}^{(2)}(u, z) -\int_0^s  L_{v,0}^{(2)}(u, z) \theta_0(dv) + \left[\theta_0(s) -\theta_0(x)\right]L_{s,0}^{(2)}(u, z)\ .
\end{align*}  The covariance $\Sigma_0(s,t)=P_0 [D_{s,0}^*- \theta_0(x) L_{s,0}^* ] [D_{t,0}^*- \theta_0(x) L_{t,0}^* ] $ can then be written as the sum $\Sigma_{0,1}(s,t)+\Sigma_{0,2}(s,t)+\Sigma_{0,3}(s,t)+\Sigma_{0,4}(s,t)$ of all cross-product terms. The sum $\Sigma_{0,2}+\Sigma_{0,3}+\Sigma_{0,4}$ constitutes $\Sigma_0^*$, where the summands are defined pointwise as $\Sigma_{0,2}(s,t)=\iint I_{[0,s]}(u)\Upsilon_{1,s,x}(u,z)\Upsilon_{2,t,x}(u,z)P_0(du,dz)$, $\Sigma_{0,3}(s,t)=\Sigma_{0,2}(t,s)$ and $\Sigma_{0,4}(s,t)= \iint \Upsilon_{2,s,x}(u,z)\Upsilon_{2,t,x}(u,z)P_0(du,dz)$.
By assumption, each of these expressions is continuously differentiable in $(s,t)$ in a neighborhood of $(x,x)$. Finally, we have $\Sigma_{0,1}(s,t)=\iint I_{[0,s\wedge t]}(u) \Upsilon_{1,s,x}(u,z)\Upsilon_{1,t,x}(u,z)H_0(du,z)Q_{Z,0}(dz)$. The product $\Upsilon_{1,s,x}(u,z)\Upsilon_{1,t,x}(u,z)$ forms $A_0(s,t,u,z)$, which satisfies (B3b) and (B3c) by assumption. Hence, in this case, the scale parameter is $[4 \theta_0'(x) \kappa_0^*(x)/\Phi'_0(x)^2]^{1/3}$ in view of Theorem \ref{dist_asy_lin}, where
\[\kappa_0^*(x)= \int \left[M_{x,0}^{(1)}(x,z) \Phi_0'(x)\right]^2h_0(x \mid z) Q_{z,0}(dz)=\Phi_0'(x)^2\kappa_0(x)\ .\] Thus, the scale factor obtained coincides with that obtained with $\Theta_n$ and identity transformation.
\end{proof}

%

%

\begin{proof}[\bfseries Proof of Theorem~\ref{local_empirical}]
Let $\s{F}_{x,n,\delta} := \{ n^{1/6} g_{x,un^{-1/3}}(\pi)  : |u| \leq \delta, \pi \in \s{P}\} = n^{1/6} \s{G}_{x,\s{P},\delta n^{-1/3}}$, which has envelope $F_{x,n,\delta} = n^{1/6} G_{x,\s{P},\delta n^{-1/3}}$.  We first show that the process $\{ \d{G}_n n^{1/6} g_{x,u/n^{1/3}}(\pi) : |u| \leq \delta, \pi \in \s{P}\}$ is asymptotically $\bar\rho$-equicontinuous using Theorems 2.11.1 and 2.11.9 of VW, where $\bar\rho$ is the product semimetric.
We begin by assessing display (2.11.21) of VW. For the first line, we note that $P_0 F_{x,n,\delta}^2 = n^{1/3} P_0 G_{x,\s{P},\delta n^{-1/3}}^2 \leq c\delta$ for all $n$ large enough, so $P_0 F_{x,n,\delta}^2 = \boundeddet(1)$ as $n \to \infty$ for all fixed $\delta$. For the second line, we have, for any $\eta ,\epsilon > 0$, 
\begin{align*}
P_0 F_{x,n,\delta}^2 \{ F_{x,n,\delta} > \eta n^{1/2}\} &= n^{1/3} P_0 G_{x,\s{P},\delta n^{-1/3}}^2 \{ G_{x,\s{P},\delta n^{-1/3}} > \eta n^{1/3}\}\\
& = \delta (\delta n^{-1/3})^{-1}  P_0 G_{x,\s{P},\delta n^{-1/3}}^2 \{ G_{x,\s{P},\delta n^{-1/3}} > (\delta\eta) (\delta n^{-1/3})^{-1}\}\ ,
\end{align*}
which gives $P_0 F_{x,n,\delta}^2 \{ F_{x,n,\delta} > \eta n^{1/2}\} \leq \delta \epsilon'$ with $\epsilon' := \delta \eta$ for $n$ large enough. Next, we must show that 
\[ \sup\left\{ n^{1/3} P_0\left[g_{x,u n^{-1/3}}(\pi_1) - g_{x,vn^{-1/3}}(\pi_2)\right]^2 : |u - v| < \delta_n, \rho(\pi_1, \pi_2) < \delta_n\right\} \longrightarrow 0 \]
as $n \to \infty$ for all $\delta_n \downarrow 0$. We can bound the square root of $P_0\left[g_{x,un^{-1/3}}(\pi_1) - g_{x,vn^{-1/3}}(\pi_2)\right]^2$ by 
\[
\left\{ P_0\left[g_{x,un^{-1/3}}(\pi_1) - g_{x,vn^{-1/3}}(\pi_1)\right]^2 \right\}^{1/2}+\left\{ P_0\left[g_{x,vn^{-1/3}}(\pi_1) - g_{x,vn^{-1/3}}(\pi_2)\right]^2 \right\}^{1/2}.
\]
By assumption, for all $n$ large enough and up to a multiplicative constant, the first summand is bounded up by $(|u-v|n^{-1/3})^{1/2}$, and the second summand, by $\rho(\pi_1, \pi_2) (|v| n^{-1/3})^{1/2}$. Thus, we find that
\[n^{1/3} P_0\left[g_{x,un^{-1/3}}(\pi_1) - g_{x,vn^{-1/3}}(\pi_2)\right]^2 =\boundeddet\left(\left[|u-v| + |v| \rho(\pi_1, \pi_2)\right]^2\right),\]
uniformly over $u$, $v$, $\pi_1$ and $\pi_2$, which satisfies the requirement. Under (C1a), for any $\delta > 0$ and $n$ large enough, we have that
\begin{align*}
&\int_0^{t}  \left[\sup_Q \log N(\varepsilon \|F_{x,n,\delta}\|_{P_0,2}, \s{F}_{x,n,\delta}, L_2(Q))\right]^{1/2}d\varepsilon\\
&= \int_0^{t}  \left[\sup_Q \log N(\varepsilon n^{1/6} \|G_{x,\s{P}, \delta/n^{1/3}}\|_{P_0,2}, n^{1/6} \s{G}_{x,\s{P},\delta/n^{1/3}}, L_2(Q))\right]^{1/2}d\varepsilon \\
&= \int_0^{t} \sup_Q \left[ \log N(\varepsilon  \|G_{x,\s{P},\delta n^{-1/3}}\|_{P_0,2}, \s{G}_{x,\s{P},\delta n^{-1/3}}, L_2(Q))\right]^{1/2} d\varepsilon \ =\ \boundeddet\left(\int_{0}^{t} \varepsilon^V d\varepsilon\right)\ =\ \frac{t^{V+1}}{V+1}\ \rightarrow\ 0
\end{align*} as $t\rightarrow0$ since $V > -1$. An identical analysis holds under (C1b). We have thus verified the conditions of Theorems 2.11.1 or 2.11.9 of VW, and hence, $\{ \d{G}_n n^{1/6} g_{x,un^{-1/3}}(\pi) : |u| \leq \delta, \pi \in \s{P}\}$ is asymptotically $\bar\rho$-equicontinuous. Using (C4) and Lemma 4 (stated and proved in the Supplementary Material), we obtain the first statement of the theorem. For the second statement, we use  Theorem 2.14.1 and 2.14.2 of VW to obtain that
\[ E_0\left\{\sup_{|u| \leq \delta ,\pi^* \in \s{P}} \left|\d{G}_n n^{1/6}\left[g_{x,un^{-1/3}}(\pi^*) - g_{x,un^{-1/3}}(\pi)\right] \right|\right\} =\boundeddet\left(  \|F_{x,n,\delta} \|_{P_0,2}\right) =\boundeddet\left(\delta^{1/2}\right).\qedhere\]
\end{proof}

\begin{proof}[\bfseries Proof of Theorem~\ref{thm:hazard}]
We need to verify conditions (A1)--(A5) for the pair $(\Gamma_n,\Phi_n)$. Let $W_{n,x}^*$ be local process for this pair, which we can write pointwise as
\begin{align*}
W_{n,x}^*(u)\ &=\ r_n^2 \left\{ \left[S_n(x + u r^{-1}_n) - S_0(x + ur^{-1}_n)\right] - \left[S_n(x) - S_0(x)\right]\right\} \\
&\hspace{0.4in}-\theta_0(x) r_n^2 \left\{\left[\Phi_n(x + ur_n^{-1}) - \Phi_0(x + ur^{-1}_n)\right] - \left[\Phi_n(x) - \Phi_0(x)\right]\right\}  \\
&= \ W_{n,x}(u)-  \theta_0(x) r_n^2 \int_x^{x + ur_n^{-1}} \left[S_n(v) - S_0(v)\right]dv\ ,
\end{align*}
where $W_{n,x}$ is the local process for the pair $(\Gamma_n, \n{Id})$. We can rewrite the second term as
\begin{align*}
 \theta_0(x) r_n^{-1}\int_0^{u} W_{n,x}(v)dv - ur_n \theta_0(x)\left[S_n(x) - S_0(x)\right].
\end{align*}
Because for each $M > 0$ we have that $\{W_{n,x}(u) : |u| \leq M\}$ converges weakly in $\ell^{\infty}[-M, M]$ by (A1), so does $\left\{ \int_0^{u} W_{n,x}(v) dv : |u| \leq M\right\}$ by the continuous mapping theorem. The latter process is thus uniformly asymptotically negligible when multiplied by $r_n^{-1}$. The second term is also negligible since $S_n(x) - S_0(x) =\fasterthan(r_n^{-1})$. It follows then that $W_{n,x}^*$ and $W_{n,x}$ converge weakly to the same limit in $\ell^{\infty}[-M, M]$ and so, conditions (A1) and (A2) are automatically satisfied for $W_{n,x}^*$. The above expansion gives that $\sup_{|u| \leq \delta r_n } | W_{n,x}^*(u)|$ has mean bounded above by
\[
 E_0\left[\sup_{|u| \leq \delta r_n} | W_{n,x}(u)|\right] +\theta_0(x) r_n^{-1} E_0\left[\sup_{|u| \leq \delta r_n} \left| \int_0^u W_{n,x}(v) dv \right|\right] 
 + \delta r_n \theta_0(x)   E_0\left[r_n|S_n(x) - S_0(x)|\right],
 \] itself bounded by $f_n(r_n\delta) +\theta_0(x) \delta f_n(r_n\delta) + \theta_0(x) r_n\delta$
 since $\left| \int_0^u W_{n,x}(v) \, dv \right| \leq |u| \sup_{|v| \leq |u|} |W_{n,x}(v)|$. This expression satisfies (A3) since $\delta f_n(r_n\delta) \leq f_n(r_n\delta)$ for each $\delta \leq 1$. Condition (A4) is satisfied since \[E_0\left\{\sup_{|v|\leq \delta} \left| \int_0^{x+ v} \left[S_n(u) - S_0(u)\right] du \right|\right\} = \boundeddet\left(E_0\left[\sup_{u \leq x + \delta} |S_n(u) -S_0(u)|\right]\right) .\]This is similarly true for (A5).\qedhere

\end{proof}

\clearpage
\section*{Supplementary Material}

\newtheorem{innercustomlemma}{Lemma}
\newenvironment{customlemma}[1]
  {\renewcommand\theinnercustomlemma{#1}\innercustomlemma}
  {\endinnercustomlemma}

\vspace{.1in}

\subsection*{Proof of Lemmas}

\begin{proof}[\bfseries Proof of Lemma 1]
Since $\theta(x) = \psi(\Phi(x))$ and $\psi = -\partial_-\n{LCM}_{[a,b]}(-\Gamma\circ\Phi^-)$, where LCM is the least concave majorant operator, and $-\Gamma\circ\Phi^-$ is by assumption upper semi-continuous, the standard switch relation (e.g., Lemma 4.1 of \citealp{van2006estimating}, Lemma 3.2 of \citealp{groene2014shape}) implies that $\theta(x) > c$ if and only if $\sup \argmax_{u \in [a,b]} \left\{cu -\Gamma(\Phi^{-}(u)) \right\} <\Phi(x)$. We note that the set of maximizers is closed because $cu -\Gamma(\Phi^{-}(u))$ is upper semi-continuous.

If $c \neq 0$, the argmax can only contain elements in the range of $\Phi$, since on intervals where $\Phi^-$ is constant, the function can be made larger by taking $u$ to one end of the interval -- which end of the interval depends on the sign of $c$. We have used here the fact that $a$ and $b$ are by assumption in the range of $\Phi$. If $c = 0$, taking $\sup$ of the argmax ensures that the result will be at the right end of an interval. This shows that
\[  \sup \argmax_{u \in [a,b]} \left\{cu -\Gamma(\Phi^{-}(u)) \right\} < \Phi(x)\ \ \mathrm{iff}\ \ \sup \argmax_{u \in J^*} \left\{cu -\Gamma(\Phi^{-}(u)) \right\}< \Phi(x)\ , \]
where $J^* := [a,b] \cap \text{range}(\Phi)$. Let $\hat{u} =  \sup \argmax_{u \in J^*} \left\{cu -\Gamma(\Phi^{-}(u)) \right\}$. Because $\Phi^-$ is strictly increasing  on $\text{range}(\Phi)$ and hence $\Phi^- = \Phi^{-1}$  on $\text{range}(\Phi)$, and furthermore, $u \in \text{range}(\Phi)$ if and only if $\Phi(\Phi^-(u)) = u$, for every $u \in J^*$ there is a unique $v \in I^*$ such that $v = \Phi^-(u)$ and $\Phi(v) = u$. Let $\hat{v}\in I^*$ be such an element corresponding to $\hat{u}$. Then, we have that $\hat{u} < \Phi(x)$ if and only if $\Phi(\hat{v}) < \Phi(x)$, $c \Phi(\hat{v}) - \Gamma(\hat{v}) \geq c\Phi(v) - \Gamma(v)$ for all $v \in I^*$, and for any $v \in I^*$ such that equality holds, $v < \hat{v}$. Finally, $\Phi(\hat{v}) < \Phi(x)$ if and only if $\hat{v} < \Phi^-(\Phi(x))$ since $\hat{v} \in I^*$ and $\Phi$ is right-continuous and non-decreasing. It follows that $\theta(x)>c$ if and only if 
\[\sup \argmax_{v \in I^*} \left\{c\Phi(v) -\Gamma(v) \right\} < \Phi^-(\Phi(x))\ .\qedhere\]
\end{proof}

\vspace{.1in}
\begin{proof}[\bfseries Proof of Lemma 2]
First, note that $\omega^-(\eta) > 0$ for any $\eta > 0$ by right-continuity of $\omega$ at $\eta=0$. Thus, $\omega(v) < \eta$ for all $v < \omega^-(\eta)$, so that $\omega(\gamma(u)) < \eta$ for all $u < \gamma^{-1}(\omega^-(\eta))$. It is straightforward to see that $c(\delta / 2, \eta/2) /2 < \gamma^{-1}(\omega^-(\eta))$, which implies that $r(\delta , \eta) > 0$ for all $\delta , \eta >0$.

Let $\rho_\eta(d) := \int_0^{d} \left[ \eta -\omega(\gamma(u)) \right]du$. Since $\Phi_0$ is continuous and strictly increasing at $x$, we have that $\Phi_0^-(\Phi_0(x)) = x$. Recall that $\psi_0:= \theta_0 \circ \Phi_0^-$. Setting $\tilde\omega = \omega \circ \gamma$, the moduli of continuity of $\theta_0$ and of $\Phi_0^-$ at $x$ imply that $|\psi_0(u) - \psi_0(t)| \leq \tilde\omega(|u - t|)$ for all $u$ and $t = \Phi_0(x)$. Note that $\tilde\omega^- = \gamma^{-1} \circ \omega^-$.

First, suppose $x \in I_n$ so that $\Phi_n^- (\Phi_n(x)) = x$. Define the functions 
$R_{n,\eta,x}(u):=\Gamma_n(u) - \Gamma_0(u) - [\eta + \theta_0(x)][\Phi_n(u) - \Phi_0(u)]$ and
\[
h_{\eta, t}(u):=[\eta+\psi_0(t)]\Phi_0(u)- \Gamma_0(u)=\int_0^{\Phi_0(u)} [\eta + \psi_0(t) - \psi_0(v)]dv\ .
\]
By Lemma 1, we have that $\theta_n(x) - \theta_0(x) > \eta$ holds if and only if
\begin{gather*}
\sup \argmax_{u \in I_n} \left\{[\eta + \theta_0(x)]\Phi_n(u) -\Gamma_n(u) \right\} < x\ \ \textrm{iff}\ \  \sup \argmax_{u \in  I_n} \left\{h_{\eta, t}(u)-R_{n,\eta,x}(u) \right\} <  x \\
\textrm{iff}\ \ \sup_{u \in I_n: u \leq  x - \epsilon}\left\{ h_{\eta, t}(u) -R_{n,\eta,x}(u)  \right\}> \sup_{u \in I_n : x -\epsilon \leq u} \left\{h_{\eta, t}(u)-R_{n,\eta,x}(u) \right\}
\end{gather*}
for some $\epsilon > 0$.  Note that 
$\sup_{u \in I_n: u \leq x - \epsilon}\left\{ h_{\eta, t}(u) -R_{n,\eta,x}(u)  \right\} \leq h_{\eta, t}(x) + \sup_{u \in I_n, u < x}\left\{ -R_{n,\eta,x}(u)  \right\}$ since $h_{\eta, t}(u)$ is non-decreasing for $u \leq x$. Let $v_{n,\eta,t}^+ := \sup\{ v \in I_n : v \geq x, \tilde\omega(\Phi_0(v) - t) \leq \eta\}$. Then, we can write that $\sup_{u \in I_n : x -\epsilon \leq u } \left\{h_{\eta, t}(u)-R_{n,\eta,x}(u) \right\}\geq h_{\eta, t}(v_{n,\eta,t}^+ ) + \inf_{x \leq u}\{ -R_{n,\eta,x}(u)\}$.
 Hence, we have that $\theta_n(x) - \theta_0(x) > \eta$ implies that 
\begin{align*}
&h_{\eta, t}(x) + \sup_{u \leq x }\left\{ -R_{n,\eta,x}(u)  \right\}> h_{\eta, t}(v_{n,\eta,t}^+) + \inf_{x \leq u}\{ -R_{n,\eta,x}(u)\}\ \ \textrm{iff}\ \ X_{n,\eta}>  \int_{t}^{\Phi_0(v_{n,\eta,t}^+ )}[\eta+\psi_0(t) - \psi_0(u)] du
\end{align*} with the latter statement implying that $X_{n,\eta}> \int_{0}^{\Phi_0(v_{n, \eta, t}^+) - t}[\eta-\tilde\omega(u)]du=\rho_\eta(\Phi_0(v_{n, \eta, t}^+) - t)$, 
where we set $X_{n,\eta} := \sup_{u \in I_n, u < x} -R_{n,\eta,x}(u) + \sup_{u \in I_n, x \leq u}R_{n,\eta,x}(u)$.  An analogous argument for the opposite tail with $v_{n,\eta,t}^- := \inf\{ v \in I_n : v \leq x, \tilde\omega(t - \Phi_0(v)) \leq \eta\}$ shows that $\theta_n(x) - \theta_0(x) < -\eta$ implies that \[Y_{n,\eta} \geq \int_{\Phi_0(v_{n,\eta,t}^-)}^t [\eta + \psi_0(u) - \psi_0(t)]du\ ,\]which implies that $Y_{n,\eta} \geq \int_0^{t - \Phi_0(v_{n,\eta,t}^-) } [\eta -\tilde\omega(u)]du=\rho_\eta(t-\Phi_0(v^-_{n,\eta,t}))$, where we have set $Y_{n,\eta} := \sup_{u \in I_n: u \geq x}  -R_{n,-\eta,x}(u) +  \sup_{u \in I_n : u \leq x} R_{n,-\eta,x}(u)$. 

Now, we have that $\max( X_{n,\eta}, Y_{n,\eta})\leq 2 \| \Gamma_n - \Gamma_0 \|_{\infty, I_n} + 2(\eta +| \theta_0(x)|) \|\Phi_n- \Phi_0 \|_{\infty, I_n} =: Z_{n,\eta}$.
Let $d_{n,\eta}(t):=[\Phi_0(v_{n, \eta, t}^+) - t] \wedge [t - \Phi_0(v_{n,\eta,t}^-)]$.  Then, since $\eta \geq \tilde\omega(u)$ for $u \leq d_{n,\eta}(t)$, $d \mapsto \rho_\eta(d)$ is nondecreasing for $d \leq d_{n,\eta}(t)$, and hence, $
 \{ |\theta_n(x) - \theta_0(x) | > \eta \} \subseteq \{Z_{n,\eta} \geq \rho_\eta\left(d_{n,\eta}(t) \right)\}$.
Intuitively, since $\Phi_0$ is strictly increasing and continuous, if $\Phi_n$ is uniformly close to $\Phi_0$, then $\Phi_0(v_{n, \eta, t}^+) - t$ and $t - \Phi_0(v_{n,\eta, t}^-)$ should be close to $\tilde\omega^-(\eta)$ with high probability. Therefore, we use the law of total probability with the event $\{d_{n,\eta}(t) < c(\delta, \eta)/2\}$ to see that
 \begin{align*}
 \{Z_{n,\eta} \geq \rho_\eta\left(d_{n,\eta}(t) \right)\}\subseteq \left\{ Z_{n,\eta} \geq  r(\delta, \eta)  \right\} \cup \left\{ d_{n,\eta}(t) < c(\delta, \eta)/2\right\}.
\end{align*}

Now, $d_{n,\eta}(t) < c(\delta, \eta)/2$ implies that either $\Phi_0(v_{n,\eta, t}^+) - t < c(\delta, \eta)/2$ or $t - \Phi_0(v_{n,\eta, t}^-)<c(\delta, \eta)/2$. Suppose the former. Then, for all $v \in I_n$ such that $\Phi_0(v) \geq t +c(\delta, \eta)/2$, it must be true that $\tilde\omega(\Phi_0(v) - t) > \eta$ and hence $\Phi_0(v) - t \geq \tilde\omega^-(\eta)$. Thus, there is no $v \in I_n$ such that $\Phi_0(v) \in t+ [c(\delta, \eta)/2, \tilde\omega^-(\eta))$, which includes the interval $t+[c(\delta, \eta) / 2, c(\delta,\eta))$.  Note that $\Phi_0^-(t + \gamma^{-1}(\delta)) \leq x+ \delta$, which implies that $\Phi_0(x + \delta) \geq t + \gamma^{-1}(\delta) \geq t+ c(\delta,\eta)$, and thus $\Phi_0$ is strictly increasing and continuous on $[\Phi_0^{-1}(t + c(\delta, \eta) / 2), \Phi_0^{-1} ( t + c(\delta,\eta)) ]$. Hence, there is no $v \in I_n$ also contained in $[\Phi_0^{-1}(t + c(\delta, \eta) / 2), \Phi_0^{-1} ( t +  c(\delta, \eta)) ]$.
Suppose instead  that $t - \Phi_0(v_{n,\eta, t}^-) < c(\delta, \eta)/2$. Then, by similar reasoning, there is no $v \in I_n$ also contained in $[\Phi_0^{-1} ( t -  c(\delta, \eta)) , \Phi_0^{-1}(t -c(\delta, \eta) / 2)]$.
Since $\Phi_0(x-\delta) \geq 0$ and $\Phi_0(x + \delta) \leq u_n$ by assumption, this implies that $\Phi_n$ is constant on at least one of these intervals. Since $\Phi_0$ is strictly increasing and continuous on the intervals, we then have that the supremum distance between $\Phi_n$ and $\Phi_0$ on one of these intervals is at least $c(\delta, \eta)/4$. We have now shown that if $x \in I_n$, then
\[ \left\{ d_{n,\eta}(t) < c(\delta, \eta)/2\right\} \subseteq \left\{\|\Phi_n - \Phi_0\|_{\infty, [x-\delta, x+\delta]} \geq c(\delta, \eta)/4\right\}.\]

Now, if $x \notin I_n$, then since $\psi_n$ is  the left-derivative of $\bar{\Psi}_n$ and $\Phi_n$ is right-continuous, we have $\theta_n(x) = \theta_n( x_n)$ for $x_n := \Phi_n^-(\Phi_n(x))) < x$. Hence, we have that
\begin{align*}
&\{|\theta_n(x) - \theta_0(x) | > \eta\} \subseteq \{ |\theta_n( x_n) - \theta_0( x_n)| > \eta / 2\} \cup \{ \theta_0(x) - \theta_0(x_n) > \eta / 2\} \\
 &\subseteq  \{ |\theta_n( x_n) - \theta_0( x_n)| > \eta / 2, x - x_n < \delta / 2\} \cup \{| \theta_0(x_n) - \theta_0(x)| > \eta / 2, x - x_n < \delta/2\}\cup \{ x - x_n \geq \delta /2\} .
  \end{align*}
Because by assumption $\Phi_n(x) \in (0, u_n)$, and so, $x_n \in I_n$, we can use the above inclusion on the first event with $\delta$ replaced by $\delta / 2$. For the second term, we note that
\[ \{| \theta_0(x_n) - \theta_0(x)| > \eta / 2, x - x_n < \delta/2\} \subseteq \{ \omega^-(\eta/ 2) \leq x - x_n  < \delta/2\}.\]
 Hence, we have that 
\begin{align*}
 \{|\theta_n(x) - \theta_0(x) | > \eta\}& \subseteq  \{ |\theta_n( x_n) - \theta_0( x_n)| > \eta / 2, x -  x_n < \delta / 2 \} \cup \{x -  x_n \geq \omega^-(\eta/ 2)  \wedge \delta / 2\}.
  \end{align*}
The second event implies that $\Phi_n$ is constant on $[x - \omega^-( \eta/2) \wedge \delta / 2, x]$, and since $\Phi_0$ is strictly increasing and continuous there, it implies that
\[\|\Phi_n - \Phi_0\|_{\infty, [x-\delta, x+\delta]} \geq \left(t - \Phi_0(x - \omega^-( \eta/2) \wedge \delta / 2) \right) / 2 \geq \gamma^{-1}(\omega^-( \eta/2) \wedge \delta / 2) / 2 = c(\delta / 2, \eta / 2) / 2\ .\]
Therefore, we find that $\left\{ |\theta_n(x) - \theta_0(x) | > \eta \right\}$ is contained in 
\begin{align*}
&\left\{ Z_{n,\eta/2} \geq r(\delta / 2, \eta / 2)  \right\} \cup \left\{\|\Phi_n - \Phi_0\|_{\infty, [x-\delta, x+\delta]} \geq  c(\delta / 2, \eta / 2)/4\right\}\\
&\hspace{1.52in}\cup \left\{\|\Phi_n - \Phi_0\|_{\infty,  [x-\delta, x+\delta]} \geq c(\delta / 2, \eta / 2) / 2 \right\} \\
&=\ \left\{ Z_{n,\eta/2} \geq  r(\delta / 2, \eta / 2)  \right\} \cup \left\{\|\Phi_n - \Phi_0\|_{\infty,  [x-\delta, x+\delta]} \geq  c(\delta / 2, \eta / 2)/4\right\}.
\end{align*}
The pointwise inequality follows.

The uniform inequality follows from the pointwise inclusions. We note that $\sup_{x \in I_{n, \beta}} |\theta_n(x) - \theta_0(x) |> \eta$ implies there is an $x \in I_{n,\beta}$ such that $|\theta_n(x) - \theta_0(x) | > \eta$.  Thus, we have that $\{ \sup_{x \in I_{n, \beta}} |\theta_n(x) - \theta_0(x) |> \eta\}$ is contained in $\{ \exists x \in  I_{n, \beta}: |\theta_n(x) - \theta_0(x) | > \eta\}$, which can be decomposed as
\[\left\{\exists x \in  I_{n, \beta}: |\theta_n(x) - \theta_0(x) | > \eta, \Phi_n(x) \in (0, u_n)\right\} \cup \left\{\exists x \in  I_{n, \beta}: |\theta_n(x) - \theta_0(x) | > \eta, \Phi_n(x) \notin (0, u_n)\right\}.
\]
Since the moduli of continuity are assumed to hold for all $x$, and by construction, for every $x \in I_{n,\beta}$, $\Phi_0(x -\beta)$ and $\Phi_0(x +\beta)$ are in $J_n$, the pointwise inclusion can be applied to the first event with $\delta = \beta$. For the second event, note that $\Phi_0(x -\beta),\Phi_0(x +\beta)\in J_n$ and $\Phi_n(x) \notin (0, u_n)$ imply that $|\Phi_n(x) - \Phi_0(x)| \geq \gamma^{-1}(\beta)$ and 
\begin{align*}
&\left\{\|\Phi_n - \Phi_0\|_{\infty, I} \geq c(\beta/2, \eta/2)/4)\right\} \cup \left\{ \|\Phi_n - \Phi_0\|_{\infty,  [x-\delta, x+\delta]} > \gamma^{-1}(\beta)\right\} \\
&=\ \left\{\|\Phi_n - \Phi_0\|_{\infty,  [x-\delta, x+\delta]} \geq  c(\beta/2, \eta/2)/4\right\}. \qedhere
\end{align*}\end{proof}

\vspace{.1in}
\begin{customlemma}{3}
Under the conditions of Theorem 3, $\sup \argmax_{v \in c_n(I_n - x)} \{  H_{n, x, \eta}(v) + R_{n}(v)\} = \bounded(1).$
\end{customlemma}
\begin{proof}[\bfseries Proof of Lemma 3]
To establish that $\hat{v}_{n} = \bounded(1)$, we use Theorem 3.2.5 of VW. Write $\displaystyle c_n^{-1} \hat{v}_n(x,\eta) = \sup\argmax_{v \in I_n - x} M_{n,x}(v)$ for 
\[M_{n,x}(v) := -[\Gamma_n(x + v) - \Gamma_n(x)] + \theta_0(x) [\Phi_n(x +v) -\Phi_n(x)]+ \eta c_n^{-\alpha} \Phi_n(x + v)\ .\]
Defining $M_{0,x}(v) := [\Gamma_0(x + v) - \theta_0(x) \Phi_0(x + v)] - [\Gamma_0(x) - \theta_0(x) \Phi_0(x)]$, we have that $M_{0,x}'(v) = \Phi_0'(x+v)[\theta_0(x + v) - \theta_0(x)] > c |v|^{\alpha}$ for $v$ in a neighborhood of $0$ and some $c > 0$, which implies that $-M_{0,x}(v) \leq -c'|v|^{\alpha + 1}$ for $v$ in a neighborhood of $0$ and some $c' > 0$. In the notation of Theorem 3.2.5 of VW, we then have $d(v, 0) := |v|^{\frac{\alpha + 1}{2}}$. The next requirement concerns the modulus of continuity of $M_{n,x}(v) - M_{0,x}(v)$, which we can write as
\begin{align*}
&E_0\left[ \sup_{|v| < \delta^{2/(\alpha + 1)}}\left| (M_{n,x}- M_{0,x})(v) - (M_{n,x} - M_{0,x})(0) \right |\right]\\
&\hspace{.5in}=\ E_0\left[ \sup_{|v| < \delta^{2/(\alpha + 1)}} \left| \eta c_n^{-\alpha}  [\Phi_n(x + v) - \Phi_n(x)]-c_n^{-(\alpha + 1)}W_{n,x}(c_n v) \right |\right] \\
&\hspace{.5in}\leq\  c_n^{-(\alpha + 1)} E_0\left[ \sup_{|u| < c_n\delta^{2/(\alpha + 1)} } \left| W_{n,x}(u)\right|\right] +  |\eta| c_n^{-\alpha} E_0\left[ \sup_{|v| \leq \delta^{2/(\alpha + 1)}}  \left| \Phi_n(x + v) - \Phi_n(x) \right |\right].
\end{align*}
By assumption, the first term is bounded by $c_n^{-(\alpha + 1)} f_n\left(c_n\delta^{2/(\alpha + 1)}\right)$. Taking differences with $\Phi_0$, and since $\Phi_0$ is continuously differentiable at $x$, we can find $\delta$ small enough such that the second term is bounded up to a constant by $c_n^{-\alpha}| \eta| \left(c_n^{-1}+ \delta^{2/(\alpha + 1)}\right)$ for all $n$ large enough. We thus have that, for all $n$ large enough, the above expression is bounded up to a constant by
\[ \tilde{f}_n(\delta) := c_n^{-(\alpha + 1)} f_n\left(c_n\delta^{2/(\alpha + 1)}\right)+ c_n^{-\alpha}| \eta| \left(c_n^{-1}+ \delta^{2/(\alpha + 1)}\right)\ .\]
By assumption, $\delta\mapsto \delta^{-\beta}f_n(c_n\delta)$ is decreasing for some $\beta \in (1,1+\alpha)$, which implies that $\delta \mapsto \delta^{-2\beta / (\alpha + 1)} \tilde{f}_n(\delta)$ is decreasing, where $2\beta / (\alpha + 1) \in (0, 2)$ as required by VW. Additionally, 
\[c_n^{\alpha+1} \tilde{f}_n\left(c_n^{-(\alpha + 1)/2}\right) = f_n(1) +2 |\eta|  = \boundeddet(1)\ . \] If we can establish that $c_n^{-1}\hat{v}_{n}(x,\eta) = \fasterthan(1)$, we will have checked all the conditions of Theorem 3.2.5 of VW, yielding 
\[ | \hat{v}_n(x,\eta)|^{\frac{\alpha + 1}{2}} =  c_n^{(\alpha + 1)/2} d\left(c_n^{-1} \hat{v}_n(x,\eta), 0\right) = \bounded(1)\ ,\] 
and hence $\hat{v}_n(x, \eta) = \bounded(1)$.

 Simplifying further, we have that $c_n^{-1} \hat{v}_n(x,\eta) = -x + \sup\argmax_{v \in I_n} \tilde{M}_{n,x}(v)$, where $\tilde{M}_{n,x}(v):= -\Gamma_n(v) + \Phi_n(v) [ \theta_0(x) + c_n^{-\alpha}\eta] $. Setting 
 \begin{align*}
 h_x(v)\ &:=\ \psi_0(t)\Phi_0(v) - \Gamma_0(v)\ =\ \int_0^{\Phi_0(v)}[\psi_0(t) - \psi_0(u)]du \\
 R_{n,x}(v)\ &:=\  \eta c_n^{-\alpha}  \Phi_n(v) + \theta_0(x) [\Phi_n(v) - \Phi_0(v)] - [\Gamma_n(v) - \Gamma_0(v)]\ ,
 \end{align*} write $\tilde{M}_{n,x}(v) = h_x(v) + R_{n,x}(v)$. Note that $\sup_{v \in I_n} |R_{n,x}(v)| = \fasterthan(1)$, and that $h_x$ is unimodal and maximized at $v = x$, but $x$ may not be in $I_n$ for any $n$. Define $x_n^+ := \inf\{ v \in I_n : v \geq x\}$ and let $\epsilon > 0$. Then, $ \sup\argmax_{v \in I_n} \tilde{M}_{n,x}(v) \leq x - \epsilon$ implies that 
$h_x(x -\epsilon) + \sup_{v \in I_n : v < x} R_{n,x}(v) > h_x(x_n^+) + \inf_{v \in I_n: v \geq x} R_{n,x}(v)$, which in turn implies that
\[2\sup_{v \in I_n} |R_{n,x}(v)|+\int_{t}^{\Phi_0(x_n^+)} [\psi_0(v) - \psi_0(t)]dv\ >\ \int_{\Phi_0(x - \epsilon)}^{t} [\psi_0(t) - \psi_0(v)]dv\ .
\]
Since $\Phi_0$ and $\psi_0$ are differentiable with positive derivative at $x$ and $t$, respectively, for all $\epsilon > 0$, $\int_{\Phi_0(x - \epsilon)}^{t} [\psi_0(t) - \psi_0(v)]dv =: \delta_{\epsilon} > 0$. Additionally, by the boundedness of $\psi_0$, \[\int_{t}^{\Phi_0(x_n^+)} [\psi_0(v) - \psi_0(t)]dv\ \leq\  c\left[\Phi_0(x_n^+) - \Phi_0(x)\right]\] for some $c < \infty$. We claim that $x_n^+ \inprob x$. To see this, first note that $x_n^+ > x + \epsilon'$ implies that $\Phi_n(x) = \Phi_n(x + \epsilon')$. Hence, for all $0 \leq u \leq \delta \wedge \epsilon'$, we have that 
\[ [\Phi_n(x) - \Phi_0(x)] - [\Phi_n(x+u) -\Phi_0(x + u)] = \Phi_0(x + u) - \Phi_0(x) \geq c' u\]
for some $c' > 0$, again using that $\Phi_0$ is differentiable with positive derivative at $x$. This implies that 
$0 < (\delta \wedge \epsilon)c' \leq  2\sup_{|u| < \delta} |\Phi_n(x+u) - \Phi_0(x+u)|$, 
the probability of which goes to zero for any $\epsilon > 0$. Hence, $x_n^+ \inprob x$, and so, $\Phi_0(x_n^+) \inprob \Phi_0(x)$ by the Continuous Mapping Theorem. We have shown that
\[ P_0\left( \sup\argmax_{v \in I_n} \tilde{M}_{n,x}(v) -x  \leq - \epsilon \right) \leq P_0(\fasterthan(1) > \delta_\epsilon)\ ,\]
which goes to $0$ for each $\epsilon > 0$. The argument for the opposite tail probability is completely analogous, and hence $ \sup\argmax_{v \in I_n} \tilde{M}_{n,x}(v) \inprob x$.\qedhere

\end{proof}

\vspace{.1in}
\begin{customlemma}{4} Let $\{ V_n(u, f) : u \in \s{U}, f\in\s{F}\}$ be a sequence of stochastic processes indexed by $\s{U} \times \s{F}$, where $(\s{U}, d_1)$ and $(\s{F}, d_2)$ are semi-metric spaces. Let $\rho$ be the corresponding product semi-metric on $\s{U} \times \s{F}$. Suppose $V_n$ are asymptotically uniformly $\rho$-equicontinuous in the sense of VW and $d_2(f_n, f_0)$ tends to zero in probability. Then, $\sup_{u \in \s{U}} |V_n(u, f_n) - V_n(u, f_0)|$ tends to zero in probability.\end{customlemma}
\begin{proof}[\bfseries Proof of Lemma 4]
The result follows immediately upon noting that
\[
\left\{ \sup_{u \in \s{U}} | V_n(u, f_n) - V_n(u, f_0)| > \epsilon \right\} 
\subseteq \left\{ \sup_{\rho((u,f), (v, g)) < \delta} | V_n(u, f) - V_n(v, g)| > \epsilon \right\} \cup \left\{d_2(f_n, f_0) \geq \delta \right\}. \qedhere
\]
\end{proof}
 
 \vspace{.1in}
 
%

 \subsection*{Heuristic justification for the form of $D_{x,0}^*$ assumed in Theorem 5}
 
Denote by $L_2^0(P_0)$ the set of all functions from $\mathscr{O}$ to $\mathbb{R}$ with $P_0$-mean zero and finite $P_0$-variance. In a nonparametric model, the efficient influence function $M_{x,0}^*$ of $\Theta_0(x)$ is the unique element of $L_2^2(P_0)$  such that, for each $s_0\in L_2^0(P_0)$,
\[\left.\frac{\partial}{\partial \varepsilon} \Theta_{P_{\varepsilon}}(x) \right|_{\varepsilon=0} = \int_0^{x} \left.\frac{\partial}{\partial \varepsilon}\theta_{P_{\varepsilon}}(u) \right|_{\varepsilon=0} du = P_0(M_{x,0}^* s_0)\]
for each regular, one-dimensional parametric path $\{P_{\varepsilon}\}\subset \mathscr{M}$ through $P_0$ at $\varepsilon=0$ and with score $s_0$ at $\varepsilon=0$. In the presence of a transformation depending on $P_0$, the efficient influence function $L_{x,0}^*$ of $\Phi_0(x)$ similarly satisfies $\left.\frac{\partial}{\partial \varepsilon} \Phi_{P_{\varepsilon}}(x)\right|_{\varepsilon=0}=P_0(L_{x,0}^* s_0)$. The nonparametric efficient influence function $D_{x,0}^*$ of $\Gamma_0(x)$ thus satisfies
\[P_0(D_{x,0}^* s_0) = \left.\frac{\partial}{\partial \varepsilon} \Gamma_{P_{\varepsilon}}(x) \right|_{\varepsilon=0} = \int_0^x \left.\frac{\partial}{\partial \varepsilon}\theta_{P_{\varepsilon}}(v) \right|_{\varepsilon=0} \Phi_0(dv) + \int_0^x \theta_0(v) \left.\frac{\partial}{\partial \varepsilon}\Phi_{P_{\varepsilon}}(dv)\right|_{\varepsilon=0}  .\]
The first term typically contributes $o\mapsto I(u \leq x) M_{x,0}^{(1)}(o)\Phi_0'(u) + D_{x, 0}^{(2)}(o)$ to the form of $D^*_{x,0}$. $M_{x,0}^{(1)}$ in this term is deliberately the same as in the influence function of $\Theta_0(x)$.  The second term is equal to \[\int_0^x \theta_0(v)P_0\left(L^*_{dv,0}s_0\right)=P_0 \left[ s_0\int_0^x \theta_0(v) L_{dv,0}^*\right],\] so that this term contributes  $o\mapsto \int_0^x \theta_0(v) L_{dv,0}^*(o)$ to the form of $D^*_{x,0}$.
Hence, we get the general form
\[D_{x,0}^*(o) = I(u \leq x) D_{s,0}^{(1)}(o)\Phi_0'(u) + D_{x, 0}^{(2)}(o) + \int_0^x \theta_0(v) L_{dv,0}^*(o)\ .\qedhere\]

\subsection*{Applications of the general theory: additional details}


{\bfseries \textit{Monotone density function with independent censoring.}} We start by analyzing the estimator of a monotone density function with independent censoring. The local difference $g_{x,u}$ can be written as 
\begin{align*}(y,\delta)\mapsto 
 \frac{-[S_0(x + u) - S_0(x)] \delta I_{[0,x+u]}(y)}{S_0(y) G_0(y)}-\frac{S_0(x)\delta I_{(x,x+u]}(y)}{S_0(y) G_0(y)}+ \int_{v<y} \frac{I_{(x,x+u]}(v)}{S_0(v)G_0(v)} \Lambda_0(dv)\ .
 \end{align*}
The class of functions $\s{G}_{x,R}$ is a Lipschitz transformation of the classes $\{u\mapsto S_0(x + u) - S_0(x) : |u| \leq R\}$ and  $\{ t\mapsto I_{(x,x+u]}(t): |u| \leq R\}$, and hence satisfies (B1). An envelope function for $\s{G}_{x,R}$ is given by
\[G_{x,R}: (y,\delta)\mapsto \frac{\delta I_{[0,R]}(|y - x|)}{S_0(y) G_0(y)}(1 + KR) + \int_{x - R}^{x + R} \frac{I_{[0,y)}(v)}{S_0(v)G_0(v)}\Lambda_0(dv)\ .\]
It is easy to see that (B2) is satisfied if $S_0$ and $G_0$ are positive in a neighborhood of $x$. The covariance function is given by $\Sigma_0:(s,t)\mapsto\int_0^{s\wedge t} \frac{S_0(s)S_0(t)}{S_0(u)G_0(u) }\Lambda_0(du)$. Display (3) of the main text is thus satisfied with $A_0(s, t, v,w) = [S_0(t) S_0(s)] / [G_0(v) S_0(v)]$ and $H_0(v,w) = \Lambda_0(v)$. Condition (B3) is satisfied if $\theta_0$ is positive and continuous in a neighborhood of $x$. We then get $\kappa_0(x) = [S_0(x) / G_0(x)] \lambda_0(x) = f_0(x) / G_0(x)$, so that the scale parameter is $\tau_0(x) = [4 f_0'(x) f_0(x) / G_0(x)]^{1/3}$. This agrees with the results of \cite{huang1995right}.

It remains to scrutinize the conditions arising from the remainder term $H_{x,n}$. If $G_n$ is the Kaplan-Meier estimator of $G_0$, it is always true that $\d{P}_n D_{n,x}^* = 0$, where $D_{n,x}^*$ is the estimator of $D_{0,x}^*$ obtained by replacing $S_0$ and $G_0$ by $S_n$ and $G_n$, respectively. It is easy to verify that $H_{x,n}$ can be decomposed as $H^{(1)}_{x,n}+H^{(2)}_{x,n}$, where $H^{(1)}_{x,n}:=(\d{P}_n-P_0)(D^*_{n,x}-D^*_{0,x})$ is the usual empirical process term and \[H^{(2)}_{x,n}:=S_n(x) \int_0^x \frac{S_0(u)}{S_n(u)} \left[ \frac{ G_0(u)}{G_n (u)} - 1 \right] (\Lambda_n - \Lambda_0)(du)\] is the second-order remainder term. The local remainder emanating from $H^{(1)}_{x,n}$ can be studied using results from Section~4.3 of the main text. We instead focus on the local remainder $K^{(2)}_{n}(\delta) := n^{2/3} \sup_{|u| \leq \delta n^{-1/3}} | H_{n, x+u}^{(2)} - H_{n, x}^{(2)}|$, which can be bounded as
\begin{align*}
K^{(2)}_{n}(\delta)\ \leq&\ \ n^{2/3} \int_{0}^{x + \delta n^{-1/3}}\frac{S_0(u)}{S_n(u)} \left| \frac{ G_0(u)}{G_n (u)} - 1 \right| |(\Lambda_n - \Lambda_0)(du)|\sup_{|u| \leq \delta n^{-1/3}} | S_n(x +u) - S_n(x)|  \\
 &\ + n^{2/3}\int_{x-\delta n^{-1/3}}^{x + \delta n^{-1/3}} \frac{S_0(u)}{S_n(u)} \left| \frac{ G_0(u)}{G_n (u)} - 1 \right| |(\Lambda_n - \Lambda_0)(du)|\ .
 \end{align*}
Writing $| S_n(x +u) - S_n(x)| = n^{-1/2} \{n^{1/2}[S_n(x + u) - S_0(x+u)] -n^{1/2}[S_n(x ) - S_0(x)]\} + [S_0(x + u) - S_0(x)]$, we note that $ \sup_{|u| \leq \delta n^{-1/3}} | S_n(x +u) - S_n(x)| = \fasterthan(n^{-1/2}) + \bounded(n^{-1/3})$ in view of the weak convergence of $n^{1/2}(S_n - S_0)$ in a neighborhood of $x$. Using that $\int_a^b |f(u)| |g(du)| \leq \sup_{u \in [a,b]} |f(u)| \| g\|_{TV, [a,b]}$ with $\| \cdot\|_{TV, [a,b]}$ denoting the total variation norm over $[a,b]$, and that $\|\Lambda_n - \Lambda_0\|_{TV, [a,b]} \leq \|\Lambda_n\|_{TV, [a,b]} + \|\Lambda_0\|_{TV,[a,b]}  = [\Lambda_n(b ) - \Lambda_n(a)] + [\Lambda_0(b) - \Lambda_0(a)]$ in view of the monotonicity of $\Lambda_n$ and $\Lambda_0$, we find that
\begin{align*}
K^{(2)}_{n}(\delta)\ =\ \left[\fasterthan(n^{-1/6}) + \bounded(1)\right] \bounded(n^{-1/6})\ =\ \bounded(n^{-1/6})\ ,
\end{align*}
which is sufficient to establish conditions (B4) and (B5).

\vspace*{1em}

\noindent {\bfseries \textit{Monotone density function with conditionally independent censoring.}} We now turn to analysis of the proposed estimator of a monotone density function with conditionally independent censoring. Conditions (B1) and (B2) are satisfied under Lipschitz conditions on $S_0$ and $G_0$ uniformly over $w$. The asymptotic covariance function is given by
\[\Sigma_0(s,t) = \int S_0(t \mid w) S_0(s \mid w)Q_0(dw) - S_0(t) S_0(s) + \iint_0^{s\wedge t} \frac{S_0(t \mid w) S_0(s \mid w)}{G_0(y\mid w) S_0(y \mid w)} \Lambda_0(dy \mid w) Q_0(dw)\ ,\] and so, we find that display (3) holds with $\Sigma_0^*(s,t)=\int S_0(t\mid w)S_0(s\mid w)Q_0(dw)-S_0(t)S_0(s)$, $A_0(s,t, v, w) = [S_0(t \mid w) S_0(s \mid w)]/[G_0(v\mid w) S_0(v \mid w)]$ and $H_0(v, w) = \Lambda_0(v \mid w)$. Thus, condition (B3) holds, and we get $\kappa_0(x) = \int [f_0(x \mid w)/G_0(x \mid w)]Q_0(dw)$, where $f_0(x \mid w)$ is the conditional density of $T$ at $x$ given $W = w$.

The remainder term $H_{x,n}$ again has the form $H^{(1)}_{x,n}+H^{(2)}_{x,n}$ with $H^{(1)}_{x,n}:=(\d{P}_n-P_0)(D^*_{n,x}-D^*_{0,x})$ and \[H^{(2)}_{x,n} := \int S_n(x \mid w) \int_0^x \frac{S_0(y \mid w)}{S_n(y \mid w)} \left[ \frac{G_0(y \mid w)}{G_n(y\mid w)} - 1\right] (\Lambda_n - \Lambda_0)(dy \mid w) Q_0(dw)\ .
\] Once more, we focus on $H^{(2)}_{x,n}$. 
 Writing $S_n^{(0)} := S_n - S_0$, if $S_n$ and $G_n$ are bounded away from zero in a neighborhood of $x$ with probability tending to one, and if $S_0$ is Lipschitz in $x$ uniformly in $w$, then the term $K_{n}^{(2)}(\delta) := n^{2/3}\sup_{|u| \leq \delta n^{-1/3}}| H^{(2)}_{x + u,n} - H^{(2)}_{x,n}|$ is bounded by a constant multiple of 
 \[n^{2/3}\left\{ \left[ E_{0}\sup_{|u-x| \leq \epsilon}|S_n^{(0)}(u \mid W) -S_n^{(0)}(x \mid W) |^2 \right]^{1/2} + \delta n^{-1/3} \right\}\left[E_{0}\sup_{u \leq x + \epsilon}| G_n(u \mid W) - G_0(u \mid W)|^2 \right]^{1/2}\] with probability tending to one.
Control of $K_{n}^{(2)}(\delta) $ is highly dependent on the behavior of  $S_n$ and $G_n$. If, for instance,  $S_n - S_0$ and $G_n - G_0$ uniformly tend to zero in probability at rates faster than $n^{-1/3}$, then conditions (B4) and (B5) are satisfied. For estimators of the form $S_n(x \mid w) =  \exp\left[-\int_0^x  \lambda_n(u \mid w) du\right]$ with $\lambda_n$ an estimator of the conditional hazard $\lambda_0$, we find that $K_{n}^{(2)}(\delta) $ is bounded by a constant multiple of 
 \[ \delta n^{1/3} \left[E_{0} \sup_{u \leq x+ \epsilon} |\lambda_n(u \mid W) - \lambda_0(u \mid W)|^2E_{0} \sup_{u \leq x+ \epsilon} |G_n(u \mid W) - G_0(u \mid W)|^2\right]^{1/2}\] with probability tending to one, and so, we require that the product of the convergence rates of $\lambda_n-\lambda_0$ and $G_n-G_0$ to be faster than $n^{-1/3}$.
 
 \vspace*{1em}
 
\noindent {\bfseries \textit{Monotone regression function with no confounding.}} We now analyze the asymptotic distribution of the isotonic regression estimator. We find the localized difference function to be $g_{x,u}: (a,y)\mapsto [y - \theta_0(x)] I_{(x,x+u]}(a) - [\Gamma_0(x+u) - \Gamma_0(x)] + \theta_0(x) [\Phi_0(x+u) - \Phi_0(x)]$. The second and third summands are constant as functions of $(a,y)$ and Lipschitz in $u$ with a constant envelope function. Hence, they easily satisfy conditions (B1) and (B2). The first summand is the fixed function $(a,y)\mapsto [y - \theta_0(x)]$ multiplied by an element of the class $\{v\mapsto I_{(x,x+u]}(v):u>0\}$. This class has been studied for the Grenander estimator of a monotone density function; in particular, it is known to possess polynomial covering numbers. The natural envelope for the class generated by the first summand is thus $(a,y)\mapsto |y - \theta_0(x)| I_{[0,R]}(|a - x|)$, which satisfies (B1) and (B2) if, in a neighborhood of $x$, the conditional variance function, defined pointwise as $\sigma_0^2(t) := \n{Var}_0(Y \mid A = t)$, is bounded  and $\Phi_0$ possesses a positive, continuous density. In such cases, Theorem 4 holds. Through straightforward calculations, we find that 
\[ \Sigma_0(s,t) = -\left[\Gamma_0(s) - \theta_0(x) \Phi_0(s)\right]\left[\Gamma_0(t) - \theta_0(x) \Phi_0(t)\right] + E_{0} \left\{1_{(-\infty,s\wedge t]}(A) \left[Y -\theta_0(x)\right]^2\right\}\ . \]
The first summand is continuously differentiable at $(x,x)$ since each of $\theta_0$ and $\Phi_0$ are continuously differentiable at $x$. The second summand can be expressed as $\int_{-\infty}^{s\wedge t} \{\sigma_0^2(u) + \left[\theta_0(u) - \theta_0(x)\right]^2\} \Phi_0(du)$. We thus confirm that display (3) holds with $A_0(s,t,v,w) = \sigma_0^2(v) + [\theta_0(v) - \theta_0(x)]^2$ and $H_0 = \Phi_0$.  Provided $\sigma_0^2$ is continuous at $x$ and $\Phi_0$ is continuously differentiable at $x$, condition (B3) holds. As such, we obtain that $n^{1/3}\left[\theta_n(x) - \theta_0(x)\right]$ has a scaled Chernoff distribution with scale parameter
\[ \tau_0(x)=\left[\frac{4 \mu_0'(x) \sigma_0^2(x)}{f_0(x)} \right]^{1/3}\] coinciding with the classical results of \cite{brunk1970regression}.
 
 \vspace*{1em}
 
\noindent {\bfseries \textit{Monotone regression function with confounding by recorded covariates.}} Finally, we turn to an analysis of the proposed estimator of a monotone covariate-adjusted dose-response function.  For condition (B4), we focus as before on the second-order remainder term $H_{x,n}^{(2)}$ given by 
\[ \iint_{-\infty}^x \left[\mu_n(u, w) - \mu_0(u, w)\right] \left[ \frac{g_n(u, w)}{g_0(u,w)} - 1\right] \Phi_0(du)Q_0(dw) - \iint_{-\infty}^x \mu_n(u, w) \, (\Phi_n - \Phi_0)(du) (Q_n - Q_0)(dw)\ .\]
 The contribution to $K_n(\delta)$ of the first summand above is bounded above by
\begin{align*}
2\delta n^{1/3} \sup_{|x - u| \leq \delta n^{-1/3}} \left[f_0(u) \left\{ E_{0}\left[ \mu_n(u, W) - \mu_0(u, W)\right]^2  E_{0}\left[ \frac{g_0(u, W)}{g_n(u,W)} - 1\right]^2 \right\}^{1/2}\right].
\end{align*}
which implies that condition (B4) is satisfied if, for some $\epsilon > 0$,
\[ \sup_{|x - u| \leq \epsilon}E_{0}\left[ \mu_n(u, W) - \mu_0(u, W)\right]^2  \sup_{|x - u| \leq \epsilon} E_{0}\left[ \frac{g_0(u, W)}{g_n(u,W)} - 1\right]^2 = \fasterthan\left(n^{-1/3}\right).\] The contribution of the second summand to $K_n(\delta)$ can easily be controlled using empirical process theory. To scrutinize condition (B5), the relevant portion of the covariance function $\Sigma_0(s,t)$ is given by  
\[ \iint_{0}^{s \wedge t} \left\{ \frac{\sigma_0^2(a,w)}{g_0(a,w)} + [\theta_0(a) - \theta_0(x)]^2 \right\} \Phi_0(da) Q_0(dw)\ ,\]
where $\sigma_0^2:(a,w)\mapsto \n{Var}_0(Y \mid A = a, W =w)$ denotes the conditional variance function of $Y$ given $A$ and $W$. Under certain smoothness conditions, we have that $\kappa_0(x) = f_0(x)^2\int \left[ \sigma_0^2(x,w)/f_0(x \mid w)\right]Q_0(dw)$, from which we find that the scale parameter of the limit Chernoff distribution to be 
\[ \tau_0(x) = \left\{ 4 \nu_0'(x)\int\left[ \frac{\sigma_0^2(x, W)}{f_0(x \mid W)}\right]Q_0(dw) \right\}^{1/3}.\]

\end{document}